\documentclass[a4paper,11pt]{amsart}
\usepackage[T1]{fontenc}
\usepackage{lmodern}
\usepackage[a4paper,margin=28mm]{geometry}
\usepackage{amsmath,amssymb,amsthm,amscd,mathtools,mathrsfs}
\usepackage{enumitem,comment,graphicx,microtype}
\usepackage[all,cmtip]{xy}
\usepackage{tikz-cd}
\usepackage[dvipsnames]{xcolor}
\usepackage{hyperref}
\usepackage{bookmark}

\definecolor{LinkColor}{HTML}{007091} 

\hypersetup{colorlinks=true,linkcolor=LinkColor,citecolor=LinkColor,
 urlcolor=LinkColor,pdfdisplaydoctitle=true,
 pdftitle={Two proofs of the Cassels--Swinnerton-Dyer conjecture for cubic surfaces},
 pdfauthor={Valery Alexeev and Stefan Schreieder}}
\numberwithin{equation}{section}
\newtheorem{theorem}{Theorem}[section]
\newtheorem{lemma}[theorem]{Lemma}
\newtheorem{proposition}[theorem]{Proposition}
\newtheorem{corollary}[theorem]{Corollary}

\theoremstyle{definition}

\theoremstyle{remark}
\newtheorem{remark}[theorem]{Remark}

\newif\ifeditorial
\editorialtrue

\allowdisplaybreaks[1]
\newcommand{\PP}{\mathbb P}
\newcommand{\A}{\mathbb A}

\newcommand{\CP}{\PP}
\newcommand{\OO}{\mathcal O}

\newcommand{\Spec}{\operatorname{Spec}}

\newcommand{\Gal}{\operatorname{Gal}}

\newcommand{\Gr}{\operatorname{Gr}}
\newcommand{\CH}{\operatorname{CH}}

\newcommand{\Frac}{\operatorname{Frac}}

\DeclareMathOperator{\Pf}{Pf}
\DeclareMathOperator{\Ann}{Ann}
\DeclareMathOperator{\diag}{diag}

\DeclareMathOperator{\Tr}{Tr}
\newcommand{\Aff}{\A}

\newcommand{\cI}{\mathcal I}
\newcommand{\cT}{\mathcal T}
\newcommand{\bk}{\overline{k}}

\newcommand{\dashedlongrightarrow}{\xymatrix@1@=15pt{\ar@{-->}[r]&}}
\newcommand{\hooklongrightarrow}{\xymatrix@1@=15pt{\ar@{^(->}[r]&}}
\newcommand{\congpf}{\xymatrix@1@=15pt{\ar[r]^-\sim&}}

\title[The Cassels--Swinnerton-Dyer conjecture]{Two proofs of the Cassels--Swinnerton-Dyer conjecture for cubic surfaces}

\author{Valery Alexeev}
\address{Department of Mathematics, University of Georgia,
Athens, GA 30602, USA}
\email{valery@math.uga.edu}
\author{Stefan Schreieder}
\address{Leibniz University Hannover, Institute of Algebraic Geometry,
Welfengarten 1, 30167 Hannover, Germany}
\email{schreieder@math.uni-hannover.de}

\date{\today}
\subjclass[2020]{Primary 14G05; Secondary 14C25, 14J26}
\keywords{Cubic surfaces, rational points, zero-cycles, nets of quadrics,
Pfaffian presentations, Cassels--Swinnerton-Dyer conjecture}

\begin{document}
\begin{abstract}
The Cassels--Swinnerton-Dyer conjecture asserts that a cubic hypersurface contains a rational point if and only if it contains a point of degree coprime to $3$, or, equivalently, a zero-cycle of degree $1$.
The case of smooth cubic surfaces in characteristic zero has been reduced by Coray (1976) and Voisin (2026) to the case of points of degree $4$.
We give two independent proofs of this missing case and use a lifting argument 
to extend the result to smooth cubic surfaces over arbitrary fields.
We further give a 
separate 
argument for the case of singular cubic surfaces, extending previous work of Coray over perfect fields.
Altogether, this proves the Cassels--Swinnerton-Dyer conjecture for cubic surfaces.
\end{abstract}

\maketitle

\setcounter{tocdepth}{1}
\tableofcontents

\section{Introduction}\label{sec:introduction}

Cassels and Swinnerton-Dyer conjectured that a cubic surface with a point of degree coprime to $3$, or, equivalently, a zero-cycle of degree one, has a rational point; see also the related problems of Segre \cite[p.~2]{segre}. 
The analogous criteria for quadrics and plane cubic curves are due to Springer \cite{springer} and Poincar\'e \cite{poincare}; see \cite[Proposition~2.3]{coray} for a modern treatment of the latter.

Coray formulated the Cassels--Swinnerton-Dyer conjecture for cubics of arbitrary dimension \cite[Conjecture~(CS)]{coray}.
Over perfect fields, he proved the singular surface case \cite[Proposition~8.2]{coray} and reduced the case of smooth cubic surfaces to points of degrees $4$ and $10$ \cite[Theorem~7.1]{coray}.
Further developments and related results appeared in \cite{CoraySingular,CT-coray,CT-SD,BN,CT-zero-cycles,ma,rivera-viray,balestrieri,porzio-thesis}. 

In a recent breakthrough, Voisin \cite{voisin-coray} eliminated
the degree-$10$ case in Coray's reduction: she showed that,
in characteristic zero, a smooth cubic surface with a point
of degree $10$ has a rational point or a point of degree $4$.
This reduces the conjecture for smooth cubic surfaces in
characteristic zero to the following statement, which we
prove in this paper.

\begin{theorem}\label{thm:deg=4-intro}
Let $S$ be a smooth cubic surface over a field $k$ of characteristic zero.
If $S$ has a closed point of degree $4$, then it has a rational point.
\end{theorem}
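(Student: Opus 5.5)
Let $P\in S$ be a closed point of degree $4$, with residue field $L=k(P)$, $[L:k]=4$. I would first reduce to the case where the four geometric points of $P$ are in general position. If $P_{\bar k}$ spans only a plane or a line, then $P$ lies on a $k$-rational plane $H$. The curve $C=S\cap H$ is then a plane cubic curve with a point of degree $4$. If $C$ is smooth, the classical argument gives a rational point: $4$ is not divisible by $3$, so $C$ carries a divisor class of degree $1$, and Riemann--Roch then produces a rational point (Poincar\'e; see \cite[Proposition~2.3]{coray}). If $C$ is singular, it either has a rational singular point or contains a rational line. It might instead have three conjugate lines, but then it has a rational point, namely a vertex of the triangle, or else a $k$-point where the lines meet. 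In every case we get a rational point. So we may assume that the four points span $\PP^3$. Moreover, if any three of them were collinear, that line would lie on $S$ (it meets $S$ in at least three points), and a Galois-orbit argument on lines then yields a rational point or a rational plane section, which the previous step handles.

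The main construction is as follows. The four points in general position span $\PP^3$, and quadrics through them form a $6$-dimensional linear system. Consider the restriction map from quadrics through $P$ to the conic bundle structure, or more concretely the residual intersection. I would instead use the net of quadrics through $P$ together with one extra datum, arranged so that the residual intersection with $S$ produces a $0$-cycle of degree coprime to $3$ that is \emph{rational}. Concretely, take the unique twisted cubics $\Gamma$ through $P$ that are defined over $k$. Twisted cubics through $4$ general points form a $4$-dimensional family (twisted cubics form a $12$-dimensional family, and passing through a point imposes $2$ conditions). Then $\Gamma\cdot S$ has degree $9$, so the residual cycle $\Gamma\cdot S - P$ has degree $5$.

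This is not yet decisive, so I would combine the construction with Coray's descent. Residual degrees $5$ and $2$ feed into Coray's reduction \cite[Theorem~7.1]{coray}, which says that a point of degree $d$ prime to $3$ yields a point of smaller degree prime to $3$, except when $d\in\{4,10\}$. The plan is therefore to find a rational curve of low degree through $P$ whose residual intersection has degree $1$, $2$ or $5$. Degree $2$ only helps if the point is irreducible of degree $2$, and then the line through it meets $S$ in a third rational point. The natural choice is a twisted cubic through $P$ that is also tangent to $S$, or that passes through a suitable singular configuration, to absorb excess intersection. Alternatively, take the pencil of quadrics through $P$ and a rational auxiliary point of $\PP^3$; the residual intersection of a quadric $Q$ with $S$ is a degree-$6$ curve containing $P$, and one picks $Q$ so that the residual curve splits off a rational component.

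The main obstacle is making the residual cycle genuinely of degree coprime to $3$ and smaller than $4$ without accidentally reproducing $P$ itself or its conjugates. For this I would try an explicit normal-form approach. Since the four points are in general position, after a $k$-linear change of coordinates $P$ is the orbit of the standard point $(1:\alpha:\alpha^2:\alpha^3)$ with $\alpha$ generating $L$, so $P$ lies on a $k$-rational twisted cubic $\Gamma_0$. The residual $\Gamma_0\cdot S - P$ is a rational $0$-cycle of degree $5$. It is effective, and the geometry of the $4$-dimensional family of such rational twisted cubics through $P$ lets one force it to contain a point of degree $\le 2$ or to be reducible with a part of degree $1$, $2$ or $5$. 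Then Coray's descent from degree $5$ (to degree $2$, hence to a rational point) finishes the argument. Carrying out this forcing rigorously is the step I expect to be hardest.
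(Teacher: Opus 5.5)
\textbf{There is a genuine gap: the proposal stalls exactly where the real work begins.} Your first steps are fine. You reduce to four geometric points spanning $\PP^3$, and you build the $k$-rational twisted cubic $\Gamma_0\supset P$ by matching a $k$-basis of $L$ with $1,\alpha,\alpha^2,\alpha^3$. This is Cassels' step, Proposition~\ref{prop:four-five}, and the paper's first proof starts the same way.

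Everything after that does not work.

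\begin{itemize}
\item \emph{Coray's descent does not take degree $5$ to degree $2$.} The step available from a degree-$5$ point is this same twisted-cubic residual, which returns degree $9-5=4$. That is why degrees $4$ and $5$ are interchangeable, and why degree $4$ is one of the two cases (with $10$) that Coray's reduction leaves open. Your final step is therefore circular.
\item \emph{The $4$-dimensional family of twisted cubics through $P$ gives no leverage on its own.} If $S(k)=\varnothing$, Lemma~\ref{lem:secant} rules out closed points of degree $1$ or $2$. So \emph{every} effective residual cycle of degree $5$ is a single closed point of degree $5$, whichever $\Gamma$ you choose. ``Forcing'' it to contain a point of degree $\le 2$ is a restatement of the theorem, not a reduction.
\item \emph{Minor, in the plane-section case.} Three conjugate lines forming a triangle need not have a rational vertex, since the vertices can be permuted cyclically. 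That case is excluded only because a Galois orbit of four points can neither be distributed over three conjugate lines nor placed at three vertices.
\end{itemize}

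What is missing is a genuinely new mechanism that produces a rational point from a degree-$4$ or degree-$5$ point. The paper gives two.

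(i) Start from your degree-$5$ point in general position. Buchsbaum--Eisenbud and Beauville give $F=\Pf(\sum x_iA_i)$ over $k$, with the $A_i$ alternating $6\times6$ matrices. For generic $(A_0,A_1,A_2)$ there is a common isotropic $3$-space $W$ over an extension of degree $\le2$. A radical vector $v$ of $A_3|_W$ puts $A_0v,\dots,A_3v$ in the $3$-dimensional space $\Ann(W)$. Hence some $\sum c_iA_i$ is degenerate, and $S$ has a point of degree $\le 2$. A specialization argument handles non-generic triples.

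(ii) Your passing idea of a twisted cubic through $P$ tangent to $S$ is the germ of the second proof, but it needs two things you do not supply.
\begin{itemize}
\item \emph{Choosing the tangent direction.} A generic $k$-rational choice of tangent direction along $P$ imposes $4$ conditions on the $6$-dimensional space of quadrics through $P$. A dimension count plus trace duality for $L/k$ is needed to choose the direction so that the resulting length-$8$ scheme $Z$ lies on at least a net of quadrics.
\item \emph{The dichotomy for the base locus.} If the base locus is finite, $Z$ is a complete intersection. The Koszul resolution then gives $F=H_1Q_1+H_2Q_2+H_3Q_3$, so $\{H_1=H_2=H_3=0\}$ is a $k$-point of $S$. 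Otherwise the base locus is a twisted cubic $\Gamma\cong\PP^1_k$ containing $Z$. Either $\Gamma\subset S$, or the residual of $Z$ in $\Gamma\cap S$ has degree $9-8=1$. Degenerate tangent configurations need a separate argument.
\end{itemize}
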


In Section \ref{sec:specialization}, we combine this result and the aforementioned work of Coray and Voisin with a lifting argument of Ma \cite{ma} to deal with smooth cubic surfaces over arbitrary fields.
Finally, in Section \ref{sec:arbitrary}, we handle singular cubic surfaces over arbitrary fields by reducing to smooth cubic surfaces over $k((t))$, extending Coray's result over perfect fields.
Together, these arguments give the following solution of the Cassels--Swinnerton-Dyer conjecture for surfaces; see Theorem \ref{thm:main} below for a geometric formulation.

\begin{theorem}\label{thm:main:intro}
Let $F\in k[x_0,x_1,x_2,x_3]$ be a nonzero
homogeneous cubic form over a field $k$.
Then $F$ represents zero nontrivially over $k$ if and only if it represents zero nontrivially over a finite field extension $L/k$ of degree coprime to $3$. 
\end{theorem}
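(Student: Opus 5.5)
The "only if" direction is trivial: take $L=k$. For the converse, fix $L/k$ finite of degree coprime to $3$ with a nontrivial zero of $F$ over $L$. Let $X=V(F)\subset\PP^3_k$. Then $X$ has a closed point whose residue field embeds in $L$, so its degree is coprime to $3$, and we must produce a $k$-point. I would split into cases according to the geometry of $X$ over $\bk$.

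\emph{Degenerate cases.} Suppose $F$ is reducible over $k$. Then it has a linear factor over $k$, which gives a plane with plenty of rational points. Suppose instead $F$ is irreducible over $k$ but reducible over $\bk$. Then $X_{\bk}$ is a union of three conjugate planes, and either there is a $k$-rational singular point, or the planes meet in a Galois-stable line, or their common point is rational. In each case I would check directly that a $k$-point exists; if it does not, the planes are permuted transitively by a Galois group whose image in $S_3$ has order divisible by $3$, which is incompatible with a point of degree prime to $3$. Cones over plane cubics reduce to the plane cubic case (Poincar\'e/Coray, \cite[Proposition~2.3]{coray}), or have a rational vertex. Non-normal cubic surfaces have a Galois-stable singular line or conic, and they are handled by a similar direct argument. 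The genuinely singular normal case, in characteristic $p$, is handled as announced: first move to $k((t))$, which preserves the degree of the field extension. Deform $F$ to $F+tG$ with $G$ generic, so as to obtain a smooth cubic surface over $k((t))$ that still has a point of degree prime to $3$. The step I expect to need care here is lifting the point of degree coprime to $3$ to the deformation; I would choose $G$ vanishing on that point. Once the smooth case gives a $k((t))$-point, I would specialize it to the closed fibre and get a $k$-point of $X$, using properness of $\PP^3$.

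\emph{Smooth case.} First take $\operatorname{char}k=0$. Coray's reduction \cite[Theorem~7.1]{coray} reduces to points of degree $1$, $4$ or $10$. Voisin's result eliminates degree $10$ in favour of degree $4$ or $1$. Theorem~\ref{thm:deg=4-intro} then handles degree $4$. For positive characteristic I would use Ma's lifting argument \cite{ma}. Lift $k$ to a characteristic-zero complete discretely valued field $K$ with residue field $k$ (a Cohen ring, extended to handle imperfect $k$). Lift $X$ to a smooth cubic $\mathcal X$ over its ring of integers, and lift the separable part of the degree-prime-to-$3$ point via an unramified extension and Hensel's lemma. The characteristic-zero theorem then gives a $K$-point, which reduces to a $k$-point.

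The main obstacle is controlling inseparability. A point of degree prime to $3$ may have a purely inseparable residue field in characteristic $2$, and Hensel's lemma then fails. I would first try to replace the point with a separable one of degree prime to $3$, by taking a residual intersection with lines or conics through it, before applying the lift.
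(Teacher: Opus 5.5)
Your smooth characteristic-zero step (Coray's reduction, then Voisin, then Theorem~\ref{thm:deg=4-intro}) is the paper's. Both reductions around it, however, have genuine gaps.

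\textbf{The singular case.} Choosing $G$ to vanish on the point $P$ fails in general. Nothing in your argument bounds $d=\deg P$, since Coray's degree reduction needs $X$ smooth. Moreover, $H^0(X,\OO_X(3))$ has dimension $19$. Once the geometric points of $P$ impose $19$ independent conditions on it, every cubic through $P$ is a multiple of $F$. Then $F+tG$ is a unit multiple of $F$, and the generic fibre is just as singular as $X$.

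The working version keeps $G$ an arbitrary smooth cubic (the paper uses the pencil $(1-t)F+tG$) and lifts the point to $L[[t]]$ by Hensel's lemma. This requires a \emph{smooth} $L$-point of $X$, and that is your missing ingredient, Lemma~\ref{lem:singular-point}: an $L$-point $p$ of $X$ with $3\nmid[L:k]$ yields a $k$-point or a smooth $L$-point.
\begin{itemize}
\item If $p$ is a double point, write $F=x_0Q+C$ with $p=[1:0:0:0]$ and take $q=[-C(a)/Q(a):a_1:a_2:a_3]$ with $Q(a)\neq0$. Then $\partial F/\partial x_0(q)=Q(a)\neq0$.
\item If $p$ is a triple point, $F$ is invariant under translation by a vector $v$ representing $p$, so $D_vF=0$. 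For $\operatorname{char}k\neq3$, the $k$-linear map $w\mapsto D_wF$ then has a nonzero kernel vector $w\in k^4$, and Euler's identity gives $3F(w)=(D_wF)(w)=0$.
\item For $\operatorname{char}k=3$, $L/k$ is separable, and $w=\Tr_{L/k}(v)$ is a nonzero $k$-rational translation direction, so $F(w)=F(0)=0$.
\end{itemize}
This lemma also makes your case analysis of degenerate surfaces unnecessary. That analysis is not exhaustive in characteristic $3$. An irreducible $F$ may become $\ell^3$ over $\bk$ (e.g.\ $F=\sum a_ix_i^3$) rather than three distinct planes. A cone may have its vertex defined only over a purely inseparable extension (e.g.\ $F=x_1^3-ax_0^3+H(x_2,x_3)$ with $a\notin k^3$); such a cone neither has a rational vertex nor is a cone over a plane cubic defined over $k$.

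\textbf{Inseparable points.} In characteristic $p\neq3$ you flag, but do not resolve, the case of an inseparable residue field. Replacing $P$ by a separable point via residual intersections with lines or conics is not an argument, and it need not be possible. For instance, in characteristic $2$ a degree-$4$ point with $\kappa(P)=k(a^{1/2},b^{1/2})$ not lying on a $k$-plane lies on no $k$-line or $k$-conic, and $\kappa(P)/k$ has no primitive element. The same problem reappears in your $k((t))$ step, since $L((t))/k((t))$ is inseparable whenever $L/k$ is.

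The paper avoids this entirely. Take a Cohen ring $R$ with residue field exactly $k$; no enlargement is needed for imperfect $k$. The lifted cubic $\mathscr S/R$ is smooth, so by Fogarty's theorem $\operatorname{Hilb}^d(\mathscr S/R)$ is smooth over $R$, and the $k$-point $[P]$ lifts to an $R$-point. The resulting finite flat $\mathscr Z\subset\mathscr S$ of degree $d$ gives a zero-cycle of degree $d$ on $\mathscr S_K$, regardless of whether $\kappa(P)/k$ is separable, and the characteristic-zero theorem then applies.
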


By results of Segre, Manin \cite{manin} and Koll\'ar \cite{kollar}, smooth cubic surfaces with a rational point are unirational.
We therefore obtain the following corollary.

\begin{corollary} \label{cor:unirational}
Let $S$ be a smooth cubic surface over a field $k$.
Then $S$ is unirational if and only if it contains a zero-cycle of degree $1$.
\end{corollary}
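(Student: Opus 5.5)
The plan is to deduce both directions of the corollary from Theorem~\ref{thm:main:intro} and the unirationality results of Segre, Manin and Koll\'ar quoted just before the statement, together with a short argument that unirational cubic surfaces have a rational point. Throughout, write $S=\{F=0\}\subset \PP^3_k$ with $F$ a cubic form.

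\emph{Zero-cycle of degree $1$ $\Rightarrow$ unirational.} Suppose $S$ carries a zero-cycle of degree $1$. Its degree is an integral combination of degrees of closed points of $S$, so the gcd of these degrees is $1$. In particular some closed point $P\in S$ has degree not divisible by $3$. The residue field $L=\kappa(P)$ is then a finite extension of $k$ with $[L:k]$ coprime to $3$. The coordinates of $P$ give a nontrivial zero of $F$ over $L$. By Theorem~\ref{thm:main:intro}, $F$ has a nontrivial zero over $k$, that is, $S(k)\neq\emptyset$. Since $S$ is smooth, the theorem of Segre, Manin and Koll\'ar then shows that $S$ is unirational. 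Here it matters that Koll\'ar's version \cite{kollar} holds over arbitrary fields, including finite ones, where Manin's argument needed extra hypotheses; I expect this attribution to be the only delicate point.

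\emph{Unirational $\Rightarrow$ zero-cycle of degree $1$.} Let $\varphi\colon \PP^n_k \dashrightarrow S$ be a dominant rational map defined over $k$. It suffices to find a rational point on $S$, since that is a zero-cycle of degree $1$. If $k$ is infinite, the domain of definition $U\subset\PP^n_k$ is a nonempty open subset, so $U(k)\neq\emptyset$. The image under $\varphi$ of any point of $U(k)$ lies in $S(k)$. If $k$ is finite, the Chevalley--Warning theorem applies directly: a cubic form in $4>3$ variables has a nontrivial zero. Hence $S(k)\neq\emptyset$ in this case as well, without using unirationality.

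No step here is a genuine obstacle once Theorem~\ref{thm:main:intro} is available. The only care needed is the finite-field case in both directions, which is handled by Chevalley--Warning and by Koll\'ar's field-independent unirationality theorem.
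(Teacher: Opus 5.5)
Your proposal is correct and follows the paper's argument: Theorem~\ref{thm:main:intro} turns a closed point of degree prime to $3$ into a rational point, and the Segre--Manin--Koll\'ar theorem (which Koll\'ar proved over arbitrary fields) then gives unirationality. The paper leaves the converse direction implicit; your argument fills it in correctly, using a $k$-point of the domain of definition when $k$ is infinite and Chevalley--Warning when $k$ is finite.
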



Another application of Theorem \ref{thm:main:intro} to the Hasse principle for cubic hypersurfaces is given by Colliot-Thélène and Wittenberg in \cite{CTW}.

In view of \cite{coray,ma,voisin-coray}, the key new ingredient in the proof of Theorem \ref{thm:main:intro} is Theorem \ref{thm:deg=4-intro} above.
We give two independent proofs of the latter statement. 
The first proof, presented in Section \ref{sec:degree-five}, uses a reduction of Cassels (see Proposition \ref{prop:four-five} below) to pass from a point of degree $4$ to an effective zero-cycle of degree $5$.
Using work of Buchsbaum--Eisenbud \cite{BE} and Beauville \cite{Beauville}, we show that the surface either has a rational point or admits a linear \(6\times6\) Pfaffian presentation over the ground field. 
The following result then completes the argument, where 
we recall that the Pfaffian $\operatorname{Pf}(A)$ of an alternating
$2n\times 2n$ matrix $A$ is a homogeneous polynomial of degree $n$
in its entries satisfying $\operatorname{Pf}(A)^2=\det(A)$.

\begin{theorem}\label{thm:pfaffian-intro}
Let $k$ be a field of characteristic different from $2$.
Let $A=(a_{ij})$ be an alternating $6\times 6$ matrix of linear forms $a_{ij}\in k[x_0,\dots ,x_3]$ in four variables.
Then the cubic form $\operatorname{Pf} (A)$ given by the Pfaffian of $A$ represents zero nontrivially over $k$.
%
\end{theorem}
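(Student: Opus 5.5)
We need a point $x\in k^4\setminus\{0\}$ with $\operatorname{Pf}(A(x))=0$. Since $\operatorname{Pf}(A(x))^2=\det A(x)$, this is the same as asking that the alternating form $A(x)$ on $V=k^6$ be degenerate, i.e.\ have rank at most $4$. The plan is to produce such an $x$ by linear algebra over $k$. The underlying idea is that the Pfaffian cubic in $\PP^{14}=\PP(\Lambda^2 V^\vee)$ contains many $k$-rational linear subspaces. Our $\PP^3$ must meet one of them, or be forced to meet the Grassmannian-type loci in a rational point.

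\textbf{Step 1 (basic linear subspaces).} Any alternating form that vanishes identically on a $3$-dimensional subspace $W\subset V$ is degenerate. To see this, put $A$ in block form with respect to $V=W\oplus W'$. The $W\times W$ block is zero, so $A=\begin{pmatrix}0&B\\-B^T&C\end{pmatrix}$, and then $\operatorname{Pf}(A)=\pm\det B$. That is not automatically zero, so this alone does not suffice.

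A better source of zeros: forms that kill a fixed vector $v\in V$, i.e.\ $A(x)v=0$. For fixed $v$, the condition $A(x)v=0$ is $6$ linear conditions on $x\in k^4$, which is too many. So instead I combine the conditions with an auxiliary choice of $v$.

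\textbf{Step 2 (incidence and counting).} Consider the incidence
\[
Z=\{(x,v)\in\PP^3\times\PP^5 : A(x)v=0\}.
\]
It is cut out by $6$ bilinear equations, and its projection to $\PP^3$ is the cubic surface $S=\{\operatorname{Pf}=0\}$. On the smooth part of $S$ the rank is exactly $4$, so $\ker A(x)$ is a $2$-plane and $Z\to S$ is a $\PP^1$-bundle there. The class of $Z$ in $\PP^3\times\PP^5$ is $(h+H)^6$ restricted suitably. Projecting $Z$ to $\PP^5$ gives a map whose image $Y$ is defined over $k$.

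The key step is to find a $k$-point on $Z$. For this I look at $v$ ranging over a $k$-rational line or plane $L\subset\PP^5$ and solve for $x$. For a fixed $k$-plane $\Pi=\PP(U)$ with $\dim U=3$, the condition $\exists\, v\in U$ with $A(x)v=0$ defines a $6\times 3$ matrix $M(x)$ of linear forms, whose rank must drop. I would instead use the degree argument: the fibers of $Z\to\PP^5$ over a general point are cut by $6$ linear equations in $4$ unknowns, hence empty. The image $Y$ is therefore a threefold in $\PP^5$ (generically a $\PP^1$-bundle over $S$ maps birationally), and it is the degeneracy locus of a $6\times4$ matrix of linear forms in $v$, namely $v\mapsto (x\mapsto A(x)v)$. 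This locus has degree $\binom{6}{3}=20$. Intersecting $Y$ with a general $k$-rational plane $\PP^2\subset\PP^5$ gives a $0$-cycle of degree $20$. That degree is not coprime to $3$, so this route is risky.

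\textbf{Step 3 (alternative: odd degree/Springer).} A more robust route uses characteristic $\neq 2$. The alternating form $A(x)v$ paired with $w$ gives the trilinear form $T(x,v,w)=w^TA(x)v$. For a $k$-rational $2$-plane $P=\langle v,w\rangle$, the equation $T(x,v,w)=0$ is one linear condition on $x$. Choose a $k$-rational flag or subspace $U\subset V$ of dimension $3$. The restriction $A(x)|_U$ is a $3\times3$ alternating matrix, equivalently a vector of $3$ linear forms in $x$. These have a common zero $x_0\in\PP^3$ over $k$ (three linear forms in four variables), so $A(x_0)$ vanishes on $U$. Then $\operatorname{Pf}A(x_0)=\pm\det B(x_0)$, which need not vanish. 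We retain freedom, however: the $3$-spaces $U$ form $\Gr(3,6)$, a $9$-dimensional rational variety. Requiring both $A(x)|_U=0$ and $\det B(x)=0$ is exactly the condition that $A(x)$ have an isotropic subspace strictly larger than generic, i.e.\ that it be degenerate. So consider
\[
\mathcal I=\{(x,U): A(x)|_U=0\}\to \PP^3 .
\]
Over a nondegenerate $x$ the fiber is the Lagrangian Grassmannian $LG(3,6)$, of dimension $6$. Over a point $x\in S$ the fiber is larger (dimension $7$). Now $\mathcal I\to\Gr(3,6)$ is generically a $\PP^0$ (a point), so $\mathcal I$ is birational to $\Gr(3,6)$ and has many $k$-points. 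The task is then to find a $k$-point of $\mathcal I$ lying over $S$, for example by intersecting with a $k$-rational Schubert condition that forces degeneracy.

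\textbf{Main obstacle.} The hard step is choosing this Schubert condition: a $k$-rational subvariety of $\Gr(3,6)$ whose preimage in $\mathcal I$ maps into $S$ and is nonempty over $k$. First I would try $U$ containing a fixed $k$-line $\ell=\langle v\rangle$, together with the requirement $A(x)v=0$ on a complementary $k$-hyperplane. If that fails, I would fall back on a Cassels/Springer-type parity argument, namely that $S$ has a $k$-point of degree prime to $3$ (e.g.\ degree $2$ or $4$ from such incidences), combined with the already-available degree-$2$ result of Coray.
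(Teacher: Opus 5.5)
Your proposal has a genuine gap: it never produces a $k$-point. The decisive step is exactly the one you call the ``main obstacle'', and you leave it open. Steps~1 and~2 are abandoned. Step~3 sets up the incidence $\{(x,U):A(x)|_U=0\}$ but gives no way to find a $k$-point of it lying over $S$. The fallback does not close the gap either. You give no mechanism that produces a point of degree $2$ on $S$. A point of degree $4$ would not help unless you already knew the degree-$4$ case, and that is the hard new case this theorem is meant to settle (via Cassels' $4\leftrightarrow5$ reduction and the Pfaffian presentation). There are also two factual slips. The isotropic $3$-spaces of a rank-$4$ form on $k^6$ form a $6$-dimensional family, not a $7$-dimensional one. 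And your count in Step~2 ignores the identity $v^{T}A(x)v=0$, so $Y$ is not a generic $6\times4$ determinantal locus; the degree $20$ is wrong, and is inconsistent with $Y$ being a threefold.

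The missing idea is to stop varying $x$ and work with the coefficient matrices instead.
\begin{itemize}
\item Write $A(x)=\sum_{i=0}^3x_iA_i$ and look for a $3$-space $W\subset V$ that is isotropic for $A_0,A_1,A_2$ \emph{simultaneously}.
\item The incidence of such $(W,A_0,A_1,A_2)$ is a rank-$36$ vector bundle over $\Gr(3,6)$. It therefore has dimension $45=\dim(\bigwedge^2V^*)^3$.
\item Checking one explicit triple shows that a general triple has exactly two such $W$, forming a finite \'etale scheme of degree $2$. So $W$ is defined over some $L$ with $[L:k]\le 2$.
\item $A_3|_W$ is alternating on an odd-dimensional space, so it has a nonzero radical vector $v\in W$. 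Then all four vectors $A_0v,\dots,A_3v$ lie in the $3$-dimensional space $\Ann(W)$.
\end{itemize}
This repairs your Step~1 complaint that $A(x)v=0$ is six conditions on four unknowns: for this particular $v$ it is only three. Hence some $c\ne0$ in $L^4$ satisfies $(\sum c_iA_i)v=0$, so $\Pf(\sum c_iA_i)=0$. This point of $S$ has degree at most $2$, and the secant-line argument turns it into a $k$-point. The ``exactly two'' statement holds only on an open set of triples. Special triples are handled by deforming $(A_0,A_1,A_2)$ linearly over $k(t)$ toward the explicit good triple, obtaining a $k(t)$-point, and specializing $t\to0$ through the discrete valuation ring $k[t]_{(t)}$.
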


The second proof of Theorem \ref{thm:deg=4-intro}, given in Section \ref{sec:degree-four}, works directly with the given closed point $P$ of degree $4$ on $S$. We construct a suitable tangent direction at $P$, giving rise to a nonreduced subscheme $Z$ of $S$ of length $8$ that is contained in the vanishing locus of three linearly independent quadratic forms $Q_1,Q_2,Q_3$.
If the corresponding quadrics have zero-dimensional base locus, then the cubic form $F$ that cuts out $S$ can be written as $F=H_1Q_1+H_2Q_2+H_3Q_3$ for linear forms $H_i$ (see Lemma \ref{lem:net-of-quadrics}), and any $k$-rational point of ${H_1=H_2=H_3=0}$ lies on $S$.
If the base locus of the linear series of quadrics through $Z$ is not zero-dimensional, then we show that either $S$ has a rational point or the base locus is a twisted cubic $\Gamma$.
If $\Gamma$ lies in $S$, then $S$ has a rational point because $\Gamma\cong \CP^1_k$ (see Lemma \ref{lem:twisted-cubic}).
Otherwise, $\Gamma\cap S$ has degree $9$ and contains the subscheme $Z$ of length $8$, so the residual intersection has degree $1$ and yields a rational point.

\begin{remark} \label{rem:delPezzo}
Theorem \ref{thm:main:intro} implies that a smooth del Pezzo surface of degree $3$ over an arbitrary field has a rational point if and only if it has index $1$, equivalently, a zero-cycle of degree $1$.
This equivalence was already known for del Pezzo surfaces of degree at least $4$; the case of perfect fields is due to Coray \cite{coray2}, degree $4$ over arbitrary fields is due to Amer, Brumer and Leep (see e.g.~\cite[Corollary 18.7]{EKM}), and the higher degrees can be deduced from \cite[\S6--\S10]{AB} and the references therein.  
The statement also holds in degree $1$, as every del Pezzo surface of degree $1$ has a rational point, namely the base point of its anticanonical pencil. 
However, the statement fails for del Pezzo surfaces of degree $2$, even in characteristic zero, see \cite{KM17} and \cite[Remarque~4.3]{CT-zero-cycles}.
The analogue of Theorem \ref{thm:main:intro} also fails for quartic curves and surfaces, see \cite[Example 2.8]{coray} and \cite{BN}, respectively.  
\end{remark}

The starting points for this paper were two GPT-assisted proofs obtained independently by the authors; see the AI disclosure below for details of the interactions with GPT and the respective contributions.

\section{Preliminaries and degree reductions} \label{sec:preliminaries}
 
Throughout this section, the field $k$ is perfect.
The degree of a finite $k$-scheme is its length over $k$. 
We use the associated effective zero-cycle when taking residual intersections.

The reductions in Lemmas \ref{lem:secant} and \ref{lem:position} below 
are classical, see \cite[Propositions~2.2 and~2.3]{coray}. 

\begin{lemma}
\label{lem:secant}
Let $X\subset\PP^3_k$ be the zero locus of a nonzero cubic form.
If $X$ has an effective zero-cycle of degree one or two, then
$X(k)\ne\varnothing$.
\end{lemma}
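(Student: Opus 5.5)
Since $k$ is perfect, every closed point of $X$ has a separable residue field. An effective zero-cycle of degree one is a rational point, so we may assume the cycle has degree two. If it is a sum of two rational points we are done, so the only remaining case is a single closed point $P$ of degree $2$ with residue field $K=k(P)$, a separable quadratic extension of $k$. Let $\sigma$ be the nontrivial element of $\Gal(K/k)$. Over $K$ the point $P$ splits into two distinct conjugate $K$-points $p$ and $\sigma(p)$ of $\PP^3$.

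The line $L$ through $p$ and $\sigma(p)$ is $\Gal$-stable, so it is defined over $k$, and $L\cong\PP^1_k$ because it is a $k$-line in $\PP^3_k$. We then split into two cases.

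\emph{Case 1: $L\subset X$.} Then every point of $L(k)\neq\varnothing$ lies in $X$, so $X(k)\neq\varnothing$.

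\emph{Case 2: $L\not\subset X$.} The restriction of the cubic form to $L\cong\PP^1_k$ is a nonzero binary cubic form, so $L\cap X$ is a finite subscheme of $L$ of degree $3$. It contains the degree-$2$ subscheme $P$, since $P$ is reduced and lies on both $L$ and $X$. Hence the binary cubic factors over $k$ as the quadratic form defining $P$ times a linear form. The zero of this linear form is the residual point, a $k$-rational point of $X$. Phrased with cycles, the residual cycle $(L\cdot X)-P$ is effective of degree $1$.

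The only point needing care is the divisibility in Case 2, namely that the irreducible quadratic $q$ cutting out $P$ on $L$ divides the cubic $F|_L$. This holds because $F|_L$ vanishes at the two distinct roots of $q$ over $K$, so $q$, which is separable, divides $F|_L$ in $K[s,t]$. Both forms are defined over $k$, so $q$ divides $F|_L$ in $k[s,t]$ as well.

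Note that the argument does not use smoothness or irreducibility of $X$. It also does not need perfectness beyond guaranteeing that $p\neq\sigma(p)$. In the inseparable case one would use the tangent direction instead, but that case does not arise here.
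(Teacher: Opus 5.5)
Your proof is correct and follows the paper's route: reduce to a closed point of degree two, take the $k$-line spanned by its two conjugates, and then either the line lies in $X$ or the residual intersection point is $k$-rational. Your divisibility argument showing that the quadratic $q$ divides $F|_L$ just spells out the step the paper states as effectivity of $[\ell\cap X]-[P]$.
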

\begin{proof}
It suffices to consider a closed point $P$ of degree two. Its two
conjugates span a $k$-line $\ell$. If $\ell\subset X$, it supplies a
rational point. Otherwise $[\ell\cap X]-[P]$ is an effective
zero-cycle of degree one.
\end{proof}

\begin{lemma}\label{lem:position}
Let $P$ be a closed point of degree $d\in\{4,5\}$ on a smooth cubic
surface $S/k$.
\begin{enumerate}
\item If $P$ is contained in a $k$-plane, then $S(k)\ne\varnothing$.\label{item:lem:position:1}
\item If $P$ is not contained in a $k$-plane, its geometric points are
in general linear position: any four of them span $\PP^3$.\label{item:lem:position:2}
\end{enumerate}
\end{lemma}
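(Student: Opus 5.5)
Both parts follow the pattern of Lemma \ref{lem:secant}: intersect $S$ with a suitable $k$-rational linear space containing $P$ and look at the residual intersection. Throughout, $k$ is perfect, so $P$ is étale over $k$ and has $d$ distinct geometric points, permuted transitively by $\Gal(\bk/k)$.

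For \eqref{item:lem:position:1}, let $H$ be a $k$-plane containing $P$ and set $C=H\cap S$, a plane cubic curve over $k$ (not all of $H$, since $S$ is an irreducible cubic). If $C$ is reducible over $k$, it has a $k$-rational line component, and we are done, or it is a line plus a conic. In the latter case, if the conic is reducible over $\bk$, it is either a pair of lines whose intersection point is Galois-invariant, hence $k$-rational, or a double line; the double line is impossible since $S$ is smooth and $C$ is a hyperplane section of a smooth surface, which has no multiple components. If $C$ is an irreducible cubic, it is reduced because $S$ is smooth, so singular curves are handled via a rational singular point. If $C$ is geometrically irreducible and singular, its unique singular point is rational. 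If it is irreducible over $k$ but not geometrically irreducible, the Galois conjugate components meet in a Galois-stable set, which yields a rational point or a degree-two point, and then Lemma \ref{lem:secant} applies.

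The remaining case is a smooth plane cubic $C$ with a point of degree $d\in\{4,5\}$, which is coprime to $3$ when $d=4$ or $d=5$. I will use the classical plane cubic result (Poincaré, cf.\ \cite[Proposition~2.3]{coray}) that a smooth plane cubic with a zero-cycle of degree coprime to $3$ has a rational point. Alternatively one can argue directly. For $d=4$, the pencil of conics through the four points contains a conic meeting $C$ residually in a degree-$2$ cycle, and Lemma \ref{lem:secant} applies inside $C$. For $d=5$, the unique conic through the five points meets $C$ residually in a point of degree $1$, provided the conic is not a component of $C$, which would reduce to the reducible case above.

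For \eqref{item:lem:position:2}, suppose four of the geometric points lie on a plane $\Pi$. For $d=4$, $\Pi$ contains all four points, so by uniqueness it is Galois-stable, hence defined over $k$, contradicting the hypothesis of \eqref{item:lem:position:2} via part \eqref{item:lem:position:1}. For $d=5$, I first check whether some three of the points are collinear. Then a line contains three points of the Galois orbit, and a counting or orbit argument on the set of such lines shows either that the line lies in $S$ or that all points are coplanar. I expect this combinatorial step to be the main obstacle. The key point is that if a line meets $S$ in three points of $P$, the line is not in $S$ only when those points are its full intersection with $S$.

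Otherwise, let $\mathcal{F}$ be the set of $4$-subsets of the five points spanning only a plane. $\mathcal{F}$ is Galois-stable, and $\Gal$ acts transitively on the five points. If $\mathcal{F}$ contained two distinct subsets, they would share three non-collinear points, hence span the same plane, which would then contain all five points and give a $k$-plane, a contradiction. So $\mathcal{F}$ is a single subset, and its complement is a single Galois-fixed point. This contradicts transitivity, since $P$ is a closed point of degree $5$.
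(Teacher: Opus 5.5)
There are two genuine gaps, both in places where you leave the uniform argument for a case analysis.

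\textbf{Part (1), triangle case.} Take the case where $C=H\cap S$ is irreducible over $k$ but is geometrically three Galois-conjugate lines $L_1,L_2,L_3$. You claim their intersection points give a rational point or a point of degree two. That holds only if the lines are concurrent. If they form a triangle, Galois permutes the three vertices $L_i\cap L_j$ transitively, so they form a closed point of degree $3$, and Lemma~\ref{lem:secant} does not apply.

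This configuration is in fact incompatible with $P\subset C$, but you need $P$ to see it. A vertex cannot be a geometric point of $P$, since its orbit has size $3\ne d$. The Galois-equivariant map sending each geometric point of $P$ to the unique $L_i$ containing it then forces $3\mid d$.

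The simpler fix is the conic argument you already use for smooth $C$, which works for every $C$. Take a $k$-conic through $P$. Either it meets $C$ properly, leaving an effective residual cycle of degree $6-d\le 2$. Or it shares a component with $C$, and then the gcd over $k$ of the two equations is linear or quadratic and yields a $k$-line in $S$. In the triangle case it cannot share a component, since it would have to contain all three $L_i$. This is exactly the paper's proof of (1), and it makes your whole case analysis unnecessary.

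\textbf{Part (2), $d=5$, collinear case.} You leave the case where three of the points are collinear unproved (``a counting or orbit argument\dots the main obstacle''). Your argument for the other case genuinely needs non-collinearity: two coplanar $4$-subsets sharing three collinear points need not lie in the same plane.

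The missing observation is that Galois acts transitively on the five $4$-element subsets, since they correspond to their complements. So your Galois-stable set $\mathcal F$ is either empty or consists of all five subsets. In the latter case all five points are coplanar. If some three are non-collinear, the two remaining points both lie in the plane those three span; otherwise all five are collinear. Their span is then a Galois-stable linear space of dimension at most two, hence contained in a $k$-plane, a contradiction.

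This needs no case distinction and no reference to $S$; whether a line lies in $S$ plays no role. It is the paper's argument for (2). Your $d=4$ argument has a similar small blind spot: ``by uniqueness'' fails if the four points are collinear. There the Galois-stable span is a $k$-line, which again lies in a $k$-plane.
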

\begin{proof}
For \eqref{item:lem:position:1}, which goes back to Poincar\'e \cite{poincare}, 
let $\Pi$ be a $k$-plane that contains $P$. 
Since $h^0(\Pi,\OO_\Pi(2))=6>d$, there is a conic $Q\subset\Pi$
containing $P$. Write $C=S\cap\Pi$. If $C$ and $Q$ have no common
component, then $[C\cap Q]-[P]$ is effective of degree $6-d\le2$,
and Lemma~\ref{lem:secant} applies. Otherwise the greatest common
divisor of their equations over $k$ has degree one or two. In the
first case it defines a $k$-line on $S$; in the second, the residual
factor of the cubic defines such a line.

For \eqref{item:lem:position:2}, the Galois group acts transitively
on the four-element subsets of the geometric points of $P$.
Thus, if one such subset were coplanar, all would be, and the
span of $P$ would be contained in a $k$-plane, a contradiction. 
\end{proof}

The next lemma studies the arithmetic of twisted cubics.

\begin{lemma}\label{lem:twisted-cubic}
Let $\Gamma\subset \PP_k^3$ be a smooth projective  curve over a field $k$, whose base change to $\bar k$ is a twisted cubic.
Then $\Gamma\cong \PP^1_k$ and $\Gamma\subset \PP_k^3$ is a twisted cubic.
\end{lemma}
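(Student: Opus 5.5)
The plan is to show that $\Gamma$ is a smooth genus-zero curve carrying a $k$-rational divisor class of odd degree, which forces $\Gamma\cong\PP^1_k$. First, $\Gamma_{\bar k}$ is a twisted cubic, so it is isomorphic to $\PP^1_{\bar k}$. Hence $\Gamma$ is a smooth projective geometrically integral curve of genus $0$, that is, a Brauer--Severi curve (a conic). Such a curve is isomorphic to $\PP^1_k$ as soon as it has a $k$-divisor of odd degree, or equivalently a line bundle defined over $k$ of odd degree. Indeed, the anticanonical bundle $\omega_\Gamma^{-1}$ embeds $\Gamma$ as a plane conic. If $L$ is a $k$-line bundle of degree $2m+1$, then $L\otimes\omega_\Gamma^{m}$ has degree $1$ and its sections give an isomorphism $\Gamma\to\PP^1_k$. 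By Riemann--Roch, $h^0=2$ for this bundle, so it is base point free and of degree one.

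The required odd-degree bundle is the hyperplane restriction $\OO_{\PP^3_k}(1)|_\Gamma$. It is defined over $k$ and has degree $3$, because after base change it is $\OO_{\PP^1}(3)$ on the twisted cubic. Alternatively, one can argue with a general $k$-plane $H$: the intersection $H\cap\Gamma$ is an effective zero-cycle of degree $3$ on $\Gamma$, and subtracting the degree $2$ divisor given by $\omega_\Gamma^{-1}$ yields a divisor class of degree $1$. Riemann--Roch on a genus-zero curve then produces a rational point.

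Next, take $\varphi\colon\PP^1_k\cong\Gamma$. The restriction $\OO(1)|_\Gamma$ then corresponds to $\OO_{\PP^1_k}(3)$, since degree is preserved under base change. The embedding $\Gamma\subset\PP^3_k$ is given by a $4$-dimensional subspace of $H^0(\PP^1_k,\OO(3))$; it is the image of the restriction map from $H^0(\PP^3_k,\OO(1))$. This map is injective because $\Gamma$ is nondegenerate, which can be checked over $\bar k$. Since $h^0(\OO(3))=4$, the subspace is all of $H^0(\OO(3))$. Therefore, after a $k$-linear change of coordinates, $\Gamma$ is the image of $[s:t]\mapsto[s^3:s^2t:st^2:t^3]$, the standard twisted cubic over $k$.

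The main obstacle, and the only real point, is the genus-zero step: passing from the odd-degree class to a rational point. This is standard but needs to hold over an arbitrary, possibly imperfect, field. The argument via Riemann--Roch for the degree-one bundle $L\otimes\omega_\Gamma^{m}$ works in general. Smoothness of $\Gamma$ over $k$ is given, so $\omega_\Gamma$ is a line bundle of degree $-2$ defined over $k$, and no separability issues arise.
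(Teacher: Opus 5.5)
Your proposal is correct and follows the same route as the paper. Twisting the degree-three bundle $\OO(1)|_\Gamma$ by $\omega_\Gamma$ gives a line bundle over $k$ of degree one, hence $\Gamma\cong\PP^1_k$, and the embedding is then forced to be the complete linear series $|\OO_{\PP^1_k}(3)|$. You also spell out details the paper leaves implicit: Riemann--Roch and base-point freeness for the degree-one bundle, and injectivity of $H^0(\PP^3_k,\OO(1))\to H^0(\Gamma,\OO(1)|_\Gamma)$, checked via nondegeneracy over $\bar k$.
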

\begin{proof}
Since $\Gamma_{\bar k}$ is a twisted cubic, $\mathcal O(1)|_{\Gamma}$ is a line bundle of degree three.
Hence, $\omega_\Gamma\otimes \mathcal O(1)$ is a line bundle of degree one and so $\Gamma$ is rational.
Under the identification $\Gamma\cong\PP^1_k$, the embedding $\Gamma\subset \PP_k^3$ must then be given by the full linear series $|\mathcal O_{\PP^1_k}(3)|$, hence $\Gamma$ is a twisted cubic.
\end{proof}

The next proposition follows from a result of Cassels, see
\cite[Proposition~3.1, pp.~274--275]{coray}. 
We give a complete argument for convenience of the reader.

\begin{proposition}[Cassels]\label{prop:four-five}
A smooth cubic surface $S/k$ has an effective zero-cycle of degree
four if and only if it has an effective zero-cycle of degree five.
More precisely, a closed point of either degree yields either a
rational point or a closed point of the other degree.
\end{proposition}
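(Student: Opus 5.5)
The plan is to use twisted cubics defined over $k$ and take residual intersections with $S$. A cubic surface and a twisted cubic $\Gamma$ not contained in it meet in an effective zero-cycle of degree $9$. The tool is the classical fact that through six points of $\PP^3_{\bar k}$ in linear general position (no four coplanar) there passes a unique twisted cubic. Uniqueness gives Galois descent: if the six-point set is $\Gal(\bar k/k)$-stable, this twisted cubic is defined over $k$, and Lemma~\ref{lem:twisted-cubic} identifies it with $\PP^1_k$.

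First I dispose of trivial cases. If $k$ is finite, $S(k)\neq\varnothing$ by Chevalley--Warning, so assume $k$ is infinite. Let $P$ be a closed point of degree $d\in\{4,5\}$. By Lemma~\ref{lem:position}\eqref{item:lem:position:1} we may assume $P$ lies in no $k$-plane, and then by Lemma~\ref{lem:position}\eqref{item:lem:position:2} its geometric points are in linear general position.

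Now complete $P$ to six points by adding $6-d$ rational points $q_i\in\PP^3(k)\setminus S(k)$ (one if $d=5$, two if $d=4$). They are chosen so that the six geometric points are in linear general position. This means avoiding the finitely many planes spanned by triples of geometric points of $P$, the lines spanned by pairs (and the line through $q_1$ and a pair when $d=4$), and the surface $S$ itself. This is possible because $k$ is infinite and the excluded loci are finitely many proper closed subsets, defined over $\bar k$ but with finitely many conjugates. The resulting set is Galois-stable, so the unique twisted cubic $\Gamma$ through it is defined over $k$. If $\Gamma\subset S$, then $\Gamma\cong\PP^1_k$ supplies a rational point. Otherwise $\Gamma\cap S$ is an effective zero-cycle of degree $9$ containing $[P]$ but not the $q_i$, since these lie off $S$. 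Hence $R=[\Gamma\cap S]-[P]$ is an effective zero-cycle of degree $9-d$, which equals $5$ if $d=4$ and $4$ if $d=5$.

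The remaining step is bookkeeping on the decomposition of $R$ into closed points with multiplicities. If some closed point in the support of $R$ has degree $1$ or $2$, Lemma~\ref{lem:secant} gives a rational point. If $\deg R=4$, the only partition into parts of size at least $3$ is a single point of degree $4$; the partition $3+1$ is excluded since it contains a part of size $1$. If $\deg R=5$, the only such partition is a single point of degree $5$, because $3+2$ contains a part of size $2$. Multiplicities, for instance $R$ containing $P$ again through tangency, are covered by the same count. Thus a point of degree $4$ yields a rational point or a point of degree $5$, and vice versa. The implication from effective zero-cycles to closed points follows by applying the same partition argument to the given cycle.

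The main obstacle I expect is justifying the twisted-cubic input carefully. This means the existence and uniqueness of the twisted cubic through six points in linear general position over $\bar k$, and confirming that the descended curve $\Gamma$ is smooth, so that Lemma~\ref{lem:twisted-cubic} applies. I would cite the classical construction: project from two of the points to reduce to conics through five points in the plane. The general-position requirements on the $q_i$ are exactly what this construction needs.
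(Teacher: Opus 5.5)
Your proof is correct, but it builds the twisted cubic differently from the paper.

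\textbf{The paper's construction.} The paper builds a $k$-twisted cubic through $P$ directly. For $d=4$, the coordinates $a_0,\dots,a_3\in E=\kappa(P)$ of $P$ form a $k$-basis of $E$. Hence a matrix in $\mathrm{GL}_4(k)$ carries $[1:\theta:\theta^2:\theta^3]$ to $P$, and therefore carries $\nu_3(\PP^1_k)$ to a curve through $P$. For $d=5$, it matches the five conjugates of $P$ with the points $[1:\theta_i:\theta_i^2:\theta_i^3]$ as labelled projective frames, and descends the unique projectivity between them.

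\textbf{Your construction.} You pad $P$ with $6-d$ rational points off $S$ and descend the unique twisted cubic through six points in linear general position.

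\textbf{What your route buys.} It treats $d=4$ and $d=5$ uniformly. It also gives a small bonus: $\Gamma$ passes through the rational points $q_i\notin S$. So the case $\Gamma\subset S$ cannot occur, and $\Gamma\cong\PP^1_k$ is immediate without Lemma~\ref{lem:twisted-cubic}.

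\textbf{What it costs.} You need the classical six-point theorem as a black box. It is characteristic-free: move four of the points to the coordinate vertices and write the candidate curves as $[a_0/(t-b_0):\dots:a_3/(t-b_3)]$. You also need $k$ infinite, hence the separate Chevalley--Warning step. The paper's construction works verbatim over finite fields.

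One small slip: for $d=4$, the second auxiliary point must avoid the \emph{planes} spanned by $q_1$ and a pair of geometric points of $P$, not lines.
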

\begin{proof}
If $S(k)\ne\varnothing$, multiples of a rational point give both
cycles. Otherwise Lemma~\ref{lem:secant} excludes points of degrees
one and two; hence an effective zero-cycle of degree
$d\in\{4,5\}$ must be a single closed point $P$. By
Lemma~\ref{lem:position}, its geometric points are in general linear
position.

We claim that $P$ lies on a twisted cubic over $k$.
To see this, write
$E=\kappa(P)=k(\theta)$ for a primitive element $\theta$ and let
\[
 \nu_3:\PP^1_k\longrightarrow\PP^3_k,
 \qquad [s:t]\longmapsto[s^3:s^2t:st^2:t^3].
\]
For $d=4$, choose homogeneous coordinates
$P=[a_0:a_1:a_2:a_3]$ with $a_i\in E$. Noncoplanarity means that
the $a_i$ form a $k$-basis of $E$. Comparing this basis with
$1,\theta,\theta^2,\theta^3$ gives a matrix in $\mathrm{GL}_4(k)$
taking $[1:\theta:\theta^2:\theta^3]$ to $P$.
Its image of $\nu_3(\PP^1)$ is the required curve.

For $d=5$, let $\theta_1,\ldots,\theta_5$ be the conjugates of
$\theta$ in a Galois splitting field. The five points
$[1:\theta_i:\theta_i^2:\theta_i^3]$ for $i=1,\dots ,5$ form a projective frame, by
the Vandermonde determinant. The corresponding conjugates of $P$
also form a projective frame. 
There is a unique projectivity matching
these two labelled frames. Both labellings have the same Galois action, 
so uniqueness makes the projectivity Galois invariant; it therefore
descends to $k$. 
Again its image of $\nu_3(\PP^1)$ is a 
twisted
cubic $\Gamma$ containing $P$.

If $\Gamma\subset S$, the curve supplies a $k$-point. Otherwise
B\'ezout's theorem gives
\begin{equation}\label{eq:residual}
 \deg(S\cap\Gamma)=9,
 \qquad [S\cap\Gamma]-[P]\ \text{effective of degree }9-d.
\end{equation}
Under the standing assumption $S(k)=\varnothing$, this effective
cycle is a single closed point, as above.
\end{proof}


\section{The degree-five argument: Pfaffian presentations}  \label{sec:degree-five}
In Section~\ref{sec:pfaffian} we work over a field of characteristic different from two; then in Section~\ref{sec:five} we restrict ourselves to characteristic zero.
Our goal is to prove Theorem \ref{thm:pfaffian-intro} stated in the introduction and show that it implies the following theorem.

\begin{theorem}\label{thm:five}
A smooth cubic surface over a field of characteristic zero with an
effective zero-cycle of degree five has a rational point.
\end{theorem}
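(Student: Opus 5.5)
We plan to deduce Theorem~\ref{thm:five} from Theorem~\ref{thm:pfaffian-intro}, following the strategy sketched in the introduction. Let $S=\{F=0\}\subset\PP^3_k$ be smooth and assume $S(k)=\varnothing$. By Lemma~\ref{lem:secant} and the argument in Proposition~\ref{prop:four-five}, an effective zero-cycle of degree five is then a single closed point $P$ of degree $5$. By Lemma~\ref{lem:position}, its five geometric points are in general linear position: they form a projective frame in $\PP^3_{\bar k}$, so $P$ is not contained in any $k$-plane.

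The main step is to realize $F$ as a $6\times 6$ linear Pfaffian over $k$, using Buchsbaum--Eisenbud and Beauville. Five points in general position in $\PP^3$ are arithmetically Gorenstein of codimension $3$: their ideal is generated by the five quadrics through them, namely the $4\times4$ Pfaffians of a $5\times5$ alternating matrix of linear forms. Since the quadrics through $P$ form a $k$-rational $5$-dimensional space, the Buchsbaum--Eisenbud resolution
\[
0\to\OO(-5)\to\OO(-3)^5\xrightarrow{N}\OO(-2)^5\to\OO\to\OO_P\to0
\]
exists over $k$, with $N$ alternating. Because $F\in I_P$ (as $P\subset S$), we can write $F=\sum_i L_iq_i$ with linear forms $L_i$ over $k$ and $q_i=\Pf(N_{\hat i})$, up to sign. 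We then form the $6\times6$ alternating matrix
\[
A=\begin{pmatrix} N & L\\ -L^{T} & 0\end{pmatrix},
\]
and the Laplace-type expansion of the Pfaffian along the last row gives $\Pf(A)=\pm\sum_i L_i q_i=\pm F$. Theorem~\ref{thm:pfaffian-intro} then says $F$ has a nontrivial zero over $k$, which contradicts $S(k)=\varnothing$. This is also Beauville's construction of the Pfaffian presentation attached to a degree-$5$ arithmetically Gorenstein subscheme. Under this plan, the smoothness of $S$ is only used to get general position via Lemma~\ref{lem:position}.

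The only subtle point in the reduction is verifying that the Buchsbaum--Eisenbud structure theorem holds over a non-closed field. I expect it does, because it is stated for Gorenstein ideals over any Noetherian local ring (graded case) without closedness assumptions. One can also argue by descent: the resolution of $k[x]/I_P$ is computed over $k$, and the self-duality giving alternating $N$ can be chosen $k$-rationally. I also need to check that $I_P$ is generated in degree $2$ with five quadrics, which follows from general position over $\bar k$ by a dimension count, since $10-5=5$.

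The genuine obstacle is therefore Theorem~\ref{thm:pfaffian-intro} itself. For it, I would first try to find a $k$-point on the Pfaffian cubic directly. Points where $A(x)$ has rank $\le2$ lie on the Pfaffian, but they need not be rational. Instead, I would consider the isotropic geometry: the Pfaffian cubic $X$ parametrizes $x$ such that the skew form $A(x)$ on $k^6$ is degenerate. For a vector $v\in k^6$, the condition $A(x)v\in\langle v\rangle^{\perp}$ is automatic. Requiring $A(x)v=0$ is six linear conditions in four unknowns, so in general it has no solution, and I would need a cleverer choice of subspace. I would try to choose a $k$-rational $3$-dimensional subspace $W$ and restrict to the linear conditions making $W$ isotropic for $A(x)$, which are $3$ equations in $4$ variables. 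These always have a nonzero solution $x\in k^4$, and a skew form on $k^6$ with a $3$-dimensional isotropic subspace is not automatically degenerate, so this alone does not finish. I would refine it by also imposing one more condition to make the form degenerate, which leaves a quadric condition. That gives a pencil/quadric-cone argument on the resulting line, which needs $\mathrm{char}\neq2$ and is where I expect the real difficulty to lie.
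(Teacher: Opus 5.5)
Your argument is the paper's proof. You reduce via Lemma~\ref{lem:secant} and Lemma~\ref{lem:position} to a single degree-$5$ point in general linear position, obtain a $k$-rational linear $6\times 6$ Pfaffian presentation from the descended Buchsbaum--Eisenbud resolution exactly as in Lemma~\ref{lem:presentation}, and conclude with Theorem~\ref{thm:pfaffian-intro}. Your closing speculation about proving Theorem~\ref{thm:pfaffian-intro} is not needed here, and as sketched it would stall: for a generic $k$-rational $W$ the three isotropy conditions already cut out a single point of $\PP^3$, leaving no room to force degeneracy. The paper instead takes $W$ isotropic for $A_0,A_1,A_2$, which for a generic triple exists over an extension of degree at most two, uses a radical vector of $A_3|_W$, and then specializes to arbitrary triples.
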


\subsection{Pfaffian cubics have rational points}\label{sec:pfaffian}

Let $V=k^6$ and put $\cT=(\bigwedge^2V^*)^3\simeq\Aff^{45}_k$.
For an alternating form $A\in \bigwedge^2V^*$, a subspace $W\subset V$ is isotropic
if $A|_{W\times W}=0$.

Consider the scheme
    \[
 \cI=\{(W,A_0,A_1,A_2) \; \mid \;  A_i|_{W\times W}=0\ (0\le i\le2)\}
 \subset\Gr(3,V)\times\cT
\]
which parametrizes three-spaces $W\subset V$ that are isotropic for each entry of $(A_0,A_1,A_2)\in \cT$.
The following lemma is an ordered-triple version of Han's incidence calculation
\cite[Proposition~2.4]{Han} (who credits Sasha Kuznetsov).

\begin{lemma}
\label{lem:incidence}
  There is a nonempty open subset $\cT^\circ\subset\cT$, defined over
  $k$, such that 
 the natural map $\mathcal I\times_{\cT}\cT^\circ\to \cT^\circ$ is finite \'etale of degree two. 
\end{lemma}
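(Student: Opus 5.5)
We treat $\cI$ as a vector bundle over the Grassmannian, compute that $\cI\to\cT$ is a map of equidimensional varieties of the same dimension, and then show that a general fibre consists of exactly two reduced points. Finiteness and étaleness then follow by generic-smoothness and openness arguments.

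\emph{Structure and dimension of $\cI$.} The projection $\cI\to\Gr(3,V)$ is the total space of a vector bundle. The fibre over $W$ is the linear subspace of triples $(A_0,A_1,A_2)$ with each $A_i$ in the kernel of the restriction map $\bigwedge^2V^*\to\bigwedge^2W^*$. This restriction is surjective of rank $3$, so each $A_i$ ranges over a subspace of dimension $15-3=12$. Hence $\cI$ is smooth and irreducible of dimension $\dim\Gr(3,6)+3\cdot 12=9+36=45=\dim\cT$. It follows that $\pi\colon\cI\to\cT$ is a morphism between smooth irreducible $k$-varieties of the same dimension, and $\pi$ is proper because $\Gr(3,V)$ is projective.

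\emph{A general fibre has exactly two reduced points.} This is the main step, and it suffices to check it at one point over $\bar k$. I would take the standard example (Han/Kuznetsov). Identify $V=U\oplus U^*$ with $U=\bar k^3$, and choose $A_0,A_1,A_2$ so that the common isotropic $3$-spaces can be computed explicitly; the aim is to obtain exactly $U$ and $U^*$. One concrete choice is $A_i(u+\xi,u'+\xi')=\xi(M_iu')-\xi'(M_iu)$ for suitable matrices $M_i$, which makes both $U$ and $U^*$ isotropic. One then shows that a $3$-space isotropic for all three forms and meeting neither $U$ nor $U^*$ (or meeting them in a special way) cannot exist for general $M_i$. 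Reducedness amounts to injectivity of the differential of $\pi$ at these two points. The tangent space to the fibre at $W$ is the space of $\varphi\in\operatorname{Hom}(W,V/W)$ with $A_i(\varphi w,w')+A_i(w,\varphi w')=0$ for all $i$. This gives $9$ linear conditions on a $9$-dimensional space, so one must check that the resulting $9\times 9$ system is nondegenerate at the example.

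An alternative cleaner route is to compute the degree as a Chern number. The common zero locus of the three sections of $\bigwedge^2\mathcal{S}^*$ on $\Gr(3,6)$, where $\mathcal{S}$ is the tautological subbundle, has class $c_3(\bigwedge^2\mathcal{S}^*)^3=\sigma_{1,1,1}\cdots$. Standard Schubert calculus gives $c_{\mathrm{top}}\bigl((\bigwedge^2\mathcal{S}^*)^{\oplus3}\bigr)=2$. Combining this with transversality at one point gives degree two.

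\emph{Finiteness and étaleness over an open set.} Since $\pi$ is proper and some fibre is finite, the locus where fibres are finite is open (upper semicontinuity of fibre dimension), and over it $\pi$ is finite. The ramification locus, where the differential fails to be an isomorphism, is closed in $\cI$, and its image is closed in $\cT$ by properness. Let $\cT^\circ$ be the complement of this image together with the locus of positive-dimensional fibres. These conditions are defined over $k$ because $\cI$ and $\pi$ are, so $\cT^\circ$ is a $k$-open set, and it is nonempty by the example above. Over $\cT^\circ$ the map is finite and étale, so it is flat and of locally constant degree; as $\cT^\circ$ is connected, the degree is $2$ everywhere. I expect the main obstacle to be the explicit verification that the example fibre consists of exactly two reduced points.
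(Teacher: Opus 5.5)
\textbf{Verdict: the overall plan is correct, but the step that carries the content of the lemma is not done.}

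Your framework matches the paper's proof and is fine: the rank-$36$ bundle over $\Gr(3,6)$, properness, removing the images of the positive-dimensional-fibre locus and the non-\'etale locus, and constancy of degree on the connected open $\cT^\circ$.

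The gap is the explicit fibre computation. You must exhibit a triple whose fibre is finite and consists of two reduced points. You only announce this step. The sentence that a common isotropic $3$-space meeting neither $U$ nor $U^*$ ``cannot exist for general $M_i$'' gives no way of classifying common isotropic subspaces. The $9\times 9$ tangent-space system is also left unchecked. Without these, neither the nonemptiness of $\cT^\circ$ nor the value $2$ of the degree is proved.

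The Chern-class route does not close the gap. First, the class is misstated: $c_3(\bigwedge^2\mathcal S^*)=\sigma_1\sigma_{1,1}-\sigma_{1,1,1}=\sigma_{2,1}$, not $\sigma_{1,1,1}$; your value $\sigma_{2,1}^3=2$ is nevertheless correct. Second, a top Chern number only gives the length of a fibre already known to be finite. It yields neither finiteness nor reducedness of any fibre.

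\textbf{The missing idea.} The paper uses the first two forms to cut the problem down. If $A_0$ is nondegenerate, a common isotropic $W$ is $A_0$-Lagrangian. Since $A_1(w,w')=A_0(Tw,w')$ with $T=A_0^{-1}A_1$, isotropy for $A_1$ forces $T(W)\subset W^{\perp_{A_0}}=W$.

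Your own model works once you add this step:
\begin{enumerate}
\item Take $M_0=I$ and $M_1=\diag(\mu_1,\mu_2,\mu_3)$ with distinct $\mu_j$. Then $T$ acts by $\mu_j$ on the $2$-dimensional $A_0$-symplectic planes $E_j=\langle e_j,e_j^*\rangle$. Hence $W=\ell_1\oplus\ell_2\oplus\ell_3$ with $\ell_j=[a_je_j+b_je_j^*]$, and the common isotropic locus of $A_0,A_1$ is $(\PP^1)^3$.
\item With $M_2=(m_{ij})$, isotropy for $A_2$ reads $m_{ij}b_ia_j=m_{ji}a_ib_j$ for $i<j$. Assume $m_{ij}\neq 0$ for $i\neq j$ and $m_{12}m_{23}m_{31}\neq m_{21}m_{32}m_{13}$.
\item Under this assumption, $a_i=0$ for one $i$ forces $a=0$, and $b_i=0$ for one $i$ forces $b=0$. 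If all $a_i,b_i\neq 0$, the ratios $b_i/a_i$ lead to the excluded identity. So the fibre is exactly $\{U,U^*\}$.
\item At $U$, your tangent space is $\{\Phi\in\operatorname{Hom}(U,U^*) : M_i^{T}\Phi \text{ symmetric for } i=0,1,2\}$. This forces $\Phi$ diagonal with $m_{ij}\phi_i=m_{ji}\phi_j$, hence $\Phi=0$ under the same condition. The same argument, with $M_i\Psi$ symmetric, works at $U^*$.
\end{enumerate}
Alternatively, once you know the fibre is finite with two distinct geometric points, your Chern number $2$ gives reducedness for free.

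With this computation your proposal becomes a complete proof, parallel to the paper's. The paper uses the same $T$-invariance step but a different third form $B_2$, for which the two fibre points are conjugate over $k(\sqrt{-2})$ rather than being $U$ and $U^*$.
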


\begin{proof}
For a three-space $W\subset V$, the restriction map $\bigwedge^2V^*\to\bigwedge^2W^*$ is surjective with $12$-dimensional kernel.
Thus $\cI$ is the total space of a rank-$36$ vector bundle over
$\Gr(3,V)=\Gr(3,6)$.
In particular, $\mathcal I$ is smooth of dimension $45$ and the projection
$\pi:\cI\to\cT$ is proper.

Write $V=V_1\oplus V_2\oplus V_3$, with $\dim V_i=2$, and set
\[
 I\coloneq \begin{pmatrix}1&0\\0&1\end{pmatrix}  ,\qquad J\coloneq \begin{pmatrix}0&1\\-1&0\end{pmatrix} \qquad \text{and}
 \qquad D\coloneq \begin{pmatrix}1&0\\0&2\end{pmatrix}.
\]
Consider the triple
of alternating $6\times 6$ matrices
\begin{equation}\label{eq:triple}
 B_0\coloneq \diag(J,J,J),\qquad B_1\coloneq \diag(J,2J,3J),\qquad
 B_2\coloneq \begin{pmatrix}0&I&I\\-I&0&D\\-I&-D&0\end{pmatrix}.
\end{equation}
We aim to compute the fiber of $\pi$ at $(B_0,B_1,B_2)$ scheme-theoretically.
A $B_0$-isotropic three-space $W$ is Lagrangian 
with respect to the standard symplectic form on $V=k^6$.
Put
$T=B_0^{-1}B_1=\diag(I,2I,3I)$. If $W$ is also
$B_1$-isotropic, then
\[
 B_0(Tw,w')=B_1(w,w')=0\qquad \text{for all }w,w'\in W . 
\]
Hence, $T(W)\subset W^{\perp_{B_0}}=W$.
Therefore, $W$ is $T$-invariant and splits into its intersections with the
three $T$-eigenspaces. Isotropy bounds the dimension of each summand by one,
so $W=\ell_1\oplus\ell_2\oplus\ell_3$ with $\ell_i\subset V_i$
a line. 
This description holds scheme-theoretically.
The common isotropic locus for $B_0,B_1$ is therefore $(\PP^1)^3$.
Writing $\ell_i=[a_i:b_i]$ and imposing isotropy with respect to $B_2$,
we obtain the following description of the fiber under consideration: 
\[
 a_1a_2+b_1b_2=0,\qquad
 a_1a_3+b_1b_3=0,\qquad
 a_2a_3+2b_2b_3=0.
\]
No $a_i$ vanishes at a geometric solution: if $a_1=0$ then
$b_2=b_3=0$, contradicting the third equation; the other cases are
similar. Set $u_i=b_i/a_i$. The 
above equations thus simplify to
\begin{equation}\label{eq:fiber}
 u_2=u_3=-u_1^{-1},\qquad u_1^2+2=0.
\end{equation}
Hence the scheme-theoretic fiber of $\pi$ at $(B_0,B_1,B_2)$ is finite and geometrically reduced of degree two.

At these two geometric points the differential of $\pi$ is an
isomorphism, since source and target are smooth of the same dimension
and the fiber tangent spaces vanish. Thus $\pi$ is \'etale 
there. Properness allows us to remove the image of its non-\'etale
locus and shrink around $(B_0,B_1,B_2)$; the resulting morphism is
proper and quasi-finite, hence finite \'etale, of degree two.
\end{proof}

\begin{proof}[Proof of Theorem~\ref{thm:pfaffian-intro}]
Let $k$ be a field of characteristic different from $2$
and let $F(x)=\Pf(A)$ be the Pfaffian of an alternating $6\times 6$ matrix of linear forms in $x_0,\dots ,x_3$ over $k$.
We may write
\[
 F(x)=\Pf\!\left(\sum_{i=0}^3x_iA_i\right),
\]
where the $A_i$ are alternating $6\times 6$ matrices over $k$. 
The theorem is trivial if $F$ is identical to zero.
We may therefore assume that $S=\{F=0\}\subset \CP^3_k$ is a cubic surface. 

We first deal with the case where 
$(A_0,A_1,A_2)\in\cT^\circ(k)$. Then Lemma~\ref{lem:incidence} gives
an extension $L/k$ of degree at most two and a three-space
$W\subset L^6$ isotropic for $A_0,A_1,A_2$.

The alternating form $A_3|_W$ has a nonzero radical vector $v\in W$,
since $\dim W=3$. Consequently all four linear forms 
\[
 A_0v,\ A_1v,\ A_2v,\ A_3v
\]
belong to the three-dimensional subspace $\Ann(W)\subset(L^6)^*$ 
of linear forms $\lambda\colon L^6\to L$ that vanish on $W$.
Choose a nontrivial relation $\sum c_i (A_i v)=0$ with $c_i\in L$. Since $\bigl(\sum c_i A_i\bigr) v=0$, 
the alternating matrix $\sum c_iA_i$ is 
degenerate, 
so
\[
 F(c_0,c_1,c_2,c_3)^2
   =\det\!\left(\sum_{i=0}^3c_iA_i\right)=0.
\]
This gives an $L$-point of $F=0$. Its image in $S$ has degree at most two,
and Lemma~\ref{lem:secant} (whose proof in fact applies over any field of 
characteristic different from $2$) gives a $k$-point.

For an arbitrary triple $(A_0,A_1,A_2)\in\cT(k)$, 
use the fixed matrices~\eqref{eq:triple} and
put
\[
 A_i(t)=(1-t)A_i+tB_i\quad(0\le i\le2),
 \qquad A_3(t)=A_3,
 \qquad F_t(x)=\Pf\!\left(\sum_{i=0}^3x_iA_i(t)\right).
\]
The generic triple belongs to $\cT^\circ$ because its value at $t=1$
does; moreover $F_t\ne0$ over $k(t)$ since $F_0=F\ne0$.
The preceding argument over $k(t)$ gives a $k(t)$-point of $\{F_t=0\}$.
Scale its homogeneous coordinates into the discrete valuation ring
$k[t]_{(t)}$, with at least one a unit. Reduction modulo $t$ then gives
a $k$-point of $\{F=0\}$.
\end{proof}

\subsection{Five points and the characteristic-zero theorem}\label{sec:five}

\begin{lemma}[Beauville] 
\label{lem:presentation}
Let $Z\subset\PP^3_k$ be a geometrically reduced subscheme of degree
five whose geometric points are in general linear position.
Then every cubic form $F\in k[x_0,\dots,x_3]$ vanishing along $Z$
is the Pfaffian of an alternating $6\times 6$ matrix 
$A=(a_{ij})$ of linear forms $a_{ij}\in k[x_0,\dots ,x_3]$.   
\end{lemma}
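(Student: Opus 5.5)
We plan to follow Beauville's approach through the Buchsbaum--Eisenbud structure theorem for Gorenstein codimension-three ideals. Set $R=k[x_0,\dots,x_3]$. Five points in general linear position in $\PP^3$ are projectively equivalent over $\bk$ to the standard frame. Their homogeneous ideal $I_Z$ is arithmetically Cohen--Macaulay of codimension three, and the coordinate ring $R/I_Z$ has $h$-vector $(1,3,1)$, which is symmetric. Because of this symmetry and the general-position hypothesis, $R/I_Z$ is Gorenstein; for the frame this can be checked directly, and the Gorenstein property descends from $\bk$ to $k$. The Hilbert function forces $I_Z$ to be generated by the five quadrics through $Z$. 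By Buchsbaum--Eisenbud, $I_Z$ therefore has a minimal free resolution
\[
0\to R(-5)\to R(-3)^5\xrightarrow{\;M\;} R(-2)^5\to R\to R/I_Z\to 0,
\]
where $M$ is a $5\times5$ alternating matrix of linear forms, and the quadrics $q_1,\dots,q_5$ are its $4\times4$ Pfaffians. All of this can be carried out over $k$, since the resolution exists over $k$ and minimal resolutions are compatible with flat base change.

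Next, let $F$ be a cubic vanishing on $Z$. Then $F\in (I_Z)_3$, so we can write $F=\sum_{i=1}^5 \ell_i q_i$ with linear forms $\ell_i\in R_1$ defined over $k$. We form the $6\times6$ alternating matrix
\[
A=\begin{pmatrix} M & \ell\\ -\ell^{T} & 0\end{pmatrix},
\]
adjusting signs as needed. Expanding the Pfaffian along the last row and column gives $\Pf(A)=\sum_i \pm \ell_i\,\Pf_i(M)$, where $\Pf_i(M)$ is the Pfaffian of $M$ with row and column $i$ deleted. After rescaling the $\ell_i$ by signs, this equals $\sum \ell_i q_i=F$.

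The main obstacle is the first step: establishing, over $k$ itself, that $R/I_Z$ is Gorenstein with generators exactly the five quadrics, and that the Buchsbaum--Eisenbud alternating matrix can be chosen with linear entries over $k$. For the structure over $\bk$ we would argue as follows. After a projective transformation $Z_{\bk}$ is the standard frame. Its ideal is spanned by the five quadrics $x_ix_j-x_ix_4$-type relations in suitable coordinates, and one can write down an explicit $5\times5$ linear skew matrix whose Pfaffians generate it. Alternatively we would use the symmetric $h$-vector of a level algebra together with the Cayley--Bacharach property: general position implies that any four of the points impose independent conditions on linear forms, so the socle is one-dimensional. Since the property ``the socle is one-dimensional'' is invariant under field extension, and the Buchsbaum--Eisenbud theorem holds over any Noetherian local or graded ring, the construction over $k$ then follows.
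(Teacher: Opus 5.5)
Your proposal is correct and follows essentially the same route as the paper: reduce over $\bk$ to the standard frame, get Gorensteinness and the Pfaffian resolution from Buchsbaum--Eisenbud, descend the Betti numbers and the Gorenstein property to $k$ and invoke the graded structure theorem there, write $F=\sum_i\ell_iq_i$, and border the $5\times5$ alternating matrix by $\ell$. The paper makes the $\bk$-step concrete by exhibiting the generators $x_ix_j-x_0x_1$ as signed Pfaffians of an explicit alternating matrix $T'$, so that Theorem~2.1(1) of Buchsbaum--Eisenbud applies directly. One small correction: generation of $I_Z$ by quadrics comes from Gorensteinness (or that explicit computation) together with general position, not from the Hilbert function alone. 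Three collinear points plus two further points have the same $h$-vector $(1,3,1)$, but every quadric through them contains the line.
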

 
\begin{proof} 
For a smooth cubic surface (which is the only case needed in this section), this is \cite[Proposition~7.2 and Examples 7.4]{Beauville}. 
The alternative approach suggested in \cite[Remark 7.3(b)]{Beauville}
works for any cubic form. 
For clarity, we provide the proof of this alternative version below. See also
Tanturri \cite[Section~2]{Tanturri}, who gives an explicit algorithm for this Pfaffian presentation starting from five $k$-rational points in general linear position.
  
  Put $S=k[x_0,x_1,x_2,x_3]$, and let $I_Z\subset S$ be the homogeneous
  ideal of $Z$. We claim that the quotient ring $S/I_Z$ is Gorenstein, with graded free resolution
\begin{equation}\label{eq:resolution}
 0\longrightarrow S(-5)\xrightarrow{\ q^t\ } S(-3)^5
  \xrightarrow{\ T\ } S(-2)^5 
 \xrightarrow{\ q\ } S \longrightarrow S/I_Z\longrightarrow 0,
\end{equation}
where $T$ is a $5\times 5$ alternating matrix of linear forms,
\[
 q_i=(-1)^{i+1}\Pf(T_{\widehat i})\qquad(1\le i\le5)
\]
are the signed Pfaffians of the principal submatrices $T_{\widehat i}$ of
$T$, and $q = (q_1,q_2,q_3,q_4,q_5)$.

Let us first prove these statements over an algebraic closure $\bk$. 
The five geometric points can be moved to
$[e_0],[e_1],[e_2],[e_3],[1:1:1:1]$. It is easy to see that the ideal $I_Z\otimes_k\bk \subset S\otimes_k\bk$ is
generated by the following five quadratic polynomials: 
\begin{displaymath}
  \begin{aligned}
    q'_1&=x_0x_2-x_0x_1, &\qquad q'_2&=x_0x_3-x_0x_1,\\
    q'_3&=x_1x_2-x_0x_1, &\qquad q'_4&=x_1x_3-x_0x_1,\\
    q'_5&=x_2x_3-x_0x_1.
  \end{aligned}
\end{displaymath}
These are the signed principal Pfaffians of the alternating matrix
\begin{displaymath}
  T' =
  \begin{pmatrix}
    0 & -x_1 & -x_3 & 0 & x_1 \\
    x_1 & 0 & 0 & x_2 & -x_1 \\
    x_3 & 0 & 0 & x_0 & -x_0 \\
    0 & -x_2 & -x_0 & 0 & x_0 \\
    -x_1 & x_1 & x_0 & -x_0 & 0
  \end{pmatrix}
\end{displaymath}
Put $\bar S=S\otimes_k\bk$ and $q'=(q'_1,\ldots,q'_5)$.
Since the signed principal
Pfaffians of $T'$ generate the height-three ideal $I_Z\bar S$,
the Buchsbaum--Eisenbud structure theorem 
for codimension~$3$ Gorenstein ideals
\cite[Theorem~2.1(1)]{BE} shows that
$\bar S/I_Z\bar S$ is Gorenstein and that the associated
Pfaffian complex \eqref{eq:resolution} (with $q',T'$ in place of $q,T$) is a free resolution. 

Graded Betti numbers and Gorensteinness descend under field extensions.
Thus $S/I_Z$ is Gorenstein and its minimal graded free
resolution has the same ranks and degree shifts.

By the graded version of 
\cite[Theorem~2.1(2) and its proof]{BE} (cf.~the discussion about the graded version on \cite[page 466]{BE}), this resolution
can be chosen to be a Pfaffian resolution as in  \eqref{eq:resolution}.
Its middle map $S(-3)^5\to S(-2)^5$ is therefore represented
by a $5\times5$ alternating matrix $T$ of linear forms
and the quadrics $q_i=(-1)^{i+1}\Pf(T_{\widehat i})$ and
$q=(q_1,\ldots,q_5)$ make \eqref{eq:resolution} exact (over~$k$).

In particular, $I_Z$ is generated by $q_1,\dots q_5$.
If $F$ is a cubic vanishing on $Z$, we may thus write
$F=\sum_{i=1}^{5}\ell_i q_i$ with linear forms $\ell_i$ over $k$.
Expanding along the last column gives
\begin{equation}\tag{3.4}
 F=\operatorname{Pf}\begin{pmatrix}
 T & \ell^t \\
 -\ell & 0
 \end{pmatrix},
 \qquad
 \ell=(\ell_1,\ldots,\ell_5) ,
\end{equation}
as we want.  
\end{proof}

\begin{proof}[Proof of Theorem \ref{thm:five}]
If the cycle has a point of degree one or two in its support, apply
Lemma~\ref{lem:secant}. Otherwise it is a single closed point $P$ of
degree five. The coplanar case is Lemma~\ref{lem:position}\eqref{item:lem:position:1}.
In the remaining case Lemma~\ref{lem:position}\eqref{item:lem:position:2} gives general linear
position, Lemma~\ref{lem:presentation} supplies a Pfaffian presentation
over $k$, and Theorem~\ref{thm:pfaffian-intro} gives a rational point.
\end{proof}

\section{The degree-four argument: nets of quadrics} 
\label{sec:degree-four}
Combining Theorem \ref{thm:five} and Proposition \ref{prop:four-five} proves Theorem \ref{thm:deg=4-intro} stated in the introduction, which assumes $\operatorname{char}(k)=0$.
In this section we give a second proof, which naturally works over any perfect field and yields the following theorem.

\begin{theorem}\label{thm:4->1}
Let $S\subset\PP^{3}_k$ be a smooth cubic surface over a perfect
field. If $S$ has a closed point $P$ of degree four, then $S(k)\ne\varnothing$.
\end{theorem}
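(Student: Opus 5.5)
We follow the strategy sketched in the introduction. Assume $S(k)=\varnothing$; we derive a contradiction. By Lemma~\ref{lem:position}\eqref{item:lem:position:1} the point $P$ is not contained in a $k$-plane, so by Lemma~\ref{lem:position}\eqref{item:lem:position:2} its four geometric points $p_1,\dots,p_4$ span $\PP^3$. We may take them to be the coordinate points over $\bk$.

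The first step is to choose, Galois-equivariantly, a tangent direction $v_i\in T_{p_i}S$ at each $p_i$. Since $k$ is perfect, $\kappa(P)/k$ is separable, and a tangent vector at $P$ is a $\kappa(P)$-point of the projectivized tangent plane $\PP(T_PS)\cong\PP^1_{\kappa(P)}$, which certainly exists. This gives a curvilinear subscheme $Z\subset S$ of length $8$, supported on $P$, defined over $k$. The quadrics containing $Z$ form a space of dimension at least $10-8=2$. To get a net we choose the direction more carefully. Over $\bk$, with the $p_i$ at coordinate points, the quadrics through $p_1,\dots,p_4$ are spanned by the six monomials $x_ix_j$ with $i<j$. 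Imposing a tangent direction at $p_i$ is one linear condition on the three coefficients of $x_ix_j$ for $j\ne i$, so the conditions are four linear forms in six unknowns. We must show the choice can be made so that these four forms have rank $3$, i.e.\ they satisfy one dependency; equivalently, $Z$ imposes only $7$ conditions on quadrics. Alternatively, we pick the tangent line at $P$ inside the tangent plane to $S$ as the intersection with a suitable $\kappa(P)$-plane: the condition ``the four forms are dependent'' is a single equation on $\PP(T_PS)$, and we want it to have a $\kappa(P)$-rational solution. This is the step I expect to be the main obstacle. What I would try first is to exploit the cubic $F$ itself, e.g.\ take $v_i$ tangent to the conic cut on $S$ together with the twisted cubic $\Gamma_0$ through $P$ provided by the proof of Proposition~\ref{prop:four-five}, so that rationality of the direction is automatic and the dependency can be checked by a direct coordinate computation. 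This yields a net $\langle Q_1,Q_2,Q_3\rangle$ of quadrics over $k$ containing $Z$.

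Next we split according to the base locus $B$ of the net.

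\emph{Case 1: $B$ is zero-dimensional.} Then $Q_1,Q_2,Q_3$ form a complete intersection of length $8$ containing $Z$, so $B=Z$. Lemma~\ref{lem:net-of-quadrics} gives $F=H_1Q_1+H_2Q_2+H_3Q_3$ with linear forms $H_i$ over $k$. The three linear forms $H_i$ have a common zero in $\PP^3(k)$, and any such point lies on $S$.

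\emph{Case 2: $B$ has a positive-dimensional component.} Three quadrics through the four spanning points in general position with a common curve: this curve has degree at most $3$ and is defined over $k$ as the one-dimensional part of $B$. If $B$ contains a line, a conic, or a plane curve, Galois invariance produces a $k$-line or $k$-plane meeting $S$ in a zero-cycle of small degree, or containing $P$. Lemmas~\ref{lem:secant} and~\ref{lem:position}\eqref{item:lem:position:1} then give a rational point. I would organize this by the degree of the $k$-rational curve part together with Bézout residuals. The remaining possibility is that $B$ is a geometrically integral twisted cubic $\Gamma$; by Lemma~\ref{lem:twisted-cubic} we have $\Gamma\cong\PP^1_k$.

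\emph{Case 2a: $\Gamma\subset S$.} Then $S(k)\supset\Gamma(k)\neq\varnothing$.

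\emph{Case 2b: $\Gamma\not\subset S$.} Then $\Gamma\cap S$ has length $9$ and contains $Z$, which has length $8$, since $Z\subset\Gamma$ scheme-theoretically. This requires $\Gamma$ to contain the tangent directions of $Z$, which holds because $Z$ lies in the base scheme. The residual is a zero-cycle of degree $1$, hence a $k$-point of $S$.
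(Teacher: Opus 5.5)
Your overall architecture matches the paper's second proof: a $k$-rational length-$8$ tangent scheme $Z$ with $h^0(\mathcal I_Z(2))\ge 3$; then a finite base locus gives $F=\sum H_iQ_i$, while a twisted cubic base locus gives either $\Gamma\subset S$ or a residual point. But there is a genuine gap at exactly the step you flag: you never produce the tangent direction. The suggestion via ``the conic cut on $S$ together with $\Gamma_0$'' is neither made precise nor checked.

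The framing is also off. Dependence of the functionals $Q\mapsto d_{p_i}Q(v_i)$ couples all four conjugate directions. So it is not an equation on $\PP(T_PS)\cong\PP^1_{\kappa(P)}$ whose $\kappa(P)$-rational root you must find. Over $\bk$, the rank-$\le 3$ condition on the $4\times 6$ matrix typically cuts out a curve in $\prod_i\PP(T_{p_i}S)\cong(\PP^1)^4$, not a hypersurface.

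The missing idea is linear algebra plus trace duality. Put $L=\kappa(P)$, $V=H^0(\PP^3_k,\mathcal I_P(2))$ with $\dim_kV=6$, and $E=\Omega^1_S(2)|_P$ with $\dim_kE=8$. Let $\varphi\colon V\to E$ be the first-order restriction.
\begin{itemize}
\item Since $8>6$, there is $0\ne\lambda\in\operatorname{Hom}_k(E,k)$ with $\lambda\circ\varphi=0$.
\item The isomorphism $\operatorname{Hom}_L(E,L)\to\operatorname{Hom}_k(E,k)$, $f\mapsto\Tr_{L/k}\circ f$, writes $\lambda=\Tr_{L/k}\circ f$ for an $L$-linear surjection $f\colon E\to L$.
\item This $f$ is a $\kappa(P)$-rational tangent direction, i.e.\ a $k$-subscheme $Z$.
\item $H^0(\mathcal I_Z(2))=\ker(f\circ\varphi)$ has dimension at least $3$, because $f\circ\varphi$ takes values in the $3$-dimensional $k$-space $\ker\Tr_{L/k}$.
\end{itemize}

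Case~2 is also only a sketch. Your claim that Galois invariance yields a $k$-line or $k$-plane does not obviously cover, for instance, the base locus $\langle p_0,p_1\rangle\cup\langle p_2,p_3\rangle$. These are two skew lines swapped by Galois, neither defined over $k$, and no $k$-plane contains $P$. This occurs when $v_0,v_1$ point along $\langle p_0,p_1\rangle$ and $v_2,v_3$ along $\langle p_2,p_3\rangle$. There you need that these lines lie on $S$ (each meets $S$ in length $\ge 4$), and then the $k$-line $\langle a,\sigma(a)\rangle$ for a point $a$ on one of them.

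The paper avoids a general classification of base loci. It uses the relation space $\{\lambda:\lambda_iu_{ij}=-\lambda_ju_{ji}\}$, whose Galois stability forces a relation with all $\lambda_i\ne0$. Rescaling the $v_i$ by these $\lambda_i$ makes $U=(u_{ij})$ skew-symmetric. Then:
\begin{itemize}
\item If some off-diagonal entries vanish, the nonzero ones form a Galois-regular graph on four vertices. This is either a perfect matching, handled as above, or a $4$-cycle, where explicit quadrics show the base locus is finite.
\item If all $u_{ij}\ne0$, an explicit net shows the base locus is $Z$ or a twisted cubic, according to whether $\Delta=abef-acdf+bcde$ is nonzero.
\end{itemize}
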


\subsection{A rational-point criterion from three quadrics} \label{subsec:3-quadrics-criterion}

Let $S=\{F=0\}\subset \CP^3_k$ be a smooth cubic surface over a field $k$.
If $S$ has a $k$-rational point $P$, then we may after a change of coordinates assume that $P=[1:0:0:0]$, in which case $F=x_1Q_1+x_2Q_2+x_3Q_3$ for quadratic forms $Q_i$.
Conversely, if 
\begin{align}\label{eq:F=HiQi}
F=H_1Q_1+H_2Q_2+H_3Q_3\in k[x_0,x_1,x_2,x_3]
\end{align}
for linear and quadratic forms $H_i\in k[x_0,x_1,x_2,x_3]$ and $Q_i\in k[x_0,x_1,x_2,x_3]$, respectively, then $S$ contains the linear space $\{H_1=H_2=H_3=0\}\subset \CP^3_k$ 
and hence a rational point.

Our strategy is to find a decomposition as in \eqref{eq:F=HiQi}.

\begin{lemma} \label{lem:net-of-quadrics}
Let $S=\{F=0\}\subset \CP^3_k$ be a smooth cubic surface over a field $k$.
Let $Z\subset S$ be a zero-dimensional subscheme of length 8 and assume that there are three quadratic forms $Q_1,Q_2,Q_3\in k[x_0,x_1,x_2,x_3]$ such that $W\coloneq \{Q_1=Q_2=Q_3=0\}\subset \CP^3_k$ has dimension zero and contains $Z$.
Then there are linear forms $H_i\in k[x_0,x_1,x_2,x_3]$ such that \eqref{eq:F=HiQi} holds. 
\end{lemma}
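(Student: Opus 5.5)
Since $W$ is zero-dimensional and cut out by three quadrics in $\PP^3_k$, the forms $Q_1,Q_2,Q_3$ form a regular sequence in $R=k[x_0,\dots,x_3]$. So $W$ is a complete intersection of length $8$ by B\'ezout. As $Z\subset W$ and both have length $8$, we get $Z=W$ as subschemes. The plan is to show that $F$ lies in the homogeneous ideal $I=(Q_1,Q_2,Q_3)$. Then comparing degrees writes $F=\sum H_iQ_i$ with $H_i$ homogeneous linear forms: take the degree-$3$ parts of any expression $F=\sum G_iQ_i$.

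First, $F$ vanishes on $Z=W$ scheme-theoretically, because $Z\subset S$. Hence $F$ lies in the saturation $I^{\mathrm{sat}}=I_W$. Second, $I$ is saturated. The Koszul complex on the regular sequence $Q_1,Q_2,Q_3$ is a free resolution
\[
0\to R(-6)\to R(-4)^3\to R(-2)^3\to R\to R/I\to 0
\]
of length $3<4$. By Auslander--Buchsbaum, $R/I$ has depth $1$, so it is Cohen--Macaulay of dimension $1$. Thus the irrelevant ideal is not an associated prime of $R/I$, i.e.\ $H^0_{\mathfrak m}(R/I)=0$, and $I=I^{\mathrm{sat}}$. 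It follows that $F\in I_W=I$, and $F$ is a degree-$3$ element of $I$, which gives the decomposition \eqref{eq:F=HiQi}.

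Alternatively, I would check directly the equality $H^0(\PP^3,\mathcal I_W(3))=I_3$ from the sheafified Koszul complex. It suffices that $H^1$ and $H^2$ of $\OO(-1)$, $\OO(-3)$, $\OO(1-6)$ vanish on $\PP^3$, which holds.

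The main obstacle is justifying $Z=W$ scheme-theoretically. Here one needs length$(W)=8$ exactly: B\'ezout for a zero-dimensional complete intersection of three quadrics over $k$, counted after base change to $\bar k$. The inclusion $Z\subset W$ of finite schemes of equal length is then an equality. Smoothness of $S$ is not even needed beyond $F\neq0$.
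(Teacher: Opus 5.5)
Your proof is correct and follows essentially the same route as the paper. There too one gets $Z=W$ from the length count $2^3=8$ and then uses the Koszul resolution of the regular sequence $Q_1,Q_2,Q_3$, in its sheafified form twisted by $\OO(3)$ together with $H^1(\PP^3,\OO(-1))=H^2(\PP^3,\OO(-3))=0$; this is exactly your ``alternative'', where the extra $\OO(1-6)$ in your list is spurious but harmless. Your main line repackages this step algebraically, as saturatedness of the complete-intersection ideal via Auslander--Buchsbaum, which is equally valid and even gives $(Q_1,Q_2,Q_3)=I_W$ in every degree.
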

\begin{proof}
Since $W$ has dimension zero and contains $Z$, we must have $W=Z$, because $W$ has length $2^3=8$.
Hence, $\mathcal I_Z=(Q_1,Q_2,Q_3)$ and $Q_1,Q_2,Q_3$ form a regular sequence. 
The Koszul complex therefore yields the exact sequence
\begin{align} \label{eq:koszul}
0 \longrightarrow \mathcal O(-6) \longrightarrow \mathcal O(-4)^{\oplus 3} \longrightarrow \mathcal O(-2)^{\oplus 3} \longrightarrow  \mathcal I_{Z } \longrightarrow 0 
\end{align}
on $\CP^3$. 
Twisting with $\mathcal O(3)$, this yields the exact sequence
\[
0 \longrightarrow \mathcal O(-3) \longrightarrow \mathcal O(-1)^{\oplus 3} \longrightarrow \mathcal O(1)^{\oplus 3} \longrightarrow  \mathcal I_{Z}(3) \longrightarrow 0.
\]
Since $H^{i+1}(\CP^3,\mathcal O(-3))=H^i(\CP^3,\mathcal O(-1))=0$ for $i=0,1$, we conclude that 
$$
H^0(\CP^3,\mathcal O(1)^{\oplus 3} )\stackrel{\cong}\longrightarrow H^0(\CP^3, \mathcal I_{Z}(3)),\qquad (H_1,H_2,H_3)\mapsto \sum_{i=1}^3 H_iQ_i
$$
is an isomorphism.
Finally, since $Z\subset S$, the cubic form $F$ yields a global section of $\mathcal I_{Z}(3)$ and we obtain a decomposition as in \eqref{eq:F=HiQi}.
\end{proof}

\begin{remark} \label{rem:h^0(I_Z)=3}
Twisting \eqref{eq:koszul} with $\mathcal O(2)$ and taking cohomology, one obtains similarly that $H^0(\CP^3, \mathcal I_{Z}(2))\cong H^0(\CP^3,\mathcal O^{\oplus 3} )$ has dimension 3 in the setting of Lemma \ref{lem:net-of-quadrics}. 
\end{remark}

\begin{proposition} \label{prop:existence-of-point}
Let $S=\{F=0\}$ be a smooth cubic surface over a field $k$.
Let $Z\subset S$ be a zero-dimensional subscheme of length 8.
Assume that the base locus of the linear series $|\mathcal I_Z(2)|$ on $\CP^3$ is zero-dimensional.
Then $S$ admits a rational point.
\end{proposition}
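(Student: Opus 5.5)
The plan is to reduce Proposition \ref{prop:existence-of-point} to Lemma \ref{lem:net-of-quadrics}. We look for three quadratic forms $Q_1,Q_2,Q_3\in H^0(\CP^3_k,\mathcal I_Z(2))$, defined over $k$, whose common zero locus $W$ is zero-dimensional. Lemma \ref{lem:net-of-quadrics} then gives linear forms $H_i$ over $k$ with $F=H_1Q_1+H_2Q_2+H_3Q_3$. As explained before that lemma, the linear space $\{H_1=H_2=H_3=0\}\subset\CP^3_k$ is nonempty (three linear equations in four variables) and defined over $k$, and it lies on $S$. So $S(k)\neq\varnothing$.

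To find the $Q_i$, we first check that $V\coloneq H^0(\CP^3_k,\mathcal I_Z(2))$ is large enough. Since $h^0(\OO(2))=10$ and $Z$ has length $8$, we get $\dim V\ge 2$. The hypothesis that the base locus $\operatorname{Bs}|V|$ is zero-dimensional, in particular nonempty-free of curves, forces $\dim V\ge 3$: a pencil of quadrics has base locus of dimension at least one. We then show that a general triple $(Q_1,Q_2,Q_3)\in V^3$ cuts out a zero-dimensional scheme, by a standard Bertini-type dimension count carried out over $\bk$. Choose $Q_1\in V$ general; it is a surface, since $V\ne 0$. Choose $Q_2$ avoiding the finitely many linear conditions ``$Q_2$ vanishes on an irreducible component of $\{Q_1=0\}$''. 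This is possible because no such component lies in the base locus, which is zero-dimensional. Then $\{Q_1=Q_2=0\}$ is a curve. Similarly, choose $Q_3$ not vanishing identically on any one-dimensional component of that curve. Each excluded set is a proper linear subspace of $V\otimes\bk$, so the good triples form a nonempty Zariski-open subset $U$ of the affine space $V^3$.

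The step I expect to be the main obstacle is producing a $k$-point of $U$ when $k$ is finite. For infinite $k$, $k$-points are Zariski dense in $V^3\cong\A^N_k$, so $U(k)\ne\varnothing$ and we are done. For finite $k$ I would instead invoke the Chevalley--Warning theorem: a cubic form in four variables over a finite field has a nontrivial zero, so $S(k)\neq\varnothing$ directly. Alternatively, and more in the spirit of the paper, one can pass to two extensions of $k$ of coprime degrees prime to $3$, run the argument there, and apply Lemma \ref{lem:secant}-type descent. Chevalley--Warning is the cleanest choice.

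One subtlety to verify is that in Lemma \ref{lem:net-of-quadrics} we need $W\supset Z$ with $W$ zero-dimensional. This holds automatically, since the $Q_i$ lie in $V$ and hence vanish on $Z$.
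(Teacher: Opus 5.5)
Your proof is correct. Its skeleton is the paper's: produce three quadrics $Q_1,Q_2,Q_3\in V=H^0(\CP^3_k,\mathcal I_Z(2))$ over $k$ with zero-dimensional intersection, then apply Lemma~\ref{lem:net-of-quadrics}. The difference is in how you get such a triple over $k$, which is exactly the step you flagged as the main obstacle.

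The paper also chooses, after passing to an infinite extension, three members of the system with zero-dimensional intersection; your Bertini-type count is what justifies that step. It then applies Remark~\ref{rem:h^0(I_Z)=3}. That intersection has length $8$ and contains $Z$, so it equals $Z$, and the Koszul complex forces $h^0(\CP^3_{\bk},\mathcal I_{Z_{\bk}}(2))=3$. Hence $\dim_k V=3$ exactly. Any $k$-basis of $V$ therefore cuts out precisely the base locus, which is zero-dimensional by hypothesis. This makes the open set $U$, density of $k$-points, and a separate treatment of finite fields all unnecessary.

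Your route leaves $\dim V$ undetermined and pays for it with Chevalley--Warning when $k$ is finite. That is perfectly valid, since a cubic form in four variables over a finite field has a nontrivial zero; it even makes the statement trivial there.

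Two minor points. First, openness of $U$ should come from upper semicontinuity of fibre dimension for the proper incidence $\{(x,Q_1,Q_2,Q_3)\mid Q_i(x)=0\}\to V^3$, which is defined over $k$. Your sequential choices only establish that $U$ is nonempty. Second, the alternative you sketch for finite $k$ would not work as stated. Finite extensions of a finite field are again finite, so the density argument is no more available there. Moreover, Lemma~\ref{lem:secant} only descends points of degree one or two, not arbitrary degree prime to $3$.
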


\begin{proof}
Since the base locus of $|\mathcal I_Z(2)|$ is zero-dimensional, we can find, after base change to an infinite field, three linearly independent quadrics which have zero-dimensional intersection.
Hence, $h^0(\CP^3_k, \mathcal I_{Z}(2))=h^0(\CP^3_{\bar k}, \mathcal I_{Z_{\bar k}}(2))=3$ by Remark \ref{rem:h^0(I_Z)=3}.
It follows that there are three quadratic forms $Q_1,Q_2,Q_3\in k[x_0,x_1,x_2,x_3]$ which satisfy the assumptions in Lemma \ref{lem:net-of-quadrics}.
Hence, $F$ has a decomposition as in \eqref{eq:F=HiQi} and so $S$ has a rational point.
\end{proof}

A necessary condition for the base locus of $|\mathcal I_Z(2)|$ to be zero-dimensional is $h^0(\CP^3,\mathcal I_Z(2))\geq 3$.
We will study this condition and the resulting base loci in the next subsection; since base loci are compatible with the extension of the ground field, it will be enough to treat the geometric case where the ground field is algebraically closed. 

\subsection{Quadrics through four tangent double points: a geometric analysis} \label{sec:quadrics}

 In this subsection we work over an algebraically closed field $k$.
 Let $S\subset \CP^3_k$ be a smooth cubic surface and let $p_0,\dots ,p_3\in S$ be closed points that do not lie on a plane.
 Up to a coordinate change, we may assume that $p_i=[e_i]$ are the four coordinate vertices in $\CP^3_k$, i.e.~the points represented by the standard basis in $k^4$.
 Let $Z=\sqcup_{i=0}^3Z_i$ be a length 8 subscheme of $S$, where $Z_i$ is of length $2$ and supported at the point $p_i$. 
 
We may represent $Z_i$ by $[e_i+\epsilon v_i]$, where $\epsilon^2=0$ and $v_i=(u_{i0},\ldots,u_{i3})$ is a nontrivial tangent direction at $p_i$ with $u_{ii}=0$.
That is, the line $\{[se_i+tv_i]\; \mid \; [s:t]\in \CP^1\} \subset \CP^3$ is contained in the projective tangent space of $S$ at $p_i$. 
In the affine chart $x_i=1$, with coordinates $y_j=x_j/x_i$ for $j\neq i$, the tangent direction $v_i$ may equivalently be described by
\begin{align} \label{def:uij}
v_i=\sum_{j\neq i}u_{ij}\frac{\partial}{\partial y_j} \in T_{p_i}S.
\end{align}  

\begin{lemma}\label{lem:uij-skew-symmetric}
In the above notation, we have
$
h^0\bigl(\CP^3_k,\mathcal I_{Z}(2)\bigr)\geq 3
$
if and only if there are scalars $\lambda_0,\dots,\lambda_3\in k$,
not all zero, such that
\[
\lambda_i u_{ij}=-\lambda_j u_{ji}\qquad \text{for all $0\leq i<j\leq 3$.}
\]
\end{lemma}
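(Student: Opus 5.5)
The plan is to reduce everything to linear algebra on the space of quadratic forms. We compute $H^0(\CP^3_k,\mathcal I_Z(2))$ as the kernel of an explicit linear map and read off when its rank drops.

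First, the condition that a quadratic form $Q=\sum_{i\le j} c_{ij}x_ix_j$ vanishes on $Z_i=[e_i+\epsilon v_i]$ means
\[
Q(e_i+\epsilon v_i)=Q(e_i)+\epsilon\,\nabla Q(e_i)\cdot v_i\equiv 0 \pmod{\epsilon^2}.
\]
So $Q$ vanishes on $Z_i$ if and only if $Q(e_i)=0$ and $\nabla Q(e_i)\cdot v_i=0$. The condition $Q(e_i)=0$ for all $i$ says exactly that $c_{ii}=0$. Hence the quadrics through the support of $Z$ form the $6$-dimensional space of forms $Q=\sum_{i<j}c_{ij}x_ix_j$.

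Next, write $c_{ji}:=c_{ij}$ for $i<j$. For such $Q$ we have $\partial Q/\partial x_j(e_i)=c_{ij}$ for $j\ne i$. Equivalently, in the affine chart $x_i=1$, the linear part of $Q$ is $\sum_{j\ne i}c_{ij}y_j$, which is compatible with \eqref{def:uij}. Since $u_{ii}=0$, the tangency condition at $p_i$ becomes the single linear equation
\[
\ell_i(c)\coloneq \sum_{j\neq i} c_{ij}u_{ij}=0 .
\]
Therefore $H^0(\CP^3_k,\mathcal I_Z(2))$ is the common kernel of the four functionals $\ell_0,\dots,\ell_3$ on $k^6$. Its dimension is $6-\operatorname{rank}(\ell_0,\dots,\ell_3)$, and this is at least $3$ if and only if the $\ell_i$ are linearly dependent.

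Finally, we unwind linear dependence. A relation $\sum_i\lambda_i\ell_i=0$ with $\lambda$ not all zero means that, for each pair $i<j$, the coefficient of $c_{ij}$ vanishes. The variable $c_{ij}$ appears only in $\ell_i$ (with coefficient $u_{ij}$) and in $\ell_j$ (with coefficient $u_{ji}$). So this coefficient is $\lambda_iu_{ij}+\lambda_ju_{ji}$, and the relation holds exactly when $\lambda_iu_{ij}=-\lambda_ju_{ji}$ for all $i<j$, which is the condition in the statement.

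There is no real obstacle here. The only point needing care is the first step, namely checking that the scheme-theoretic condition of vanishing on the length-$2$ scheme $Z_i$ is exactly the vanishing of $Q(e_i)$ together with the single derivative condition along $v_i$.
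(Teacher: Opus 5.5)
Your proposal is correct and follows essentially the same argument as the paper. Both restrict to the six-dimensional space of quadrics $\sum_{i<j}q_{ij}x_ix_j$ through the four vertices, write the tangency conditions as the four functionals $\sum_{j\neq i}q_{ij}u_{ij}$, and unwind their linear dependence coefficient by coefficient. Your first-order expansion of $Q(e_i+\epsilon v_i)$ simply makes explicit the condition $d_{p_i}Q(v_i)=0$ that the paper states directly.
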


\begin{proof}
A quadric through $p_0,\dots,p_3$ has the form
\begin{equation}\label{def:Q}
Q=\sum_{0\leq i<j\leq 3}q_{ij}x_ix_j.
\end{equation}
The space of such quadrics has dimension $\binom{4}{2}=6$.

For convenience, set $q_{ji}=q_{ij}$ whenever $i<j$. 
The quadric $Q$ in \eqref{def:Q} vanishes along $Z_i$ if and only if
$d_{p_i}Q(v_i)=0$. 
Moreover,
\begin{equation}\label{def:dpQ}
d_{p_i}Q(v_i)=\sum_{j\neq i}q_{ij}u_{ij}.
\end{equation}
Thus the quadrics vanishing along $Z$ form the kernel of the linear map
\[
H^0\bigl(\CP^3,\mathcal I_{Z_{\operatorname{red}}}(2)\bigr)\longrightarrow k^4,\qquad 
Q\mapsto
\bigl(d_{p_0}Q(v_0),\dots,d_{p_3}Q(v_3)\bigr) 
\]
from a six-dimensional vector space to $k^4$.
This kernel has dimension at least $3$ if and only if the four linear functionals $Q\mapsto d_{p_i}Q(v_i)$ are linearly dependent. 
This is equivalent to the existence of scalars $\lambda_i$, not all zero, such that
\begin{equation} \label{def:R}
0=\sum_{i=0}^3\lambda_i d_{p_i}Q(v_i)
 =\sum_{i<j}q_{ij}
   \bigl(\lambda_i u_{ij}+\lambda_j u_{ji}\bigr)
\end{equation}
for every $Q$. Since the coefficients $q_{ij}$ are arbitrary, this is equivalent to
$
\lambda_i u_{ij}=-\lambda_j u_{ji}
$
for every $i<j$, as we want.
\end{proof}

The previous lemma gave a necessary and sufficient condition for $h^0(\CP^3,\mathcal I_Z(2))\geq 3$.
We will concentrate on the case $u_{ij}=-u_{ji}$ for all $i,j$, which ensures $h^0(\CP^3,\mathcal I_Z(2))\geq 3$.
We further assume $u_{ij}\neq 0$ for $i\neq j$ to rule out degenerate cases (which will be dealt with later in Proposition \ref{prop:zero-off-diagonal} via a different argument).
The next lemma characterizes the possible base loci in this situation.

\begin{lemma} \label{lem:base-locus}
Assume in the above notation that $u_{ij}=-u_{ji}$ for all $i,j$ and $u_{ij}\neq 0$ for $i\neq j$.
Then the base locus $B$ of the linear series $|\mathcal I_Z(2)|\subset |\mathcal O_{\CP^3}(2)|$ is one of the following:
\begin{enumerate}
    \item $B=Z$, or
    \item $B$ is a twisted cubic in $\CP^3$.
\end{enumerate}
\end{lemma}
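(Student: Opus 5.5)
The plan is to first pin down the net of quadrics explicitly, then split according to whether its base locus $B$ has dimension zero or contains a curve.

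\emph{Step 1: the dimension of the net.} In the notation of Lemma~\ref{lem:uij-skew-symmetric}, $H^0(\CP^3,\mathcal I_Z(2))$ is the space of $Q=\sum_{i<j}q_{ij}x_ix_j$ satisfying the four linear conditions $\sum_{j\neq i}q_{ij}u_{ij}=0$. Since $q$ is symmetric and $u$ is skew, these four conditions sum to zero, so they have rank at most $3$. I will check that the rank is exactly $3$. For instance, the conditions for $i=0,1,2$ are independent: the coefficient $q_{03}$ appears only in the first, $q_{13}$ only in the second and $q_{23}$ only in the third, with the nonzero coefficients $u_{03},u_{13},u_{23}$. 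Hence $h^0(\mathcal I_Z(2))=3$, and the linear series is a net $N$.

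\emph{Step 2: zero-dimensional base locus.} Suppose $B$ is zero-dimensional. Three general members of $N$ then intersect in a zero-dimensional scheme, which must equal $B$ set-theoretically and be a complete intersection of length $8$. It contains $Z$, which also has length $8$. As in the proof of Lemma~\ref{lem:net-of-quadrics}, this gives $B=Z$.

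\emph{Step 3: positive-dimensional base locus.} Suppose $B$ contains a curve, and let $C$ be its one-dimensional part. Two general members of $N$ share no component, since otherwise all of $N$ would consist of reducible quadrics containing a common plane or be too small. So they meet in a curve of degree $4$ containing $C$, and $\deg C\le 4$. If $\deg C=4$, then $C$ is the complete intersection of two quadrics, and only a pencil of quadrics contains it, contradicting $\dim N=3$. It therefore remains to exclude the cases where $C$ contains a line or a plane conic, leaving an irreducible, reduced, nondegenerate cubic, i.e.\ a twisted cubic $\Gamma$. This exclusion is the main obstacle. I would handle it by a direct computation with the explicit conditions, using that the $p_i$ are the coordinate points and $u_{ij}\neq0$.

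\begin{enumerate}
\item \emph{A line $L$ through two of the points, say $p_0,p_1$.} Then $q_{01}=0$ for all $Q\in N$. Combined with the three independent conditions of Step 1, this would cut $N$ down to dimension $2$ unless the extra condition is dependent on them. The explicit linear algebra, using $u_{ij}\neq0$, shows that it is not dependent.
\item \emph{A line through at most one $p_i$, or a conic.} A conic through at most three of the points spans a plane, and containing it forces a further linear condition on the $q_{ij}$ beyond those of Step 1. A line through at most one $p_i$ imposes $3$ conditions on quadrics, which must be compatible with the $4$ pointwise conditions and the $3$ tangency conditions inside the $10$-dimensional space of quadrics. In each configuration, I would write out these conditions explicitly and check that they are incompatible with $\dim N=3$ when all $u_{ij}\ne0$.
\end{enumerate}

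A plane component containing at least three of the $p_i$ cannot lie in every member of $N$, because the coordinate points are not coplanar and the quadrics $x_ix_j$-type members of $N$ separate them.

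\emph{Step 4: the base locus is exactly the twisted cubic.} Once $C=\Gamma$ is a twisted cubic, the quadrics containing $\Gamma$ form a $3$-dimensional space, namely the $2\times2$ minors of a $2\times3$ matrix of linear forms. This space is contained in $N$, hence equal to $N$ by dimension. Since the ideal of $\Gamma$ is generated by these minors, $B=\Gamma$ scheme-theoretically, with no embedded or extra points. This is case~(2) of the lemma.
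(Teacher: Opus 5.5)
There is a genuine gap, and it is in Step 3, which is the real content of the lemma.

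Steps 1, 2 and 4 are sound. Step 1 even gives $h^0(\CP^3,\mathcal I_Z(2))=3$ directly. In Step 4 the inclusion is $N\subseteq H^0(\CP^3,\mathcal I_\Gamma(2))$, not the reverse, but equality by dimension still follows.

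In Step 3 you reduce to excluding lines and plane conics from the base locus, but you only do this for a line through two of the $p_i$. That case is correct: the functional $q_{01}$ is not in the span of the four tangency functionals. For lines through at most one $p_i$, and for conics, you only announce a computation, and it is not a routine count. Take the line through $p_0$ in direction $v_0$. Two of its three conditions on quadrics are already among those defining $N$, so the count reduces to asking whether a further point of that line lies in $B$, which is the question under study. Likewise, the claims that two general members of $N$ share no component and that no plane is a fixed component are only gestured at (``too small'', ``separate them'').

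The missing idea is that $N$ contains, for each $l$, a cone $K_l$ with vertex $p_l$, for example
\[
K_0=\frac{x_1x_2}{u_{12}}-\frac{x_1x_3}{u_{13}}+\frac{x_2x_3}{u_{23}} .
\]
The paper's $Q_1,Q_2,Q_3$ are multiples of $K_3,K_2,K_1$. The base conic of $K_l$ passes through three coordinate points and, since all $u_{ij}\neq0$, contains no coordinate line, so it is smooth. Hence $K_l$ is irreducible, contains no plane, and every line on it passes through $p_l$. This settles each remaining case:
\begin{enumerate}
\item $K_0\cap K_1$ is a curve of degree $4$, and $N$ has no fixed plane.
\item A line in $B$ would lie on every $K_l$, so it would pass through all four $p_l$, which is impossible.
\item A smooth conic in $B$ spans a plane $H$ with $H\cap K_l$ equal to that conic. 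So this section is smooth, and therefore $p_l\notin H$ for all $l$. Then the kernel of $N\to H^0(\OO_H(2))$ has dimension at least $2$. It consists of forms $\ell_H m$ with $m$ vanishing at four noncoplanar points, so $m=0$, a contradiction.
\end{enumerate}
With this your structural argument goes through.

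The paper avoids the case analysis altogether. It parametrizes $\{Q_1=Q_2=0\}$ on the chart $x_0=1$ by $t=x_1$ and restricts $Q_3$ to get $t^2\Delta/(a^2(d-bt)(e-ct))$ with $\Delta=abef-acdf+bcde$. It then reads off a finite base locus when $\Delta\neq0$ and an explicitly parametrized twisted cubic when $\Delta=0$.
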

\begin{proof}
Consider the matrix $U\coloneq(u_{ij})\in k^{4\times 4}$ which encodes the tangent directions of $Z$ at the four coordinate vertices.
For convenience, we rename the $u_{ij}$ by $a,b,c,d,e,f$ and get 
\[
  U=\begin{pmatrix}
  0&a&b&c\\
  -a&0&d&e\\
  -b&-d&0&f\\
  -c&-e&-f&0
  \end{pmatrix}, \qquad \text{with}\quad abcdef\ne 0.
\] 
A direct computation using \eqref{def:dpQ} shows that the quadratic forms
\begin{align*}
  Q_1\coloneq \frac da x_0x_1-\frac db x_0x_2+x_1x_2,\\
  Q_2\coloneq \frac ea x_0x_1-\frac ec x_0x_3+x_1x_3,\\
  Q_3\coloneq \frac fb x_0x_2-\frac fc x_0x_3+x_2x_3,
\end{align*}
are linearly independent and satisfy $d_{p_i}Q_j(v_i)=0$ for all $i=0,1,2,3$ and all $j=1,2,3$.
Hence, $Q_1,Q_2,Q_3\in H^0(\CP^3,\mathcal I_Z(2))$ span a three-dimensional subspace.

We aim to describe the base locus $B'\coloneq \{Q_1=Q_2=Q_3=0\}$.
If this is zero-dimensional, then $B$ must be equal to $Z$.
It thus suffices to show that $B'$ is either zero-dimensional or a twisted cubic.
Indeed, if $B'$ is a twisted cubic, then so is $B$, because zero-dimensional $B$ implies $h^0(\CP^3,\mathcal I_Z(2))=3$ by Remark \ref{rem:h^0(I_Z)=3} (cf.~proof of Proposition \ref{prop:existence-of-point}). 

On the plane $x_0=0$, the common zero-set of these quadrics is given by the coordinate vertices $p_1,p_2,p_3$.
It remains to analyze the affine chart $x_0=1$ and we write $x_1=t$.
The vanishing $Q_1=Q_2=0$ gives 
\begin{align} \label{eq:x2-x3}
  x_2=\frac{bdt}{a(d-bt)}\qquad \text{and} \qquad 
  x_3=\frac{cet}{a(e-ct)}.
\end{align}
One checks that neither denominator can vanish at a point of the base locus $B'$.
Substitution into the third equation gives
\[
 Q_3=\frac{t^2\Delta}{a^2(d-bt)(e-ct)}\qquad \text{where}\quad \Delta\coloneq abef-acdf+bcde.
\]
If $\Delta\ne 0$, the base scheme $B'$ is therefore finite.
In fact, the above equation gives a base scheme of length $2$ on the chart $x_0=1$, which is compatible with the fact that we work away from $p_1,p_2,p_3$.

Suppose now that $\Delta=0$. 
Then the vanishing of $Q_3$ is automatic, once \eqref{eq:x2-x3} holds.
Moreover, $be-cd\ne 0$, since otherwise $\Delta=bcde\ne 0$.
We introduce a second variable $s$, homogenize \eqref{eq:x2-x3} with respect to $s$, and clear denominators to find that the base locus is contained in the projective curve described by
\begin{align*}
  x_0&=a s(ds-bt)(es-ct),\\
  x_1&=a t(ds-bt)(es-ct),\\
  x_2&=bdst(es-ct),\\
  x_3&=cest(ds-bt),
\end{align*}
with $[s:t]\in \CP^1$. 
Note that each $x_i$ is homogeneous of degree $3$ in $s,t$, hence may be viewed as an element of $H^0(\CP^1,\mathcal O(3))$.
We check that the coefficient matrix of $x_0,\dots ,x_3$ in the basis $s^3,s^2t,st^2,t^3$ has determinant $ -a^2b^2c^2d^2e^2(be-cd)\ne 0$.
Hence, the above expressions for $x_0,\dots ,x_3$ form a basis of $H^0(\CP^1,\mathcal O(3))$, hence they parametrize a smooth twisted cubic $\Gamma\subset \CP^3$.

To double check our computation, we substitute the above parametrization of $\Gamma$ into $Q_1$ and $Q_2$ and get zero, while substitution into $Q_3$ gives
\[
  s^2t^2(ds-bt)(es-ct)\Delta=0.
\]
Thus all three quadrics vanish on $\Gamma$.
The ideal sheaf of a twisted cubic is always generated by three quadrics (see \cite[Proposition~6.1]{eisenbud-syzygies}) and so we must have $\mathcal I_\Gamma=(Q_1,Q_2,Q_3)$.
This concludes the proof of the lemma.
\end{proof}

\subsection{First arithmetic consequences}

We first show that the assumption $u_{ij}=-u_{ji}$ used in Lemma \ref{lem:base-locus} is automatic in the arithmetic situation, as follows.

\begin{lemma} \label{lem:uij=-uji}
Let $S\subset\mathbb P^3_k$ be a smooth cubic surface over a perfect field $k$, and let $Z\subset S$ be a
subscheme of length 8 supported at a closed point $P$ of degree $4$ whose geometric points span $\CP^3_{\bar k}$.
If $h^0(\mathbb P^3_k,\mathcal I_Z(2))\geq 3$, then, over $\bar k$, and in coordinates such that $Z$ is supported on the coordinate vertices $p_i=[e_i]$, the tangent directions can be chosen so that $u_{ij}=-u_{ji}$ for all $i,j$ in \eqref{def:uij}.
\end{lemma}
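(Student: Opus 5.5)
The plan is to combine Lemma~\ref{lem:uij-skew-symmetric} with a Galois-symmetry argument. First, the relation from that lemma should be improved to one with all $\lambda_i\neq 0$. Then each tangent vector is rescaled so that the relation becomes exactly $u_{ij}=-u_{ji}$.

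\emph{Step 1: a relation exists over $\bar k$.} Work over $\bar k$ in the coordinates of Section~\ref{sec:quadrics}, with $p_i=[e_i]$ the geometric points of $P$; these span $\CP^3_{\bar k}$ by hypothesis. Since $h^0$ is invariant under field extension, $h^0(\CP^3_{\bar k},\mathcal I_{Z_{\bar k}}(2))\geq 3$. By Lemma~\ref{lem:uij-skew-symmetric}, the four functionals $\varphi_i\colon Q\mapsto d_{p_i}Q(v_i)$ on the six-dimensional space $V\coloneq H^0(\CP^3_{\bar k},\mathcal I_{Z_{\mathrm{red}}}(2))$ are linearly dependent. Let $\Lambda\subset\bar k^4$ be the nonzero space of relations $\lambda$ with $\sum_i\lambda_i\varphi_i=0$.

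\emph{Step 2: intrinsic description and Galois stability.} Each $\varphi_i$ depends on the choice of $v_i$ only up to a nonzero scalar. The line $\bar k\varphi_i\subset V^*$ is intrinsic: it is the annihilator of the hyperplane of quadrics in $V$ containing $Z_i$. Hence the collection of supports $\operatorname{supp}(\lambda)=\{i:\lambda_i\neq0\}$ for $\lambda\in\Lambda$ does not depend on the choice of tangent vectors. Because $k$ is perfect, $P$ has four distinct geometric points, and $\Gal(\bar k/k)$ permutes them transitively. Since $Z$ and $V$ are defined over $k$, an element $\sigma$ maps the line $\bar k\varphi_i$ to $\bar k\varphi_{\sigma(i)}$. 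Applying $\sigma$ to a relation therefore produces a relation supported on $\sigma(\operatorname{supp}\lambda)$. Consequently the union $U$ of all supports of elements of $\Lambda$ is a nonempty Galois-stable subset of $\{0,1,2,3\}$, so $U=\{0,1,2,3\}$.

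\emph{Step 3: a relation with full support, and rescaling.} Each condition $\lambda_i\neq0$ for $i\in U$ defines a nonempty Zariski open subset of the vector space $\Lambda$. Since $\bar k$ is infinite, a general $\lambda\in\Lambda$ satisfies all four conditions at once, so $\lambda_i\neq0$ for every $i$. Now replace $v_i$ by $v_i'=\lambda_iv_i$. This is still a nonzero tangent direction representing $Z_i$, and its coordinates are $u'_{ij}=\lambda_iu_{ij}$. The relation $\lambda_iu_{ij}=-\lambda_ju_{ji}$ from Lemma~\ref{lem:uij-skew-symmetric} then reads $u'_{ij}=-u'_{ji}$, which is the claim.

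\emph{Main obstacle.} The only delicate point is Step~2: the Galois action must be formulated intrinsically, via the lines $\bar k\varphi_i$ rather than the non-canonical vectors $v_i$. Only then is it clear that supports of relations are permuted by $\Gal(\bar k/k)$, and that transitivity on the four geometric points (which uses perfectness of $k$) forces full support.
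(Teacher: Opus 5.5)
Your proposal is correct and follows essentially the same route as the paper. Lemma~\ref{lem:uij-skew-symmetric} gives a nonzero space of relations, and the Galois action $\sigma(f_i)=c_{\sigma,i}f_{\sigma(i)}$ permutes the coordinate hyperplanes transitively, so the relation space lies in none of them. A general relation then has all $\lambda_i\neq 0$, and rescaling $v_i$ by $\lambda_i$ gives $u_{ij}=-u_{ji}$. Your description of $\bar k\varphi_i$ as the annihilator of the hyperplane of quadrics through $Z_i$ is just a cleaner justification of that same formula.
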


\begin{proof}
We work over $\bar k$ and pick coordinates so that the geometric points 
of the support of $Z$ are the four coordinate vertices $p_i=[e_i]$.  
Lemma \ref{lem:uij-skew-symmetric} shows that
\[
R\coloneq \left\{(\lambda_i)\in\bar k^4 \; \mid\; \lambda_i u_{ij}+\lambda_j u_{ji}=0 \text{ for all }i<j\right\}
\]
is nonzero.

Write $f_i(Q)=d_{p_i}Q(v_i)$ for the functionals \eqref{def:dpQ} in Lemma \ref{lem:uij-skew-symmetric}.
By \eqref{def:R}, $R$ is the space of relations $\sum_i\lambda_i f_i=0$.
Since $Z$ is defined over $k$, the natural Galois action on $H^0(\CP^3_{\bar k},\mathcal I_{Z_{\operatorname{red}}}(2))^\vee$ 
satisfies, for all $\sigma\in \Gal(\bar k/k)$,
\[
\sigma(f_i)=c_{\sigma,i}f_{\sigma(i)}
\]
for some $c_{\sigma,i}\in{\bar k}^\times $, where
$\sigma(p_i)=p_{\sigma(i)}$.
Applying $\sigma$ to a relation $\lambda\in R$ therefore induces a semilinear
action on $R$, given explicitly by
\[
(\sigma\cdot\lambda)_{\sigma(i)}
   =c_{\sigma,i}\,\sigma(\lambda_i).
\]
In particular, this action permutes the coordinate hyperplanes
according to the transitive Galois action on the four points.  
Thus, if $R$ were contained in one coordinate hyperplane
$\{\lambda_i=0\}$, it would be contained in all four, contradicting
$R\neq0$.

Since $\bar k$ is infinite, the four proper linear subspaces
$R\cap\{\lambda_i=0\}$ $(i=0,1,2,3)$ cannot cover $R$.
We may therefore choose $(\lambda_i)\in R$ with $\lambda_i\neq0$ for all $i$.
Replacing $v_i$ by $\lambda_i v_i=(\widetilde u_{i0}, \dots, \widetilde u_{i3})$ preserves $Z_i$ and gives $ $
\[
\widetilde u_{ij}=\lambda_i u_{ij}
=-\lambda_j u_{ji}=-\widetilde u_{ji},
\]
as required.
\end{proof}

Lemma \ref{lem:base-locus} describes the base locus in the case where $u_{ij}=-u_{ji}$ is nonzero for $i\neq j$.
The missing case will be handled through the following ad hoc argument.

\begin{proposition}\label{prop:zero-off-diagonal}
Let $S\subset\CP^3_k$ be a smooth cubic surface over a perfect field $k$, and let $P\in S$ be a closed point of degree $4$ whose geometric points span $\CP^3_{\bar k}$.
Let $Z\subset S$ be a subscheme of length $8$ with $Z_{\mathrm{red}}=P$.
Suppose that, after choosing coordinates in which the four geometric
points of $P$ are the coordinate vertices $p_i=[e_i]$, we can write
\[
  Z_{\bar k}=\coprod_{i=0}^3 Z_i,  \qquad Z_i=\{[e_i+\epsilon v_i]\; \mid \; \epsilon^2=0\},  \qquad v_i=\sum_{j=0}^3u_{ij}e_j,
\]
where every $v_i$ is nonzero and $U=(u_{ij})$ is skew-symmetric with $u_{ii}=0$ for all $i$.

If $u_{ij}=0$ for some $i\ne j$, then $S(k)\ne\varnothing$.
\end{proposition}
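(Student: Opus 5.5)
The plan is to use only the Galois symmetry of the zero pattern of $U$; the hypothesis $u_{ij}=0$ should force the Galois group of $P$ into a dihedral group. Once that happens, $P$ splits into two conjugate points of degree $2$ over a quadratic extension, and the secant argument of Lemma~\ref{lem:secant} finishes.

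\emph{Step 1: the zero pattern is intrinsic.} Let $E=\{\{i,j\} : i\neq j,\ u_{ij}=0\}$, viewed as a set of edges of the complete graph on the vertex set $\{0,1,2,3\}$, which is identified with the geometric points of $P$. By skew-symmetry, $E$ is a well-defined set of unordered edges. I will check that $u_{ij}=0$ has a coordinate-free meaning: the tangent line of $Z_i$ at $p_i$ lies in the plane spanned by $p_i$ and the two vertices $p_k$ with $k\notin\{i,j\}$. This condition is unchanged if $v_i$ is rescaled or if the representatives $e_i$ of $p_i$ are rescaled.

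Since $Z$ and $P$ are defined over $k$, each $\sigma\in\Gal(\bar k/k)$ maps $Z_i$ to $Z_{\sigma(i)}$ and the plane $\langle p_i,p_k,p_l\rangle$ to $\langle p_{\sigma(i)},p_{\sigma(k)},p_{\sigma(l)}\rangle$. Hence $E$ is stable under the image $G\subset S_4$ of the Galois group. This image is a transitive subgroup of $S_4$ because $P$ is a closed point of degree $4$.

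\emph{Step 2: excluding edge-transitive groups.} By hypothesis $E\neq\varnothing$. On the other hand $E$ is not all six edges: otherwise $U=0$, and since $u_{ii}=0$ this would force every $v_i=0$, contradicting the assumption that the $v_i$ are nonzero. The transitive subgroups of $S_4$ are $C_4$, $V_4$, $D_4$, $A_4$ and $S_4$. Both $A_4$ and $S_4$ act transitively on the six edges; for $A_4$ this is because the $3$-cycles and the double transpositions move any edge to any other. Therefore $G\in\{C_4,V_4,D_4\}$, so $G$ lies in a $2$-Sylow subgroup of $S_4$. Such a $G$ preserves a partition of $\{0,1,2,3\}$ into two pairs, say $\{0,1\}\sqcup\{2,3\}$. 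The index-$\le 2$ stabilizer of the pair $\{0,1\}$ then corresponds to an extension $K/k$ of degree at most $2$ over which $\{p_0,p_1\}$ is a closed point of degree $2$ of $S_K$ (or a pair of $K$-points).

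\emph{Step 3: secant argument over $K$.} Let $\ell$ be the line through $p_0$ and $p_1$; it is defined over $K$. If $\ell\subset S$, then $\ell\cong\PP^1_K$ has a $K$-point, which is a point of $S$ of degree at most $2$ over $k$. Otherwise $[\ell\cap S]-[p_0]-[p_1]$ is an effective $K$-cycle of degree $1$, that is, a $K$-point of $S$, which again gives a closed point of $S$ of degree at most $2$ over $k$. In either case Lemma~\ref{lem:secant} yields $S(k)\neq\varnothing$. Here $k$ is perfect, as assumed in Section~\ref{sec:preliminaries}.

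\emph{Main obstacle.} The only delicate point is Step 1. One has to make sure that the Galois action, which may rescale the chosen representatives $e_i$ and $v_i$ by nonzero constants $c_{\sigma,i}$ as in the proof of Lemma~\ref{lem:uij=-uji}, really preserves the vanishing of individual entries $u_{ij}$ and not merely some weaker projective condition. The intrinsic plane description above is exactly what handles this, so I would write that identification out carefully. Everything else is group theory of $S_4$ together with Lemma~\ref{lem:secant}.
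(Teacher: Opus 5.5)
Your proof is correct, but it follows a different route from the paper's. The paper looks at the graph of \emph{nonzero} entries and uses Galois transitivity to make it regular of valence $1$ or $2$. It then handles the two cases separately. For valence $1$, it uses the tangent data ($Z_0\sqcup Z_1\subset L_{01}$ has length $4$) to put the conjugate skew lines $L_{01},L_{23}$ on $S$, and gets a point from a secant to $L_{01}$ over the quadratic field of definition. For valence $2$, a four-cycle, it writes down three explicit quadrics through $Z_{\bar k}$ with finite common zero locus and invokes Proposition~\ref{prop:existence-of-point}.

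You instead note that in either case the Galois image in $S_4$ preserves a pairing of the four geometric points, i.e.\ $\kappa(P)$ has a quadratic subfield. You then apply the secant argument of Lemma~\ref{lem:secant} over that quadratic field and once more over $k$. You derive the pairing from the edge-transitivity of $A_4$ and $S_4$. It can also be read off directly, since the Galois-stable zero pattern is either a perfect matching or a $4$-cycle, whose complement is a perfect matching. Your Step~1 check that vanishing of $u_{ij}$ means the tangent line of $Z_i$ lies in the plane $\{x_j=0\}$ is exactly what is needed for Galois-invariance.

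Your argument is shorter and uniform, and it uses nothing about $Z$ beyond the Galois-invariance of its zero pattern. It also shows that the proposition is an instance of the classical fact that a degree-$4$ point whose residue field has a quadratic subfield forces a rational point. In particular, the degenerate case cannot occur at all when the Galois group is $A_4$ or $S_4$.

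The paper's treatment of the four-cycle case stays inside the net-of-quadrics framework of the section. It proves more than needed there, namely that the base locus of $|\mathcal I_Z(2)|$ is zero-dimensional, so that $F=\sum H_iQ_i$.
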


\begin{proof}
Consider the graph $G$ on the set of vertices $\{0,1,2,3\}$ whose edges are the unordered pairs $\{i,j\}$ with $u_{ij}\ne 0$. 
This graph carries a natural action by the absolute Galois group of $k$. 
The Galois action on the four vertices is transitive, so this graph is regular, i.e.~there is a positive integer $n$ such that its valence at each vertex is $n$. 
Since every $v_i$ is nonzero, while some $u_{ij}$ with $i\neq j$ are zero, we find $n\in \{ 1,2\}$. 

Suppose first that $n=1$. 
After relabeling, the two edges are $\{0,1\}$ and $\{2,3\}$. 
For $i\neq j$, write $L_{ij}=\langle p_i,p_j\rangle$ for the line spanned by $p_i$ and $p_j$. 
Then the subscheme $Z_0\sqcup Z_1$ is contained in $L_{01}$, so the restriction of the cubic equation of $S$ to $L_{01}$ vanishes on a subscheme of length $4$.
Hence, $L_{01}\subset S_{\bar k}$. 
A similar argument shows $L_{23}\subset S_{\bar k}$.

The lines $L_{01}$ and $L_{23}$ have no point in common and their union is Galois invariant. 
In fact, the Galois action interchanges the two lines and so their union is defined over $k$ and there is a quadratic extension $K/k$ over which $L_{01}$ and $L_{23}$ are defined.
Cubic surfaces that contain such skew lines over a quadratic extension are rational, hence contain in particular a rational point.
To construct such a point explicitly, choose $a\in L_{01}(K)$ and a generator $\sigma\in \Gal(K/k)$. 
Then the line $ M=\langle a,\sigma(a)\rangle$ is defined over $k$. 
If $M\subset S$, it supplies a $k$-rational point. 
Otherwise the residual intersection point of $M\cap S$ provides such a point. 

Suppose now that $n=2$. 
Then $G$ is the four-cycle graph $C_4$.
After relabeling, we may assume that
\[
  u_{01}=u_{23}=0,
  \qquad b=u_{02},\quad c=u_{03},\quad d=u_{12},\quad e=u_{13},
  \qquad bcde\ne0.
\]
The three quadrics
\[
  Q_1=x_0x_1,\qquad Q_2=x_2x_3,   \qquad   Q_3=\frac{x_0x_2}{b}-\frac{x_0x_3}{c}      -\frac{x_1x_2}{d}+\frac{x_1x_3}{e}
\]
vanish on $Z_{\bar k}$. 
Indeed, they vanish at the four vertices, and direct substitution using \eqref{def:dpQ} gives $d_{p_i}Q_j(v_i)=0$ for every $i,j$.
Moreover,
\[
  \{Q_1=Q_2=0\}=L_{02}\cup L_{03}\cup L_{12}\cup L_{13}.
\]
The form $Q_3$ restricts nontrivially on each of these lines. 
Thus $\{Q_1=Q_2=Q_3=0\}$ is zero-dimensional. 
In particular, the base locus of $|\mathcal I_{Z_{\bar k}}(2)|$ is zero-dimensional.

Formation of the base locus commutes with extension of the ground field, so the base locus of $|\mathcal I_Z(2)|$ over $k$ is also zero-dimensional and Proposition~\ref{prop:existence-of-point} gives $S(k)\ne\varnothing$.
\end{proof}

\subsection{Construction of the length-8 subscheme}
\label{sec:arithmetic-tangent-double-points}

We now construct a subscheme $Z\subset S$ of length $8$ over the ground field by choosing a suitable tangent direction at a closed point of degree $4$. 

\begin{proposition}\label{prop:existence-of-Z}
Let $S\subset \CP^3_k$ be a smooth cubic surface over a perfect field $k$, and let $P\in S$ be a closed point of degree $4$ whose geometric points span $\CP^3_{\bar k}$.
Then there exists a subscheme $Z\subset S$, defined over $k$ and supported at $P$, such that $Z_{\bar k}$ is a disjoint union of four subschemes of length $2$ and
\[
  h^0(\CP^3_k,\mathcal I_Z(2))\geq 3.
\] 
\end{proposition}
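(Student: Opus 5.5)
We will construct $Z$ by choosing a tangent vector at $P$ over the residue field. Put $E=\kappa(P)$, a separable quartic extension of $k$ (we use that $k$ is perfect). A length-$2$ subscheme of $S$ supported at $P$ and defined over $k$ is the same as a nonzero tangent vector $v\in T_PS$, up to $E^\times$-scaling. Here $T_PS$ is a $2$-dimensional $E$-vector space, and over $\bar k$ it decomposes as $\bigoplus_i T_{p_i}S$. Such a choice gives $Z_{\bar k}=\coprod Z_i$, each $Z_i$ of length $2$, and so the task becomes choosing $v$ (equivalently, a point of $\PP(T_PS)(E)\cong\PP^1(E)$) such that the four functionals $f_i(Q)=d_{p_i}Q(v_i)$ on the $6$-dimensional space $V=H^0(\CP^3,\mathcal I_{P}(2))$ become linearly dependent. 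In that case their common kernel has dimension at least $3$, and this kernel is exactly $H^0(\mathcal I_Z(2))$ because $h^0$ commutes with base change.

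To phrase this over $k$, we use the $E$-linear map $\Phi_v\colon V\otimes_k E\to E$, $Q\mapsto d_PQ(v)$. Its $k$-linear trace form gives a $k$-linear map $V\to E$ between $k$-vector spaces of dimensions $6$ and $4$. After base change to $\bar k$ this map becomes $Q\mapsto(f_0(Q),\dots,f_3(Q))$. So the condition is that this map $V\to E$ has rank at most $3$, i.e.\ that $Q\mapsto d_PQ(v)$ fails to be surjective onto $E$. By Lemma~\ref{lem:uij-skew-symmetric} the condition is equivalent to the existence of $\lambda$ with $\lambda_iu_{ij}+\lambda_ju_{ji}=0$, which is a degeneracy condition on the Galois-invariant data. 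Let $v$ vary in $T_PS\cong E^2$. Over $\bar k$ each $v_i$ ranges over a $2$-dimensional space with coordinates $(s_i,t_i)$, and the determinantal condition becomes the vanishing of all $4\times 4$ minors of the $4\times 6$ matrix whose rows $f_i$ are linear in $(s_i,t_i)$. By Lemma~\ref{lem:uij-skew-symmetric} this is equivalent to the consistency of the $6$ equations in the $\lambda_i$, that is, to the existence of $(\lambda_i)\ne0$ with $\lambda_i u_{ij}=-\lambda_ju_{ji}$.

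The cleanest route we would attempt is to choose $v$ so that $U$ is skew-symmetric directly. Each tangent plane $T_{p_i}S$ is a plane through $p_i$ in $\PP^3$. For $i\ne j$, the condition $u_{ij}=-u_{ji}$ for all pairs (up to the scalings $\lambda_i$) is the compatibility condition. Counting parameters: $v$ has $4$ projective degrees of freedom over $\bar k$ (one point of $\PP^1$ per $p_i$), plus $3$ from the scalings $\lambda_i$, giving $7$, while there are $6$ equations. So a $1$-dimensional family of solutions is expected. A cleaner formulation is that $\det$ of the map $V\otimes E\to E\otimes_k \bar k$ is a single condition. The locus of $v\in\PP(T_PS)$ (a Weil restriction $R_{E/k}\PP^1$, of dimension $4$ over $k$) where the rank drops is expected to be a hypersurface, and we would need a $k$-point on it, i.e.\ an $E$-point $v$. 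We would try to make this explicit. Writing $v_i$ in a basis of $T_{p_i}S$, the rank-drop condition should be a multilinear form $D(v_0,v_1,v_2,v_3)$ of degree $(1,1,1,1)$. Indeed, the $4\times4$ minors are multilinear, and Lemma~\ref{lem:uij-skew-symmetric} suggests that they all share the common factor
\[
\Delta'=\prod(\cdot)\text{-type expression } u_{01}u_{23}\cdots,
\]
namely a cycle-consistency condition $u_{01}u_{12}u_{20}\cdots$. The descended form $D$ is then a norm-type form $N_{E/k}$-twisted multilinear form on $R_{E/k}\mathbb A^2$.

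The main obstacle is showing that this Galois-twisted $(1,1,1,1)$-form has a nontrivial zero with $v$ defined over $E$. Our first attempt is to fix $v_0,v_1,v_2$ freely among the Galois-conjugate data and solve linearly for the remaining factor. The problem is that Galois conjugation ties the $v_i$ together, so they cannot be chosen independently. Instead we restrict to $v$ in a suitable one-dimensional $k$-subfamily, for example $v=\alpha w_1+w_2$ with $\alpha\in k$ and $w_1,w_2$ fixed $E$-vectors. Then $D$ becomes a binary quartic in $\alpha$ over $k$, and that need not have a root. A better choice is to exploit the structure of the rank condition. In the explicit case the condition $\lambda_iu_{ij}=-\lambda_ju_{ji}$ is solvable exactly when the ``cocycle'' products $u_{ij}u_{jk}u_{ki}=-u_{ji}u_{kj}u_{ik}$ agree. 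We therefore aim to choose $v_i$ to be the tangent direction along the line $T_{p_i}S\cap\langle p_i,p_j,p_k\rangle$ type intersections, namely $v_i$ with some $u_{ij}=0$ arranged Galois-equivariantly, so that the cocycle condition holds trivially. For instance, take $v_i$ to be the direction of $T_{p_i}S\cap\Pi_i$, where $\Pi_i$ is the face plane opposite to a Galois-equivariantly chosen vertex. This is defined over $E$, which gives $u_{i,\sigma(i)}=0$ and reduces to the setting of Proposition~\ref{prop:zero-off-diagonal}. We would verify that such a degenerate choice always satisfies Lemma~\ref{lem:uij-skew-symmetric}, with $\lambda$ supported suitably, and this is the step we expect to require the most care.
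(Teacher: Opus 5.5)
\textbf{There is a genuine gap: the proposal never produces a suitable tangent direction, and the two steps it relies on are both wrong.}

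Your reformulation is correct. With $E=\kappa(P)$, you need an $E$-line in $T_PS$ along which the $k$-linear map $V\to E$, $Q\mapsto d_PQ(v)$, fails to be surjective. The route you sketch for finding one cannot work, for the following reasons.

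\emph{The rank-drop locus is not a hypersurface.} You are asking a $4\times 6$ matrix to have rank at most $3$, which has expected codimension $(4-3)(6-3)=3$. Your own count ($7$ unknowns, $6$ equations) predicts a curve in the $4$-dimensional $R_{E/k}\PP^1$. So there is no single twisted $(1,1,1,1)$-form $D$ whose zero you need to find.

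\emph{The degenerate choice $v_i\in T_{p_i}S\cap\Pi_i$ fails.}
\begin{itemize}
\item Suppose the Galois group acts on $\{p_0,\dots,p_3\}$ through $A_4$ or $S_4$, which is the generic case. Then the stabilizer of $p_i$ permutes the other three vertices transitively, hence also the three face planes through $p_i$. So none of them is defined over $\kappa(p_i)$, and your $v$ does not exist over $E$.
\item Suppose instead that a Galois-equivariant $\tau$ with $\tau(i)\neq i$ does exist. Then $u_{i\tau(i)}=0$ determines each $v_i$ completely, leaving no freedom. Generically the relations of Lemma~\ref{lem:uij-skew-symmetric} then admit only $\lambda=0$. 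For $\tau$ a $4$-cycle, the relation for $\{i,\tau(i)\}$ becomes $\lambda_{\tau(i)}u_{\tau(i)i}=0$ with $u_{\tau(i)i}\neq 0$. For $\tau=(02)(13)$, the relations amount to the extra condition $u_{01}u_{12}u_{23}u_{30}=u_{10}u_{21}u_{32}u_{03}$, which fails for general tangent planes.
\item Proposition~\ref{prop:zero-off-diagonal} cannot rescue this. It assumes $U$ is already skew-symmetric, so it presupposes $h^0(\mathcal I_Z(2))\geq 3$ rather than producing it.
\end{itemize}

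\emph{The missing idea is to choose the functional first and the direction second.}
\begin{itemize}
\item Let $\varphi\colon V\to\Omega^1_S(2)|_P$ restrict a quadric through $P$ to $S$ and reduce it modulo the square of the ideal of $P$. This is a $k$-linear map from a $6$-dimensional space to an $8$-dimensional one. So there is a nonzero $k$-linear $\lambda\colon\Omega^1_S(2)|_P\to k$ with $\lambda\circ\varphi=0$.
\item Because $E/k$ is separable, $f\mapsto\Tr_{E/k}\circ f$ is a bijection from $E$-linear to $k$-linear functionals on $\Omega^1_S(2)|_P$. Hence $\lambda=\Tr_{E/k}\circ f$ for a nonzero, hence surjective, $E$-linear $f$.
\item After untwisting by $\OO_S(-2)|_P$, $f$ is a one-dimensional $E$-quotient of $\Omega^1_S|_P$, i.e.\ a tangent direction defining $Z$.
\item The map $f\circ\varphi$ takes values in the $3$-dimensional space $\ker\Tr_{E/k}$. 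Therefore $h^0(\mathcal I_Z(2))=\dim_k\ker(f\circ\varphi)\geq 6-3=3$.
\end{itemize}
In your language, the good locus contains the image of the projective space $\PP(\ker\varphi^\vee)$, which has dimension $\geq 1$ over $k$. Its $k$-points come for free, so no search for zeros of a twisted form is needed.
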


\begin{proof}
Put $L=\kappa (P)$, and let $\mathcal J_{P}\subset \mathcal O_S$ be the ideal sheaf of $P$ in $S$. 
Since $S$ is smooth and $L/k$ is separable, the restriction of the cotangent bundle $\Omega^1_{S}\coloneq \Omega^1_{S/k}$ to $P$,
\[
\Omega^1_{S}|_P\coloneq \Omega^1_{S/k}\otimes_{\mathcal O_S} L\cong \mathcal J_P/\mathcal J_P^2 ,
\]
has dimension $2$ over $L$.
The dual of this space is the tangent space $T_{S}|_P$ at $P$.

The choice of a subscheme $Z\subset S$ of length $8$ which is supported at the point $P$ and whose base change to $\bar k$ splits into 4 subschemes of length $2$ is equivalent to the choice of a line in the $L$-vector space $T_{S}|_P$.
Dually, it corresponds to a one-dimensional quotient of $\Omega^1_{S}|_P$.

Set
\[
  E\coloneq \Omega^1_S(2)|_P   \qquad \text{and}   \qquad V\coloneq H^0(\CP^3_k,\mathcal I_P(2)),
\]
where $\mathcal I_P$ is the ideal sheaf of $P$ in $\CP^3_k$.
The four geometric points of $P$ impose independent conditions on quadrics, since they span $\CP^3_{\bar k}$. 
Thus $\dim_k V=6$.
Moreover, $\dim_L E=2$ and $\dim_k E=8$. 
Restricting a quadric which vanishes at $P$ to $S$ and reducing modulo $\mathcal J_P^2$ defines a canonical $k$-linear map
\[
  \varphi \colon V\longrightarrow H^0\bigl(P,(\mathcal J_P/\mathcal J_P^2)\otimes_L \mathcal O_S(2)|_P\bigr)\cong \Omega^1_S(2)|_P ,
\]
where we use that $ \Omega^1_S(2)|_P\cong (\mathcal J_P/\mathcal J_P^2)\otimes_L \mathcal O_S(2)|_P$.

Let us first express the desired property in terms of this map.
Given any $L$-linear quotient $q\colon E\twoheadrightarrow N$ with $\dim_L N=1$, tensoring with $\mathcal O_S(-2)|_P$ yields a one-dimensional quotient of $\Omega^1_S|_P$, hence a length 8 subscheme $Z_q$ supported at $P$ as above.
A quadric $Q\in V$ vanishes on $Z_q$ precisely when $q(\varphi(Q))=0$.
Therefore
\begin{align}\label{eq:arithmetic-first-order-kernel}
  H^0(\CP^3_k,\mathcal I_{Z_q}(2)) =\ker\bigl(q\circ \varphi\colon V\longrightarrow N\bigr).
\end{align}
Since $\dim_k V=6$ and $\dim_k N=4$, the desired inequality $h^0(\CP^3_k,\mathcal I_{Z_q}(2))\geq 3$ is equivalent to $\operatorname{rank}_k(q\circ \varphi)\leq3$.
Equivalently, there must exist a nonzero $k$-linear functional $\mu\colon N\to k$ such that $\mu\circ q\circ \varphi=0$.
Pulling this functional back along $q$ gives a nonzero $k$-linear functional $\lambda\coloneq \mu\circ q\colon E\to k$ satisfying
\begin{equation}\label{eq:lambda}
  \lambda(\varphi(Q))=0\qquad\text{for every }Q\in V.
\end{equation}
We will first choose such a functional $\lambda$ and then recover an $L$-linear one-dimensional quotient of $E$ through which it factors.

Consider the $k$-vector space
\[
  K=\ker \left( \varphi^\vee \colon \operatorname{Hom}_k(E,k)
  \longrightarrow\operatorname{Hom}_k(V,k)
  \right),
  \qquad \varphi^\vee(\lambda)=\lambda\circ \varphi.
\]
The source has dimension $8$ and the target dimension $6$, so $\dim_k K\geq2$. 
We may therefore choose a nontrivial element $\lambda\in K$.
This gives a $k$-linear functional on $E$ which satisfies \eqref{eq:lambda}.

It remains to show that the $k$-linear map $\lambda$ factors through some one-dimensional $L$-linear quotient $q\colon E\to N\cong L$ of $E$. 
To this end, we use the $k$-linear isomorphism
\begin{equation}\label{eq:arithmetic-trace-duality}
  \begin{aligned}
  \Theta:\operatorname{Hom}_L(E,L)
  &\xrightarrow{\ \sim\ }\operatorname{Hom}_k(E,k),\\
  f&\mapsto \operatorname{Tr}_{L/k}\circ f ,
  \end{aligned}
\end{equation}
provided by the trace map. 
(To see that $\Theta$ is an isomorphism of $k$-vector spaces, it suffices to note that it is $k$-linear and injective, while source and target have the same finite dimension over $k$.
To see injectivity, note that any nonzero $L$-linear map $f\colon E\to L$ is surjective and $\operatorname{Tr}_{L/k}$ is surjective because $L/k$ is separable, hence $\operatorname{Tr}_{L/k}\circ f$ is surjective and therefore nonzero as long as $f$ is nonzero.)

It follows that there is a unique nonzero $L$-linear map $f\colon E\to L$ such that $\lambda=\operatorname{Tr}_{L/k}\circ f$.
Since $\lambda$ is nonzero, $f$ must be surjective, and $\lambda$ factors through it; the role of $\mu\colon N\cong L\to k$ above is then taken by the trace map $\operatorname{Tr}_{L/k}$.
As explained above, the length $8$ subscheme that is associated to the one-dimensional quotient $f$ has then the properties asserted in the proposition.
This concludes the proof.
\end{proof}

\subsection{Proof of Theorem \ref{thm:4->1}}

\begin{proof}[Proof of Theorem \ref{thm:4->1}]
By Lemma \ref{lem:position}, we may assume that the geometric points associated to $P$ span $\PP^3_{\bar k}$.
By Proposition \ref{prop:existence-of-Z}, there is a closed subscheme $Z\subset S$ of length $8$ supported on $P$ such that $Z_{\bar k}$ is a disjoint union of four subschemes of length $2$ and such that $h^0(\CP^3,\mathcal I_Z(2))\geq 3$.

We pass to the algebraic closure $\bar k$ of $k$ and choose coordinates on $\CP^3_{\bar k}$ such that the four geometric points $p_0,\dots ,p_3$ that correspond to $P$ agree with the four coordinate vertices: $p_i=[e_i]$.
We may then write $Z_{\bar k}=\sqcup_{i=0}^3Z_i$, where $Z_i$ is the length $2$ subscheme supported at $p_i$ that is given by $[e_i+\epsilon v_i]$ with $\epsilon^2=0$ and $v_i=(u_{i0},\dots,u_{i3})$ with $u_{ii}=0$.
By Lemma \ref{lem:uij=-uji}, we can arrange that $u_{ij}=-u_{ji}$.
By Proposition \ref{prop:zero-off-diagonal}, we may further assume that $u_{ij}$ is nonzero for all $i\neq j$.
It then follows from Lemma \ref{lem:base-locus} that the base locus of $|\mathcal I_{Z_{\bar k}}(2)|$ is either finite or a twisted cubic.
It follows that the base locus of the linear series $|\mathcal I_{Z}(2)|$ on $\CP^3_k$ over the ground field $k$ is either finite or a smooth curve $\Gamma$ whose base change to $\bar k$ is a twisted cubic, hence $\Gamma\cong \CP^1_k$ is a twisted cubic by Lemma \ref{lem:twisted-cubic}.

If the base locus of $|\mathcal I_Z(2)|$ is zero-dimensional, then $S$ has a rational point by Proposition \ref{prop:existence-of-point}.
Otherwise, the base locus is a twisted cubic $\Gamma$. 
If $\Gamma\subset S$, then $S$ has a rational point because $\Gamma\cong \CP_k^1$.
Otherwise, $\Gamma\cap S$ is a zero-dimensional subscheme of length $9$ which contains the length $8$ subscheme $Z$; the residual intersection point then yields a rational point.
This concludes the proof of the theorem.
\end{proof} 

\section{From characteristic zero to positive characteristic}\label{sec:specialization}

We begin with the following.

\begin{theorem}\label{thm:charzero}
A smooth cubic surface over a field of characteristic zero with a
zero-cycle of degree one has a rational point.
\end{theorem}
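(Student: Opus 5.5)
The plan is to combine the reductions of Coray and Voisin with Theorem~\ref{thm:deg=4-intro}, which we now have in two forms: Theorem~\ref{thm:five} together with Proposition~\ref{prop:four-five}, or Theorem~\ref{thm:4->1}. Let $S$ be a smooth cubic surface over a field $k$ of characteristic zero, so $k$ is perfect and all of Section~\ref{sec:preliminaries} applies. Assume $S$ carries a zero-cycle of degree one; we argue that $S(k)\neq\varnothing$.

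\textbf{Step 1: Coray's reduction.} First we pass to a single closed point of small degree. A zero-cycle of degree one forces $S$ to have closed points of degrees whose greatest common divisor is one, so $S$ has a closed point of degree prime to $3$. Coray's theorem \cite[Theorem~7.1]{coray}, valid over perfect fields, then shows that $S$ has either a rational point or a closed point of degree $4$ or $10$. Its mechanism is the classical descent of the minimal degree of a prime-to-$3$ point by residual intersections with lines, conics and twisted cubics, as in Lemmas~\ref{lem:secant} and~\ref{lem:position} and Proposition~\ref{prop:four-five}.

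\textbf{Step 2: the degree-$10$ case.} If $S$ has a closed point of degree $10$, we apply Voisin's result \cite{voisin-coray}, which holds in characteristic zero. It says that $S$ then has a rational point or a closed point of degree $4$.

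\textbf{Step 3: the degree-$4$ case.} A closed point of degree $4$ gives a rational point by Theorem~\ref{thm:4->1}, since $k$ is perfect. Alternatively, Proposition~\ref{prop:four-five} turns it into a rational point or a closed point of degree $5$, and Theorem~\ref{thm:five} finishes.

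There is no computational obstacle here. The only point needing care is Step~1: we must check that Coray's reduction produces a \emph{single} closed point of degree $4$ or $10$, and not merely an effective cycle of that degree. This holds because, once $S(k)=\varnothing$, Lemma~\ref{lem:secant} rules out points of degree $1$ and $2$, and the minimality argument then pins down the support.
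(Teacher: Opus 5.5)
Your proposal is correct and follows the paper's proof: the paper cites Voisin's theorem, which already builds in Coray's reduction, to get a point of degree one or four. It then finishes with Theorem~\ref{thm:4->1}, or with Proposition~\ref{prop:four-five} plus Theorem~\ref{thm:five}, exactly as in your Steps~1--3, so your separate Coray step (degree $4$ or $10$) is just a more explicit unpacking of that same citation.
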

\begin{proof}
Voisin's theorem \cite[Theorem~1.5]{voisin-coray}, which improved 
Coray's reduction \cite[Theorem 7.1]{coray},
gives a point of degree one or four. 
We may then either apply Theorem \ref{thm:4->1}, or
use Proposition~\ref{prop:four-five} to produce 
an effective zero-cycle of degree five and apply 
Theorem~\ref{thm:five}.
\end{proof}

In the remainder of this section, we reduce the case 
of arbitrary base fields 
(including imperfect ones) to the case of characteristic zero.
The required lifting statement follows 
\cite[Section~2.2, Lemmas~2.4--2.5]{ma}; we include its proof.

\begin{lemma}\label{lem:lift}
Let $k$ have characteristic $p>0$, let $S\subset\PP^3_k$ be a smooth
cubic surface, and let $P\subset S$ be a closed point of degree $d$.
There exist a complete discrete valuation ring $R$ of characteristic
zero with residue field $k$, a smooth cubic surface
$\mathscr S\subset\PP^3_R$ with special fiber $S$, and a finite flat
subscheme $\mathscr Z\subset\mathscr S$ of degree $d$ with special
fiber $P$.
\end{lemma}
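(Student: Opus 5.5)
We construct $R$ first, then lift the point $P$, and finally lift the cubic equation so that it contains the lifted point.

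\emph{Choice of $R$.} By Cohen structure theory there is a complete discrete valuation ring $R$ of characteristic zero with maximal ideal $pR$ and residue field $k$ (a Cohen ring of $k$); this works also when $k$ is imperfect. Let $\pi=p$ be a uniformizer.

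\emph{Lifting $P$.} Write $P=\Spec E$ with $[E:k]=d$. We lift the finite $k$-algebra $E$ to a finite flat $R$-algebra $\mathscr E$ of rank $d$ by choosing generators $E=k[y_1,\dots,y_m]/I$ and lifting step by step.
\begin{itemize}
\item A monogenic piece $k(\alpha)$ lifts by lifting the minimal polynomial to a monic polynomial over $R$.
\item A purely inseparable piece $k(\alpha)$ with $\alpha^{p^r}=a$ lifts via the monic polynomial $T^{p^r}-\tilde a$.
\item Iterating along a tower $k\subset k(\alpha_1)\subset\dots\subset E$, with monic polynomials over the previously constructed algebras, gives $\mathscr E$ free of rank $d$ over $R$ with $\mathscr E\otimes_R k=E$.
\end{itemize}
We then embed $\mathscr Z=\Spec\mathscr E$ into $\PP^3_R$ lifting $P\hookrightarrow\PP^3_k$. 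After a coordinate change, $P$ lies in the chart $x_0\neq0$, so $P$ is given by $x_i/x_0\mapsto e_i\in E$ and these generate $E$ as a $k$-algebra. Choose lifts $\tilde e_i\in\mathscr E$. They generate $\mathscr E$ by Nakayama, since $\mathscr E$ is finite over the local ring $R$. This defines a closed immersion $\mathscr Z\hookrightarrow\Aff^3_R\subset\PP^3_R$ lifting $P$.

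\emph{Lifting $F$.} Let $F$ be the cubic form defining $S$, and let $M=H^0(\PP^3_R,\OO(3))$, free of rank $20$. Consider the restriction map $\rho\colon M\to\mathscr E$, which is $R$-linear onto a free module of rank $d$. Reduction mod $p$ gives the exact sequences
\[
0\to\ker\rho\to M\to\operatorname{im}\rho\to0
\]
and
\[
0\to H^0(\mathcal I_P(3))\to H^0(\OO(3))\to E.
\]
Since $\operatorname{im}\rho\subset\mathscr E$ is torsion free, it is free, so $\ker\rho$ is a direct summand. The key point, and the step I expect to be the main obstacle, is to show $\ker\rho\otimes k=H^0(\mathcal I_P(3))$, that is, that cubics through $P$ lift to cubics through $\mathscr Z$.

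This holds if $\rho\otimes k$ has the same rank as $\rho\otimes\Frac R$, for instance if both are surjective. Surjectivity over $k$ means $P$ imposes $d$ independent conditions on cubics. This is true when $d\le 4$, or more generally when $P$ is $3$-regular, for example in general position. For the degrees needed ($d\in\{4,5,10\}$, via Coray and Voisin) one can arrange it. In general I would argue via upper semicontinuity: $\dim\ker$ can only drop on the generic fiber, while $\ker\rho$ being saturated gives $\ker\rho\otimes k\subset H^0(\mathcal I_P(3))$. To get equality I would instead lift $F$ directly: $F|_P=0$ means $\rho(\tilde F)\in p\mathscr E\cap\operatorname{im}\rho$. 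If $\operatorname{im}\rho$ is saturated in $\mathscr E$, which holds when $\rho\otimes k$ is surjective, we can correct $\tilde F$ by $p\cdot M$ to get $\rho(\tilde F)=0$. Otherwise I would modify the lifts $\tilde e_i$, which are free up to $p\mathscr E$, to make $\operatorname{im}\rho$ saturated; this is Ma's argument.

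\emph{Conclusion.} Given a lift $\tilde F\in\ker\rho$ reducing to $F$, set $\mathscr S=\{\tilde F=0\}$. It is flat over $R$, and its smooth locus is open and contains the special fiber, so by properness $\mathscr S$ is smooth over $R$. Since $\tilde F$ vanishes on $\mathscr Z$, we have $\mathscr Z\subset\mathscr S$ with special fiber $P$.
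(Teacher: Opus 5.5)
There is a genuine gap, exactly at the step you flag. Because you fix $\mathscr Z\subset\PP^3_R$ before looking at $S$, you then need a lift $\tilde F$ of $F$ with $\rho(\tilde F)=0$. As you correctly observe, this is guaranteed only if $\operatorname{im}\rho$ is saturated in $\mathscr E$, i.e.\ if $\mathscr Z_K$ imposes no more conditions on cubics than $P$ does.

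You verify saturation only when $\rho\otimes k$ is surjective. That is impossible once $d>20=h^0(\OO_{\PP^3}(3))$. It also fails for special configurations: a degree-$10$ point in a $k$-plane $\Pi$ lies on the plane cubic $S\cap\Pi$, so it imposes at most $9$ conditions. For an arbitrary choice of the $\tilde e_i$, the number of conditions can jump up on the generic fiber, and then $F$ need not lie in $\ker\rho\otimes k$. The remark about modifying the $\tilde e_i$ to force saturation is not an argument, and it is not what the cited work of Ma does.

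You also cannot restrict to $d\in\{4,5,10\}$. In Theorem~\ref{thm:smooth} the lemma is applied in characteristic $p$, over a possibly imperfect field, to a point of arbitrary degree prime to $3$. The Coray--Voisin reduction is used only afterwards, over $K=\Frac(R)$.

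The missing idea is to reverse the order: lift the surface first and then lift the point inside it, where smoothness does the work. This is what the paper does.
\begin{itemize}
\item Lift $F$ arbitrarily; your last paragraph correctly shows the resulting $\mathscr S$ is smooth over $R$.
\item By Fogarty's theorem for a smooth projective relative surface, $\operatorname{Hilb}^d(\mathscr S/R)$ is smooth over $R$.
\item Since $R$ is henselian, the $k$-point $[P]$ lifts to an $R$-point, and the universal family gives $\mathscr Z$.
\end{itemize}

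Your own ingredients would also do it without Hilbert schemes. Take your finite free lift $\mathscr E$ of $E=\kappa(P)$.
\begin{itemize}
\item The infinitesimal lifting property of the smooth morphism $\mathscr S\to\Spec R$ lifts $\Spec E\to\mathscr S$ successively to $\Spec(\mathscr E/p^n\mathscr E)\to\mathscr S$. This works even if $E/k$ is inseparable.
\item $p$-adic completeness of $\mathscr E$ then gives $\Spec\mathscr E\to\mathscr S$.
\item Your Nakayama argument (surjectivity modulo $p$ on an affine chart), together with properness of $\Spec\mathscr E$ over $R$, shows this is a closed immersion.
\end{itemize}
Then $\mathscr Z\subset\mathscr S$ holds by construction, and no saturation question arises.
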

\begin{proof}
Choose a Cohen ring $R$ with uniformizer $p$ and residue field exactly
$k$ \cite[Tag~0328]{Stacks}. Lift the coefficients of a defining cubic
of $S$ to obtain $\mathscr S\subset\PP^3_R$. This hypersurface is
flat over $R$ and smooth along its special fiber. Its nonsmooth locus
is closed and proper over $R$. If nonempty, its image would contain
the closed point of $\Spec R$, a contradiction. Thus
$\mathscr S/R$ is smooth.

Since $R$ is a discrete valuation ring and $\mathcal S/R$ is smooth
and projective of relative dimension two, Fogarty's theorem
\cite[Theorem~2.9]{Fogarty} implies that
$\operatorname{Hilb}^d(\mathcal S/R)$ is smooth over $R$, see also \cite[Tag~06D9]{Stacks} and \cite[Lemma~2.5]{ma}.
As $R$ is complete, hence henselian, the $k$-point $[P]$ lifts
to an $R$-point. The corresponding universal family gives the
required finite flat subscheme $\mathcal Z\subset\mathcal S$. 
\end{proof}

We are now ready to prove the case of smooth cubic surfaces.

\begin{theorem}
\label{thm:smooth}
Let $S\subset\PP^3_k$ be a smooth cubic surface over an arbitrary
field. If $S$ has a zero-cycle of degree one, then
$S(k)\ne\varnothing$.
\end{theorem}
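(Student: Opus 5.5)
In characteristic zero the statement is Theorem~\ref{thm:charzero}, so assume $\operatorname{char}k=p>0$. The plan is to lift $S$, together with enough closed points to carry a zero-cycle of degree one, to a complete discrete valuation ring of characteristic zero. We then apply Theorem~\ref{thm:charzero} on the generic fiber and specialize the resulting rational point back to $k$.

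First, write a zero-cycle of degree one as $\sum_j n_j[P_j]$ with closed points $P_j$ of degrees $d_j$ and $\sum_j n_j d_j=1$. Choose a Cohen ring $R$ with residue field $k$, as in the proof of Lemma~\ref{lem:lift}, and fix one lift $\mathscr S\subset\PP^3_R$ of the cubic equation. This $\mathscr S$ is smooth over $R$ by the argument given there. Apply the Hilbert-scheme part of that argument to each $P_j$ separately, but with this fixed $\mathscr S$: $\operatorname{Hilb}^{d_j}(\mathscr S/R)$ is smooth over $R$ by Fogarty, and $R$ is henselian, so each $[P_j]$ lifts to a finite flat subscheme $\mathscr Z_j\subset\mathscr S$ of degree $d_j$ over $R$. (Lemma~\ref{lem:lift} as stated produces its own $\mathscr S$ for each point; the proof shows that the lift of the surface can be fixed first.)

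Let $K=\Frac R$, which has characteristic zero, and let $S_K$ be the generic fiber, a smooth cubic surface over $K$. The generic fiber $(\mathscr Z_j)_K$ is a finite $K$-scheme of degree $d_j$, so its associated zero-cycle has degree $d_j$. Hence $\sum_j n_j[(\mathscr Z_j)_K]$ is a zero-cycle of degree one on $S_K$. Theorem~\ref{thm:charzero} then gives a point $x\in S_K(K)$. Since $\mathscr S\to\Spec R$ is proper, the valuative criterion extends $x$ to an $R$-point of $\mathscr S$; concretely, we scale the coordinates so they lie in $R$ with one of them a unit. Reducing modulo the maximal ideal gives a point of $S(k)$.

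The main obstacle is just the bookkeeping in the second step: all the points must be lifted into a single common lift $\mathscr S$, and the degree of each zero-cycle must be preserved on the generic fiber. The latter is automatic because $\mathscr Z_j$ is finite flat of constant degree $d_j$, so the generic fiber has $K$-length $d_j$ even if it is not a single closed point. Imperfectness of $k$ causes no trouble, because Cohen rings exist for arbitrary residue fields and Theorem~\ref{thm:charzero} is applied only over $K$, which has characteristic zero.
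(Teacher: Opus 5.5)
Your proof is correct and follows essentially the same route as the paper: lift to a Cohen ring, lift the points via smoothness of $\operatorname{Hilb}^d(\mathscr S/R)$ and Hensel's lemma, apply Theorem~\ref{thm:charzero} on the generic fiber, and specialize by properness. The only difference is bookkeeping. The paper lifts a single closed point of degree $d$ prime to $3$ and combines its fundamental cycle with the degree-three class of a line section on $\mathscr S_K$. This lets it invoke Lemma~\ref{lem:lift} verbatim, rather than lifting all the $P_j$ into one common $\mathscr S$ as you (correctly) do.
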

\begin{proof}
By Theorem~\ref{thm:charzero}, only positive characteristic remains.
Choose a closed point $P$ of degree $d$ prime to three from the support
of a zero-cycle of degree one. Apply Lemma~\ref{lem:lift}, and put
$K=\Frac(R)$. The finite scheme $\mathscr Z_K$ has degree $d$,
so its fundamental cycle, together with the degree-three
intersection class with a line, gives a zero-cycle of degree one on $\mathscr S_K$.

Theorem~\ref{thm:charzero} gives a $K$-point of $\mathscr S_K$.
Properness extends it to an $R$-point of $\mathscr S$, whose reduction
is a $k$-point of $S$.
\end{proof}

\section{Removing the smoothness hypothesis}\label{sec:arbitrary}

In this section we finish the proof of the following theorem, which is equivalent to Theorem \ref{thm:main:intro}.

\begin{theorem}\label{thm:main}
Let $X=\{F=0\}\subset\PP^3_k$ be a cubic surface over a field $k$.
Then $X$ has a $k$-rational point if and only if it has a closed point of degree coprime to $3$, or, equivalently, a zero-cycle of degree one.
\end{theorem}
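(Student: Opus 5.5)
The nontrivial direction is: a zero-cycle of degree one gives a rational point. The converse is immediate. The equivalence between a closed point of degree prime to $3$ and a zero-cycle of degree one also needs an argument. Given a point of degree $d$ prime to $3$, combine it with the degree-$3$ cycle cut out by a $k$-line to get degree one. Conversely, a degree-one cycle has some point in its support of degree prime to $3$. The smooth case is Theorem~\ref{thm:smooth}, so I assume $X$ is singular or nonreduced/reducible and argue by cases on the structure of $F$.

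\emph{Degenerate case: $F$ reducible over $k$.} Then $F$ has a linear factor over $k$, so $X$ contains a $k$-plane and has a rational point.

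\emph{$F$ irreducible over $k$ but not geometrically irreducible.} Then $X_{\bar k}$ is a union of three Galois-conjugate planes. The planes, or their common line or point, are Galois stable, so the intersection of all three gives a $k$-linear subspace of $X$. This subspace is nonempty, since three planes in $\PP^3$ meet. Hence $X(k)\neq\varnothing$.

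\emph{Cones.} The same works for $X$ a cone over a plane cubic curve $C$. If the vertex is a single point, it is Galois invariant and hence rational. (If $X$ is a cone whose vertex is a line, $X$ is a union of planes, which the previous cases cover.)

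\emph{Main case: $X$ geometrically integral, singular, not a cone.} Here I would follow the strategy announced in the introduction: reduce to smooth cubic surfaces over $K=k((t))$. Choose a cubic form $G$ over $k$ such that $F+tG$ defines a smooth cubic surface $\mathscr X_K$ over $K$; a generic $G$ works, since smoothness of the generic fiber is an open condition satisfied by $F+G'$ with $G'$ a general member. Let $P$ be a closed point of $X$ of degree $d$ prime to $3$. I need to lift $P$ to a finite flat subscheme of degree $d$ of $\mathscr X=\{F+tG=0\}\subset\PP^3_{k[[t]]}$; the generic fiber then gives a zero-cycle of degree one on the smooth surface $\mathscr X_K$. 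Theorem~\ref{thm:smooth} then gives a $K$-point. Properness extends it to a $k[[t]]$-point, and its reduction is a $k$-point of $X$.

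The obstacle is the lifting. If $P$ lies in the smooth locus of $X$, Hensel's lemma on the smooth locus of $\mathscr X$ works. Concretely, pick $G$ vanishing at $P$ to second order is unnecessary: the smooth locus of $\mathscr X\to\Spec k[[t]]$ contains $P$, so it has a $W$-point, where $W$ is the unramified (or, for inseparable $\kappa(P)$, Cohen-type) complete DVR with residue field $\kappa(P)$ over $k[[t]]$. In the inseparable case I would instead choose $G$ vanishing on $P$, so that $P\times\Spec k[[t]]\subset\mathscr X$ is a constant lift; this imposes at most $d$ conditions on the $20$-dimensional space of cubics. Taking $G\in H^0(\mathcal I_P(3))$ general, I then need smoothness of $F+tG$ over $K$, which holds if the linear system $\langle F,H^0(\mathcal I_P(3))\rangle$ has a smooth member. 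So the hardest step is showing that cubics through $P$ have a smooth member; Bertini fails in small characteristic and when $d$ is large.

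If $P$ lies in the singular locus, I would instead exploit the singularity directly. The singular locus of a geometrically integral non-cone cubic surface is finite with at most four points, or is a line; the line is then defined over $k$. If it is finite, the singular points form a Galois-stable set of size $\le 4$. One then applies Lemma~\ref{lem:secant} and projection from a singular point: a rational singular point suffices, and a degree-$2$ point gives a point via Lemma~\ref{lem:secant}. Degrees $3$ and $4$ need separate treatment. For degree $4$ the plane spanned by three of the points contains the structure, which is analogous to Lemma~\ref{lem:position}; for degree $3$ the three singular points span a $k$-plane meeting $X$ in a triangle of lines, whose Galois-invariant structure I would examine. This lets me assume $P$ is a smooth point, and the lifting above applies.
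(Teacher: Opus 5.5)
Your main case follows the paper: degenerate $F$ to a smooth cubic over $k((t))$, lift a point by Hensel, apply Theorem~\ref{thm:smooth}, and specialize by properness. The cases surrounding it, however, have genuine gaps.

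First, ``Galois invariant, hence rational'' fails over imperfect fields. In your second and third cases you never use the hypothesis, so you are claiming a rational point unconditionally, and that is false. Over $k=\mathbb{F}_3(a,b,c)$, the form $F=x_0^3+ax_1^3+bx_2^3+cx_3^3$ is irreducible over $k$ and geometrically a triple plane (a cone whose vertex is a plane). Yet $X(k)=\varnothing$, because $1,a,b,c$ are linearly independent over $k^3$. In characteristic $2$, vertices of cones also need not be rational. What is needed is to start from the given $L$-point $p$ with $3\nmid[L:k]$. Suppose $p$ has multiplicity three. If $\operatorname{char}k\neq3$, then $D_vF=0$, so the $k$-linear map $w\mapsto D_wF$ has a nonzero kernel vector $w\in k^4$, and Euler's identity gives $3F(w)=0$. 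If $\operatorname{char}k=3$, the condition $3\nmid[L:k]$ forces $L/k$ to be separable, and the trace of a suitably normalized vertex vector is a nonzero $k$-rational vertex.

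Second, your treatment of a point of degree prime to $3$ in the singular locus is not yet an argument. Orbits of singular points of size $3$ and $4$ are left to be ``examined''; nothing is shown, for example, for four conjugate nodes not lying in a $k$-plane. The claim that a singular line is defined over $k$ also fails over imperfect fields: over $\mathbb F_2(a,b)$, the geometrically integral non-cone cubic $x_0^2x_2+ax_2^3+x_1^2x_3+bx_3^3$ is singular exactly along $x_0=\sqrt a\,x_2$, $x_1=\sqrt b\,x_3$. The missing idea is that no analysis of the singular locus is needed. If $p\in X(L)$ has multiplicity two, write $F=x_0Q+C$ in $L$-coordinates with $p=[1:0:0:0]$ and pick $a\in L^3$ with $Q(a)\neq0$. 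Then $[-C(a)/Q(a):a_1:a_2:a_3]$ is a smooth $L$-point of $X_L$, over the same field $L$.

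Third, your worry about inseparable $\kappa(P)$ is unfounded, and the proposed substitute (finding a smooth member of $\langle F,H^0(\mathcal I_P(3))\rangle$) is left unproved. For any finite $L/k$, Hensel's lemma lifts a smooth $L$-point of the special fiber to an $L[[t]]$-point of $\mathscr X$. Moreover $[L((t)):k((t))]=[L:k]$, since a $k$-basis of $L$ is a $k((t))$-basis of $L((t))$. So the pushforward has degree $d$ regardless of separability.
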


For singular cubic surfaces over perfect fields, the conclusion of
Theorem~\ref{thm:main} is due to Coray
\cite[Proposition~8.2]{coray}; see also
\cite[Corollary~1]{CoraySingular} for a second proof. We give a
reduction to the smooth case over a Laurent-series field that applies
over arbitrary fields.  

Let $X=\{F=0\}\subset\PP^3_k$ for a nonzero homogeneous cubic $F$,
with no assumption on $k$ or on the singularities of $X$. If $F$ is
reducible over $k$, it has a linear factor $\ell$, so $X$ contains
the $k$-plane $\{\ell=0\}$ and $X(k)\ne\varnothing$. Write
$X_{\mathrm{sm}}$ for the smooth locus over $k$. We first handle the
possibility that a zero-cycle of degree one is supported entirely in
the singular locus.

\begin{lemma}\label{lem:singular-point}
Let $L/k$ be a finite extension with $3\nmid[L:k]$. If
$X(L)\ne\varnothing$, then either $X(k)\ne\varnothing$ or
$X_{\mathrm{sm}}(L)\ne\varnothing$.
\end{lemma}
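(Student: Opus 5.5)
The plan is to read off the singular $L$-point geometrically and reduce to a small list of configurations, each of which yields a $k$-point by a secant/line argument as in Lemma~\ref{lem:secant} and Proposition~\ref{prop:zero-off-diagonal}. We may assume $F$ is irreducible over $k$, since otherwise $X(k)\neq\varnothing$ as explained above. Let $x\in X(L)$ and suppose $x\notin X_{\mathrm{sm}}(L)$. Then the image of $x$ is a closed point $P$ of $X$ with $[\kappa(P):k]\mid [L:k]$, hence of degree $e$ coprime to $3$, and $P$ lies in the closed subscheme $\operatorname{Sing}(X)$, which is cut out by $F$ and its partial derivatives and is defined over $k$. If $e=1$ we are done, so assume $e\ge 2$. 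I would then split into cases according to whether $\operatorname{Sing}(X)$ is finite or not.

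\emph{Case 1: $\operatorname{Sing}(X)$ is one-dimensional.} First I would treat separately the degenerate situation in which all partial derivatives of $F$ vanish identically (see the obstacle below). Otherwise, since $F$ is irreducible of degree $3$, a one-dimensional singular locus forces $X_{\bar k}$ to be singular along a line. This happens either because $X$ is non-normal, or because $X_{\bar k}$ is a cone over a singular plane cubic, or because $X_{\bar k}$ is reducible with conjugate components. In each subcase the reduced one-dimensional part of the singular locus is a single line, stable under all automorphisms of $\bar k/k$. This line is therefore defined over $k$ (or over a purely inseparable extension), and it contains a $k$-point; in the cone case the vertex gives one directly. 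For the geometrically reducible case, $F$ is either a product of three conjugate planes or a plane times a quadric. The plane-times-quadric case is impossible for irreducible $F$ when the plane is Galois stable. For three conjugate planes, their common point, or their common line, is defined over $k$ and lies on $X$.

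\emph{Case 2: $\operatorname{Sing}(X)$ is finite.} A normal cubic surface that is not a cone has at most four geometric singular points, so the Galois orbit of $P$ has size $2$ or $4$. If the size is $2$, the line through the two conjugate singular points meets $X$ with multiplicity at least $4$. It is therefore contained in $X$, it is defined over $k$, and it supplies a $k$-point. If the size is $4$, the four singular points are not coplanar (otherwise the plane would meet $X$ in a cubic curve with four double points, which is impossible for irreducible $F$). Then all six lines $L_{ij}$ joining them lie in $X$, as in Proposition~\ref{prop:zero-off-diagonal}. The Galois image in $S_4$ is transitive of order prime to $3$, hence contained in a dihedral $D_4$. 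Such a group fixes one of the three pairings $\{\{0,1\},\{2,3\}\}$. The two skew lines of that pairing are then either both defined over $k$ or swapped over a quadratic extension, and in either case the argument of Proposition~\ref{prop:zero-off-diagonal} ($n=1$) produces a $k$-point. The remaining possibility, a cone with isolated vertex, gives the vertex as the unique singular point, which is a $k$-point.

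The main obstacle I expect is the imperfect case in characteristic $3$, where $X$ may be geometrically non-reduced. For instance, $F=\sum a_ix_i^3$ with all partial derivatives zero, so that $X_{\mathrm{sm}}=\varnothing$. There the statement reduces to showing that if $a_0,\dots,a_3$ become $L^3$-linearly dependent, they are already $k^3$-linearly dependent when $3\nmid[L:k]$. I would handle this by base-changing to $L$ and applying a norm/degree argument to the purely inseparable extension $k(a_i^{1/3})/k$. The key fact is that this extension has degree a power of $3$, so it stays linearly disjoint from $L$, and this gives the required descent of the dependence relation. More generally, over imperfect fields one must also check that non-smooth but regular points behave like the geometric singular points used above, and I would verify this in each case by passing to $\bar k$ and using Galois (or automorphism) invariance of the relevant line or point.
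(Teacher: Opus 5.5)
\textbf{There is a genuine gap, and it sits exactly where this lemma is needed: over imperfect fields.} For perfect $k$, the singular case is already due to Coray.

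At every step you turn invariance under $\operatorname{Aut}(\bar k/k)$ into $k$-rationality: ``the Galois orbit of $P$ has size $2$ or $4$'', ``this line is defined over $k$ (or over a purely inseparable extension), and it contains a $k$-point'', ``the vertex gives one directly''. Over an imperfect field, invariance only gives definability over the perfect closure. Two things then go wrong.
\begin{enumerate}
\item A singular closed point of degree $e\ge 2$ prime to $3$ can be purely inseparable. It then has a single geometric point, and neither orbit size $2$ nor $4$ occurs. For example, over $k=\mathbb F_2(a)$ the cubic $x_0x_1^2+ax_0^3+x_0x_2x_3+x_2^3+x_3^3$ has exactly four geometric singular points, so it is normal and geometrically irreducible. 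One of these points is $[1:\sqrt a:0:0]$.
\item A line defined over a purely inseparable extension need not contain any $k$-point. For example, $(x_1^2+ax_0^2)x_2+(x_3^2+ax_2^2)x_0$ is geometrically irreducible. It is singular exactly along the line through $[1:\sqrt a:0:0]$ and $[0:0:1:\sqrt a]$, and that line has no $k$-point.
\end{enumerate}
Both surfaces do contain $k$-points, e.g.\ $[0:1:0:0]$, but your arguments do not produce them. Your closing plan, to treat imperfect fields ``by passing to $\bar k$ and using Galois (or automorphism) invariance'', is precisely the step that fails.

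Some parts are fine. Your linear-disjointness argument for $\sum a_ix_i^3$ in characteristic $3$ works. Over perfect fields your route should also work, although it rests on classification facts that you would still have to justify in characteristics $2$ and $3$: at most four singular points on a normal non-conical cubic, and non-normal cubics being singular along a single line.

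\textbf{The missing idea is that you are proving more than the lemma asks.} You try to get a $k$-point from every singular $L$-point. At a point $p$ of multiplicity two, the lemma only needs a smooth $L$-point, and that is elementary. Put $p=[1:0:0:0]$ and write $F=x_0Q+C$ over $L$. Pick $a\in L^3$ with $Q(a)\neq 0$. The residual point $[-C(a)/Q(a):a_1:a_2:a_3]$ of the line through $p$ in direction $a$ satisfies $\partial F/\partial x_0=Q(a)\neq 0$ there, so it is a smooth $L$-point.

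A $k$-point is needed only at a point of multiplicity three, and there the paper uses linear algebra over $k$ instead of descent.
\begin{itemize}
\item In characteristic $\neq 3$: the $k$-linear map $w\mapsto D_wF$ has nonzero kernel, because it does after $\otimes_kL$. For $w$ in the kernel, Euler's identity gives $3F(w)=0$.
\item In characteristic $3$: the hypothesis $3\nmid[L:k]$ forces $L/k$ to be separable. After normalizing the vertex vector $v$, the trace $\Tr_{L/k}(v)$ is a nonzero $k$-vector along which $F$ is translation invariant. Hence it is a zero of $F$.
\end{itemize}
This argument is uniform in $k$ and needs no classification of singular cubic surfaces.
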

\begin{proof}
Choose $p\in X(L)$. Its multiplicity on the hypersurface $X_L$ is
one, two, or three. Multiplicity one means that $p$ is smooth. 
If $p$ has multiplicity two, we will construct a smooth $L$-rational point;
if it has degree $3$ we will show that $X$ has a $k$-rational point.

Suppose the multiplicity is two. In coordinates over $L$ with
$p=[1:0:0:0]$, the equation is
\[
 F=x_0Q(x_1,x_2,x_3)+C(x_1,x_2,x_3),
 \qquad Q\ne0,
\]
where $Q$ and $C$ have degrees two and three, respectively. There exists
$a\in L^3$ with $Q(a)\ne0$, even over a finite field, because the values
$Q(e_i)$ and $Q(e_i+e_j)-Q(e_i)-Q(e_j)$ recover all coefficients of
$Q$.  
Then the point
\[
 q=\left[-\frac{C(a)}{Q(a)}:a_1:a_2:a_3\right]
\]
lies on $X_L$ and satisfies $\partial F/\partial x_0(q)=Q(a)\ne0$.
It is therefore a smooth point of $X_L$,  and so $X_{\rm sm}(L)\neq \emptyset$. 

Suppose now that $p$ has multiplicity three. 
In coordinates with $p=[1:0:0:0]$, the equation $F$ 
that cuts out $X$ is then independent of $x_0$. In the original
coordinates, a nonzero vector $v\in L^4$ representing $p$ consequently
satisfies
\begin{equation}\label{eq:translation}
 F(x+s v)=F(x)
 \quad\text{in }L[x_0,x_1,x_2,x_3,s].
\end{equation}
If $\operatorname{char}k\ne3$, consider the $k$-linear map
\[
 D_F \colon k^4\longrightarrow k[x_0,x_1,x_2,x_3]_2,
 \qquad w\longmapsto D_wF=\sum_{i=0}^3w_i\frac{\partial F}{\partial x_i}.
\]
By differentiating \eqref{eq:translation} with respect to $s$,
we obtain $D_vF=0$. Thus $D_F\otimes_k L$ has nonzero kernel,
and hence so does $D_F$.
We may thus pick a nonzero vector $w\in k^4$ in the kernel of 
the linear map $D_F$. Euler's identity gives
$0=(D_wF)(w)=3F(w)$, so $[w]\in X(k)$.

If $\operatorname{char}k=3$, the hypothesis on $[L:k]$ implies that
$L/k$ is separable. Normalize $v$ so that 
its $i_0$-th coordinate is $1$ for some fixed $i_0$. 
Every conjugate $\sigma(v)$ satisfies~\eqref{eq:translation}, and
directions satisfying this identity are closed under addition, since
translations compose. Thus
\[
 w\coloneq \Tr_{L/k}(v)=\sum_{\sigma:L\hookrightarrow\bk}\sigma(v)\in k^4
\]
also satisfies~\eqref{eq:translation}. 
By our normalization, the $i_0$-th coordinate of $w$ is
$[L:k]\ne0$ in $k$, so $w\ne0$.
Setting $x=0$ and $s=1$ in \eqref{eq:translation} then gives
$F(w)=0$. This concludes the argument. 
\end{proof}

\begin{proposition}
\label{prop:smoothing}
Suppose every smooth cubic surface over $k((t))$ with a zero-cycle of
degree one has a $k((t))$-rational point. Then every cubic surface over
$k$ with a zero-cycle of degree one has a $k$-rational point.
\end{proposition}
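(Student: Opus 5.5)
The plan is to deform $X$ to a smooth cubic surface over $k((t))$ while keeping a zero-cycle of degree one, apply the hypothesis there, and specialize the resulting point back to $k$.

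\emph{Reductions.} Let $X=\{F=0\}$ have a zero-cycle of degree one. If $F$ is reducible over $k$, then $X(k)\ne\varnothing$ as noted above, so assume $F$ is irreducible. Some closed point $p\in X$ has degree $d=[\kappa(p):k]$ coprime to $3$. Set $L=\kappa(p)$, so that $X(L)\ne\varnothing$. By Lemma~\ref{lem:singular-point}, either $X(k)\ne\varnothing$ (and we are done) or $X_{\mathrm{sm}}(L)\ne\varnothing$. In the latter case we obtain a closed point $p'\in X_{\mathrm{sm}}$, the image of a smooth $L$-point, whose degree divides $[L:k]$ and is therefore coprime to $3$. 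The smooth locus is open, and a smooth closed point of $X$ is a local complete intersection point where $X$ is regular.

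\emph{Deformation.} Choose a cubic form $G$ over $k$ vanishing at $p'$ such that $F_t=F+tG$ defines a smooth surface over $k((t))$. For existence: the cubics vanishing at $p'$ form a linear system $\Lambda$ of codimension at most $\deg p'$ in the space of cubics. We want a member $G\in\Lambda$ for which the pencil $F+tG$ has smooth generic member. Over $\bar k$, the system $\Lambda$ spanned together with $F$ is base point free away from the finitely many geometric points of $p'$. Bertini-type reasoning then gives smoothness away from the base points. At a base point $q$, smoothness of $F$ at $q$ (from $p'\in X_{\mathrm{sm}}$) gives smoothness of $F+tG$ near $q$ generically.

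Positive characteristic and imperfect $k$ make Bertini delicate. To handle this, I would argue directly that the locus in $\Lambda$ of $G$ with $\{F+tG=0\}$ singular over $\overline{k((t))}$ is a proper closed condition. Concretely, the generic point of the pencil is smooth iff the discriminant $\mathrm{disc}(F+tG)\in k[t]$ is a nonzero polynomial. It suffices to find one $G\in\Lambda\otimes\bar k$ with $\mathrm{disc}(F+tG)\not\equiv0$, and then a $k$-point of $\Lambda$ with this property exists, at least when $k$ is infinite.

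For such a $G$, take $G=F'$ where $F'$ is a smooth cubic through the points of $p'$, with the same tangent planes as $X$ there so that the geometry at the base points is unobstructed. Then the value at $t=\infty$, namely $G$ itself, is smooth, so the discriminant is nonzero. Smooth cubics through $4$ or fewer points in general enough position exist over $\bar k$. A cleaner alternative is to apply Bertini to $|\mathcal I_{p'}(3)|$: this system is very ample off $p'$ when $\deg p'\le$ a few, and its general member is smooth at the points of $p'$.

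\emph{Conclusion.} The smooth cubic surface $\{F_t=0\}$ over $k((t))$ contains $p'\otimes_k k((t))$, which is a zero-cycle of degree coprime to $3$. Combined with a plane section of degree $3$, this gives a zero-cycle of degree one. The hypothesis then yields a $k((t))$-point. Scaling its coordinates into $k[[t]]$ with some coordinate a unit and reducing modulo $t$ produces a $k$-point of $X$.

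\emph{Obstacles.} The main obstacle is guaranteeing a $k$-rational $G$, vanishing on $p'$, with smooth generic fibre. This is hard when $k$ is finite and the degree of $p'$ is large, and arbitrary degree coprime to $3$ is allowed. For finite $k$, I would sidestep the problem by replacing $k$ with $k((s))$ or a large extension of degree coprime to $3$ and using Chevalley--Warning, which already gives $X(k)\ne\varnothing$ for cubics in four variables over finite fields. So one may assume $k$ is infinite, and then the existence of $G$ reduces to a nonempty Zariski open subset of an affine space over $k$ having a $k$-point.
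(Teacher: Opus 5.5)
Your outer structure matches the paper, and the reduction to a smooth point via Lemma~\ref{lem:singular-point} and the final specialization step are fine. The outer structure is: deform to a smooth cubic over $k((t))$, produce a zero-cycle of degree one there, apply the hypothesis, and specialize by properness.

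\textbf{The gap is in the deformation step.} You insist that $G$ vanish along $p'$, so that $p'\otimes_k k((t))$ lies on the deformed surface. This cannot be arranged in general. The degree of $p'$ is only known to be prime to $3$ and may be arbitrarily large, and nothing in the argument lets you lower it; that is the content of the conjecture. Cubics modulo $F$ form the $19$-dimensional space $H^0(X,\mathcal O_X(3))$. Nothing prevents the geometric points of $p'$ from imposing $19$ independent conditions on it, for instance $19$ or more conjugate points in sufficiently general position on $X$. In that case $\Lambda=k\cdot F$, so every $F+tG$ with $G\in\Lambda$ is a scalar multiple of $F$. It is then singular whenever $X$ is, and $\operatorname{disc}(F+tG)\equiv 0$. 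Your claim that $\Lambda$ is base point free off $p'$ also fails, since its base locus is all of $X$. Passing to infinite $k$ does not help, because the open set in which you want a $k$-point is empty. Your proposed witnesses only exist when $\deg p'$ is small: a smooth cubic through $p'$ with prescribed tangent planes, or Bertini for $|\mathcal I_{p'}(3)|$.

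\textbf{The missing idea:} the family does not need to contain $p'$ on the nose, because smoothness of the point lets you \emph{lift} it.
\begin{enumerate}
\item Take any smooth cubic $G$ over $k$, such as the Fermat cubic if $\operatorname{char}k\ne 3$ or an explicit smooth cubic in characteristic $3$. The pencil $(1-t)F+tG$ then has smooth generic fiber $X_K$ over $K=k((t))$, with no Bertini argument needed.
\item The point $q\in X_{\mathrm{sm}}(L)$ is a smooth point of the special fiber of $\mathscr X=\{(1-t)F+tG=0\}\subset\PP^3_{k[[t]]}$. Hensel's lemma therefore lifts it to an $L[[t]]$-point of $\mathscr X$.
\item Its generic point is an $L((t))$-point of $X_K$, and $[L((t)):k((t))]=[L:k]$ is prime to $3$. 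Pushing forward and combining with a plane section gives a zero-cycle of degree one on $X_K$.
\end{enumerate}
This Hensel lifting is exactly where the paper uses the smoothness supplied by Lemma~\ref{lem:singular-point}. With this change, your Chevalley--Warning detour for finite $k$ becomes unnecessary, though it is harmless.
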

\begin{proof}
Let $Y=\{F=0\}$ be a cubic surface over $k$ with a zero-cycle of degree one.
Choose a closed point of degree $d$ prime to three from the support of
the given zero-cycle, and let $L/k$ be its residue field extension.
By Lemma~\ref{lem:singular-point}, we may assume that there is a point
$q\in Y_{\mathrm{sm}}(L)$.

Choose a homogeneous cubic $G$ defining a smooth surface over $k$.
For example, one may take
\[
 G\coloneq
 \begin{cases}
 x_0^3+x_1^3+x_2^3+x_3^3,
     &\operatorname{char}k\ne3,\\
 x_0^3+x_0x_1^2+x_1x_2^2+x_2x_3^2,
     &\operatorname{char}k=3.
 \end{cases}
\]

Put $R=k[[t]]$, $K=k((t))$, and
\[
 \mathscr X\coloneq
 \{(1-t)F+tG=0\}\subset\PP^3_R.
\]
This is a flat projective family with special fiber $Y$ and smooth
generic fiber $X\coloneq\mathscr X_K$. Indeed, the pencil over
$\Aff^1_k$ contains the smooth member $\{G=0\}$ at $t=1$, so its
generic fiber is smooth.

The point $q$ lifts to a point
$\tilde q\in\mathscr X(L[[t]])$ by Hensel's lemma: on an affine chart
containing $q$, choose a coordinate for which the partial derivative
of the defining equation is nonzero at $q$. Fixing the other
coordinates at their values at $q$, Hensel's lemma lifts the remaining
coordinate to a power series in $t$ satisfying the equation of
$\mathscr X$; see also \cite[Lemma~2.4]{ma}.

Put $M=L((t))$. Restricting $\tilde q$ to the generic point of
$\operatorname{Spec}L[[t]]$ gives a point $\tilde q_\eta\in X(M)$.
A $k$-basis of $L$ is also a $K$-basis of $M$, so $[M:K]=d$.
Let
\[
 \pi\colon X_M=X\times_K M\longrightarrow X
\]
be the projection, and regard $\tilde q_\eta$ as an $M$-rational
point of $X_M$. If $P$ denotes its image in $X$, then
\[
 \zeta\coloneq\pi_*[\tilde q_\eta]
   =[M:\kappa(P)]\cdot[P]\in\CH_0(X),
 \qquad \deg_K\zeta=d.
\]
Since $\gcd(d,3)=1$, an integral linear combination of $\zeta$ and
the degree-three intersection class with a line gives a zero-cycle
of degree one on $X$.

Since $X$ is smooth and has a zero-cycle of degree one, the hypothesis
of the proposition implies that $X(K)\ne\varnothing$.
By the valuative criterion of properness, any point of $X(K)$ extends
to an $R$-point of $\mathscr X$, whose reduction belongs to $Y(k)$.
\end{proof}

\begin{proof}[Proof of Theorem~\ref{thm:main}]
Apply Proposition~\ref{prop:smoothing}, using Theorem~\ref{thm:smooth}
over $k((t))$.
\end{proof}

\section*{AI disclosure} 
The starting points for this paper were two different 
GPT-assisted proofs of the Cassels--Swinnerton-Dyer conjecture for cubic surfaces, obtained independently by the authors. The paper is the synthesis of these arguments, with GPT used to help with the copyediting.
The authors’ interactions with GPT and their respective 
contributions were as follows. 

VA asked GPT-5.5 to find a 
counterexample to the conjecture,  
suggesting many ideas for it to try,
including the universal family
of four-pointed cubic surfaces as a candidate. 
GPT-5.5 made no progress, but when VA put the same question to 
GPT-6 Astra, it produced a rational section of this family. 
While the initial argument seemed convoluted, further exchanges 
with GPT led to the key idea presented 
in Section \ref{sec:degree-five}.
At VA’s request, GPT then developed extensions to arbitrary 
fields and arbitrary cubic forms, leading to the arguments 
presented in Sections \ref{sec:specialization} and \ref{sec:arbitrary}.  

Having set aside the search for counterexamples some time 
earlier, SS explored various approaches to constructing a 
rational point from a point of degree $4$ in conversations with 
GPT. After exploring one such approach, involving quadrics through effective 
zero-cycles of degree $8$ (cf.~Section \ref{subsec:3-quadrics-criterion}), 
GPT pointed out that the dimension 
of the space of quadrics through a nonreduced subscheme of 
length $8$ supported at four general geometric points can jump for 
special choices of the tangent directions, 
cf.~Section \ref{sec:quadrics}. It suggested 
that this observation offered a promising approach, 
while noting that the details still needed to be worked 
out and checked. SS developed and checked these details 
with GPT’s assistance, leading to the results presented 
in Sections \ref{sec:degree-four} and \ref{sec:specialization}.

\section*{Acknowledgments} 
SS first learned about this problem from Johan de Jong in 2020 in connection with the work of his student Qixiao Ma, who subsequently held a postdoctoral position in SS's group from 2021 to 2023.  
VA learned about the problem from a talk given by Claire Voisin at a conference in Princeton in June of 2026.

We thank Jean-Louis Colliot-Th\'el\`ene, Philip Engel, Qixiao Ma, Matthias Paulsen, Alexei Skorobogatov and Claire Voisin for discussions and comments. 

VA was partially supported by NSF grant DMS-2501855. VA's free subscription to GPT was provided by the OpenAI for Academic Researchers program. 

This project has also received funding from the European Research Council (ERC) under the European Union’s Horizon 2020 research and innovation programme under grant agreement No 948066 (ERC-StG RationAlgic).
The research was partly conducted in the framework of the DFG-funded research training group RTG 2965: From Geometry to Numbers, Project number 512730679.

\end{document}